\newtheorem{theorem}{Theorem}[section]
\newtheorem*{theorem*}{Theorem}
\newtheorem{lemma}[theorem]{Lemma}
\newtheorem{proposition}[theorem]{Proposition}
\newtheorem*{proposition*}{Proposition}
\newtheorem*{conjecture*}{Conjecture}
\theoremstyle{definition}
\newtheorem{definition}[theorem]{Definition}
\newtheorem{remark}[theorem]{Remark}
\numberwithin{equation}{section}
\def\bN {\mathbb{N}}
\def\bR {\mathbb{R}}
\def\bZ {\mathbb{Z}}
\def\cE {\mathcal{E}}
\def\cH {\mathcal{H}}
\def\cZ {\mathcal{Z}}
\def\scrL{\mathscr{L}}
\def\la {\langle}
\def\ra {\rangle}
\newcommand{\tx}[1]{\mathrm{#1}}
\newcommand{\wt}[1]{\widetilde{#1}}
\newcommand{\bs}[1]{\boldsymbol{#1}}
\newcommand{\conj}[1]{\overline{#1}}
\newcommand{\sh}[1]{#1^\sharp}
\renewcommand{\ker}{\operatorname{ker}}
\newcommand{\eee}{\mathrm e}
\newcommand{\ssubset}{\subset\joinrel\subset}
\newcommand{\ud}{\mathrm{\,d}}
\newcommand{\vd}{\mathrm{d}}
\newcommand{\vD}{\mathrm{D}}
\newcommand{\dd}[1]{{\frac{\vd}{\vd{#1}}}}
\title{Kink networks for scalar fields in dimension $1+1$}
\author{Gong Chen}
\address{Department of Mathematics, University of Toronto, 40 St George St, Toronto, Ontario, Canada}
\email{gc@math.toronto.edu}
\author{Jacek Jendrej}
\address{CNRS and Universit\'e Sorbonne Paris Nord, LAGA, UMR 7539, 99 av J.-B.~Cl\'ement, 93430 Villetaneuse, France }
\email{jendrej@math.univ-paris13.fr}
\begin{document}

\begin{abstract}
We consider a scalar field equation in dimension $1+1$ with a positive external potential having
non-degenerate isolated zeros.
We construct weakly interacting pure multi-solitons, that is solutions converging exponentially in time to a superposition
of Lorentz-transformed kinks, in the case of distinct velocities.
We find that these solutions form a $2K$-dimensional smooth manifold in the space of solutions,
where $K$ is the number of the kinks.
We prove that this manifold is invariant under the transformations corresponding to the invariances of the equation,
that is space-time translations and Lorentz boosts.
\end{abstract}

\maketitle
\section{Introduction}
\label{sec:intro}
\subsection{Setting of the problem}
\label{ssec:setting}
We study scalar fields in dimension $1+1$.
Let $W: \bR \to [0, +\infty)$ be a function of class $C^\infty$ and consider the Lagrangian action
\begin{equation}
\label{eq:lagrange}
\scrL(\phi) := \iint \Big(\frac 12(\partial_t \phi)^2 - \frac 12 (\partial_x\phi)^2 - W(\phi)\Big)\, \ud  x \ud t,
\end{equation}  
for real-valued functions $\phi = \phi(t, x)$. The Euler-Lagrange equation associated with $\scrL$ is the nonlinear wave equation 
\begin{equation}
\label{eq:csf}
\partial_t^2 \phi(t, x) - \partial_x^2 \phi(t, x) + W'(\phi(t, x)) = 0, \qquad (t, x) \in \bR\times \bR,\ \phi(t, x) \in \bR.
\end{equation}
We study~\eqref{eq:csf} for potentials $W$ satisfying the following conditions:
\begin{enumerate}
\item[(A1)] $W(\phi) \geq 0$ for all $\phi \in \bR$,
\item[(A2)] $\Omega := W^{-1}(0) \subset \bR$ has no accumulation points,
\item[(A3)] $W''(\omega) > 0$ for all $\omega \in \Omega$.
\item[(A4)] $\int_0^\infty \sqrt{W(\phi)}\ud\phi = \int_{-\infty}^0\sqrt{W(\phi)}\ud\phi = \infty$.
\end{enumerate}
Any $\omega \in \Omega$ is called a \emph{vacuum}. We sort them in the increasing order,
so that $\Omega = \{\omega_n\}_{n \in I}$, $I \subset \bZ$, $\omega_n < \omega_{n+1}$
for all $n, n+1 \in I$.
Typical examples include the $\phi^4$ model $W(\phi) = (1 - \phi^2)^2$,
the $\phi^6$ model $W(\phi) = \phi^2(1 - \phi^2)^2$, and
the sine-Gordon model $W(\phi) = 1 - \cos(\phi)$.

We denote $(\phi_0, \dot \phi_0)$ a generic element of the phase space.
The potential energy $E_p$, the kinetic energy $E_k$ and the total energy $E$ are given by
\begin{align}
E_p(\phi_0) &= \int_{\bR}\Big(\frac 12 (\partial_x\phi_0)^2 + W(\phi_0)\Big)\ud x, \\
E_k(\dot \phi_0) &= \int_{\bR}\frac 12( \dot\phi_0)^2 \ud x, \\
E(\phi_0, \dot\phi_0) &= \int_{\bR}\Big(\frac 12(\dot \phi_0)^2+\frac 12 (\partial_x\phi_0)^2 + W(\phi_0)\Big)\ud x.
\end{align}
The set of finite energy states $(\phi_0, \dot \phi_0)$ is a union of $|I|^2$ affine spaces, called \emph{sectors},
\begin{equation}
\label{eq:sectors}
\cE_{m, n} := \{(\phi_0, \dot \phi_0): E(\phi_0, \dot \phi_0) < \infty\ \text{and}\ \lim_{x \to -\infty}\phi_0(x) = \omega_m, \lim_{x \to \infty}\phi_0(x) = \omega_n\},
\end{equation}
each of which is parallel to the linear space $\cE := H^1(\bR) \times L^2(\bR)$ which we call the \emph{energy space},
see Section~\ref{ssec:sectors}.

Equation \eqref{eq:csf} admits static solutions. They are the critical points
of the potential energy. The trivial ones are the vacuum fields $\phi(t, x) = \omega_n$ for some $n \in I$.
The solution $\phi(t, x) = \omega_n$ has zero energy and is the ground state in $\cE_{n, n}$.

There are also non-constant static solutions $\phi(t, x)$
connecting two consecutive vacua, that is
\begin{equation}
\label{eq:static-nn'}
\lim_{x \to -\infty}\phi(t, x) = \omega_{n}, \quad \lim_{x \to \infty}\phi(t, x) = \omega_{n'},\quad\text{for some }n, n' \in I\text{ and }|n - n'| = 1.
\end{equation}
One can discribe all these solutions.
For all $n, n' \in I$ with $|n - n'| = 1$ there exists a function $H_{n, n'}(x)$
such that all the static solutions satisfying \eqref{eq:static-nn'} are $\phi(t, x) = H_{n, n'}(x - a)$ for some $a \in \bR$.
These solutions are the ground states in $\cE_{n, n'}$:
\begin{equation}
\label{eq:ground-kink}
\inf_{(\phi_0, \dot \phi_0) \in \cE_{n, n'}}E(\phi_0, \dot \phi_0) = E_p(H_{n,n'}).
\end{equation}
If $n' = n+1$, we call these static solutions the \emph{kinks}. If $n' = n-1$, we call them \emph{antikinks}.
The functions $H_{n, n'}$ are studied in detail in Section~\ref{ssec:static}.

It is not difficult to see that for $n' - n \geq 2$ we have
\begin{equation}
\inf_{(\phi_0, \dot \phi_0) \in \cE_{n, n'}}E(\phi_0, \dot \phi_0) = \inf_{(\phi_0, \dot\phi_0) \in \cE_{n', n}}E(\phi_0, \dot\phi_0) = \sum_{l=n}^{n'-1}E_p(H_{l,l+1}),
\end{equation}
but the infimum is not attained, and there is no ground state in $\cE_{n, n'}$ or $\cE_{n', n}$.
Constant solutions, kinks and antikinks are all the stationary states of \eqref{eq:csf}, see Section~\ref{ssec:static}.

\begin{remark}
There is in general no ``canonical'' choice of $H_{n, n'}$
among the family of its space translates.
For every pair $n, n' \in I$ such that $|n - n'| = 1$, we make an arbitrary choice.
\end{remark}

An important property of the equation \eqref{eq:csf} is the invariance by space-time translations and Lorentz transformations
\begin{equation}
\label{eq:lorentz-def}
\begin{gathered}
(t, x) = \big(t_0 + \gamma(t' + vx'), x_0 + \gamma(x' + vt')\big)\quad\Leftrightarrow\\
\Leftrightarrow \quad(t', x') = \big(\gamma(t - t_0 - v(x - x_0)), \gamma(x - x_0 - v(t - t_0))\big),
\end{gathered}
\end{equation}
where $(t_0, x_0) \in \bR^2$, $-1 < v < 1$ and $\gamma := (1-v^2)^{-1/2}$,
that is, if $\psi(t', x')$ is a solution of \eqref{eq:csf} in some region of the $(t', x')$-space-time,
then $\phi(t, x) := \psi(t', x')$ is a solution of \eqref{eq:csf} as well,
in the corresponding region of the $(t, x)$-space-times.
Applying this transformation to the static solutions found above,
we obtain the moving kinks and antikinks
\begin{equation}
\phi(t, x) = H_{n, n'}(\gamma(x - vt-a)), \qquad |n - n'| \leq 1.
\end{equation}
One can easily check that these are all the travelling waves, that is all the finite-energy solutions of \eqref{eq:csf}
such that $\phi(t, x) = \psi(x - vt)$ for some function $\psi: \bR \to \bR$ and some $v \in \bR$.
\begin{definition}
Let $K \in \{0, 1, 2, \ldots\}$. If $\bs n = (n_0, n_1, \ldots, n_K) \in I^{K+1}$
is a sequence such that $|n_{k-1} - n_k| = 1$ for all $k \in \{1, \ldots, K\}$,
we say that $\bs n$ is a \emph{chain of vacua}.
We say that
\begin{equation}
S^{(K)} := \{\bs v = (v_1, \ldots, v_K) \in \bR^K: -1 < v_1 < \ldots < v_K < 1\}
\end{equation}
is the set of \emph{admissible velocities}.
\end{definition}
Fix $K\in \{0, 1, 2, \ldots\}$ and a chain of vacua $\bs n$.
Given $\bs v = (v_1, \ldots, v_K) \in S^{(K)}$ and $\bs a = (a_1, \ldots, a_K)\in\bR^K$, we set
\begin{equation}
\label{def profile}
H(\bs v, \bs a; t, x) := \omega_{n_0} + \sum_{k=1}^K \big(H_{n_{k-1}, n_k}(\gamma_k(x - v_k t - a_k)) - \omega_{n_{k-1}}\big),
\end{equation}
where $\gamma_k := (1 - v_k^2)^{-\frac 12}$ is the Lorentz factor.
Thus, for $t$ large, $H(\bs v, \bs a)$
is a superposition of translated and Lorentz-transformed kinks, separated by large distances.
\subsection{Statement of the results and comments}
\label{ssec:results}
Our first goal is to construct smooth multi-soliton solutions,
that is solutions converging to $H(\bs v, \bs a; t, x)$ in the energy space.
The function space relevant for us is the space of functions defined for large times
whose energy decays exponentially. For $T \in \bR$ and $\delta > 0$, we set
\begin{equation}
\|\psi\|_{\cH_{T, \delta}}^2 := \sup_{t > T}\ \eee^{2\delta t}\int_\bR\big(|\partial_t \psi(t, x)|^2
+ |\partial_x \psi(t, x)|^2 + |\psi(t, x)|^2\big)\ud x < \infty.
\end{equation}
We can state our result on the existence and uniqueness of solutions which converge to multi-kinks exponentially in time.
\begin{theorem}
\label{thm:construct}
For all $K \in \bN_0$, chain of vacua $\bs n$, $\bs v \in S^{(K)}$ and $\bs a \in \bR^K$,
there exist $T_0 \in \bR$, $\delta_0 > 0$ and $\Psi(\bs v, \bs a; \cdot, \cdot) \in \cH_{T_0, \delta_0}$
such that
\begin{equation}
\phi(t, x) := H(\bs v, \bs a; t, x) + \Psi(\bs v, \bs a; t, x)
\end{equation}
is a solution of \eqref{eq:csf}.

If $T \in \bR$, $\delta > 0$ and $\psi \in \cH_{T, \delta}$ is such that $\phi(t, x) = H(\bs v, \bs a; t, x) + \psi(t, x)$
is a solution of \eqref{eq:csf}, then $\psi(t, x) = \Psi(\bs v, \bs a; t, x)$.
\end{theorem}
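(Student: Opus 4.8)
The plan is to construct $\Psi$ as a fixed point of the Duhamel formulation of \eqref{eq:csf} obtained by linearizing around the profile $H(\bs v, \bs a)$, working backwards from $t = +\infty$ so that the exponentially small ``initial data at infinity'' is zero. First I would write $\phi = H + \psi$; since each summand $H_{n_{k-1},n_k}(\gamma_k(x - v_k t - a_k))$ solves \eqref{eq:csf} exactly, the error term $\mathrm{Err}(t,x) := \partial_t^2 H - \partial_x^2 H + W'(H)$ is a sum of interaction terms, each supported (up to exponential tails) near a region where two distinct kinks overlap. Because the velocities $v_1 < \cdots < v_K$ are strictly ordered, the pairwise distances between kink centers grow linearly in $t$, and by the exponential spatial decay of $H_{n,n'}(y) - \omega_{n}$ resp.\ $H_{n,n'}(y) - \omega_{n'}$ as $y \to \mp\infty$ (a consequence of (A3), giving linearization with strictly positive mass $W''(\omega)$ at each vacuum), one gets $\|\mathrm{Err}(t)\|_{L^2} \lesssim \eee^{-2\delta_1 t}$ for some $\delta_1 > 0$ depending on the minimal velocity gap and the masses. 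The equation for $\psi$ then reads $\partial_t^2 \psi - \partial_x^2 \psi + W''(H)\psi = -\mathrm{Err} - N(\psi)$, where $N(\psi) = W'(H + \psi) - W'(H) - W''(H)\psi = O(\psi^2)$ is the superquadratic remainder.

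Next I would set up the fixed-point scheme in $\cH_{T_0, \delta_0}$. Rather than inverting the time-dependent operator $\partial_t^2 - \partial_x^2 + W''(H(t))$ directly, the cleanest route is to treat the wave equation $\partial_t^2 \psi - \partial_x^2 \psi = F$ with the free flow and put the potential term $W''(H)\psi$ on the right-hand side along with $\mathrm{Err}$ and $N(\psi)$: define $\psi = \Phi(\psi)$ where $\Phi(\psi)(t) := -\int_t^\infty \frac{\sin((t-s)\sqrt{-\partial_x^2})}{\sqrt{-\partial_x^2}}\big(W''(H(s))\psi(s) + \mathrm{Err}(s) + N(\psi(s))\big)\,\ud s$, reading off $(\psi, \partial_t\psi)$ at $t = +\infty$ as $0$. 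The key linear estimate is that this backward Duhamel integral gains a factor $\eee^{-\delta_0 t}$ relative to an exponentially decaying source: if $\|G(s)\|_{L^2} \le C\eee^{-2\delta_0 s}$ then $\|\Phi\text{-type integral}\|_{\dot H^1 \times L^2}(t) \lesssim \delta_0^{-1}\eee^{-2\delta_0 t}$, since the half-wave propagators are bounded on $L^2$ and on $\dot H^1$ uniformly in time. The potentially dangerous term is $W''(H(s))\psi(s)$, which is merely bounded, not small, as a multiplier; but here one uses that $W''(H(s)) \to W''(\omega_{n_j})$ uniformly away from the kink cores, so $W''(H(s))$ differs from the constant-coefficient masses only on a bounded set — and on that set one still has the free-wave $L^2$-boundedness, so the contribution is controlled by $\|\psi\|_{\cH_{T_0,\delta_0}}$ times a constant independent of $T_0$. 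To close the contraction one then needs $\delta_0$ small enough (so that $\delta_0 < \delta_1$, absorbing $\mathrm{Err}$) and $T_0$ large enough (so that the quadratic term $N(\psi)$, of size $\lesssim \eee^{-\delta_0 T_0}\|\psi\|^2$, is a genuine contraction on a small ball); the standard Banach fixed-point argument in the closed ball $\{\|\psi\|_{\cH_{T_0,\delta_0}} \le R\eee^{-\delta_0 T_0}\}$ then produces $\Psi(\bs v, \bs a)$, and elliptic/energy regularity upgrades it to a classical solution of \eqref{eq:csf}.

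For uniqueness, suppose $\psi \in \cH_{T,\delta}$ with the same profile also solves \eqref{eq:csf}. Replacing $\delta$ by $\min(\delta, \delta_0)$ and $T$ by $\max(T, T_0)$ (which only shrinks the space), the difference $w := \psi - \Psi$ lies in $\cH_{T',\delta'}$, has zero data at $t = +\infty$, and satisfies a linear wave equation $\partial_t^2 w - \partial_x^2 w = -W''(H)w - (N(\psi) - N(\Psi))$ with $\|N(\psi) - N(\Psi)\|_{L^2}(s) \lesssim (\|\psi\|_{\cH} + \|\Psi\|_{\cH})\eee^{-\delta' s}\|w\|_{L^\infty_x}(s)$. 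Feeding this into the same backward Duhamel identity and running the linear estimate on the interval $(T'', \infty)$ for $T''$ large gives $\|w\|_{\cH_{T'',\delta'}} \le \tfrac12 \|w\|_{\cH_{T'',\delta'}}$, hence $w \equiv 0$ for $t > T''$; finite speed of propagation (or backward uniqueness for the nonlinear wave equation with smooth nonlinearity) then propagates $w \equiv 0$ down to $t > T'$, completing the argument.

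I expect the main obstacle to be the bookkeeping for the error term and, more subtly, making sure the ``free-wave plus bounded potential on the right-hand side'' scheme actually closes \emph{uniformly in $T_0$} — i.e.\ that the non-small multiplier $W''(H(s))$ does not destroy the contraction. The honest fix is that the backward Duhamel operator with the \emph{full} linearized propagator (not the free one) is what one should really invert; constructing that propagator and proving it maps $\eee^{-2\delta_0 s}L^2$-sources to $\eee^{-2\delta_0 t}\cdot(\dot H^1 \times L^2)$ requires a Gronwall/energy argument for the time-dependent operator $\partial_t^2 - \partial_x^2 + W''(H(t,\cdot))$, using that this operator is a small exponential perturbation of the (explicitly solvable, stable) sum of constant-mass Klein–Gordon operators on the respective spatial regions. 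That perturbative energy estimate, and the choice of weight $\delta_0$ compatible with both the interaction rate $\delta_1$ and the spectral gaps $\sqrt{W''(\omega_n)}$, is where the real work lies; everything else is the routine contraction-mapping and regularity machinery.
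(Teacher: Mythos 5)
Your overall architecture (backward-in-time fixed point in an exponentially weighted energy space, with the interaction error made exponentially small by the velocity ordering and the kink tails) is the same as the paper's, but the crucial linear step has a genuine gap, and you half-notice it without repairing it. The free-wave Duhamel scheme with $W''(H)\psi$ on the right-hand side does not close: that term is of fixed size $\sim m_{n_j}^2$, uniformly in $t$, so the backward integral produces a factor of order $\|W''\|_{L^\infty}/\delta_0 \gg 1$ in front of $\|\psi\|_{\cH_{T_0,\delta_0}}$, and neither enlarging $T_0$ nor shrinking $\delta_0$ helps; you concede this and propose instead to invert the full linearized operator $\partial_t^2-\partial_x^2+V(\bs v,\bs a)$ by ``Gronwall/energy, since it is a small exponential perturbation of constant-mass Klein--Gordon operators.'' That fix is not correct: near each kink core the potential differs from the constant mass by an $O(1)$, \emph{non-decaying} localized well, which creates the kernel of $L_{n,n'}$ and, for the moving kink, a generalized kernel. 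Concretely, the homogeneous linearized flow \eqref{eq:csf-ham-one-lin} admits the bounded solution \eqref{eq:part-sol-3} and the \emph{linearly growing} solution \eqref{eq:part-sol-4}, so no Gronwall or perturbative energy argument can yield a uniform-in-time bound of the type $\|h\|_{\cH_{T,\delta}}\lesssim\|f\|_{L^2_{T,\delta}}$ without an extra mechanism that eliminates these $2K$ neutral/secular directions. This resolvent estimate (Lemma~\ref{lem:nonhom} in the paper) is precisely where the real work lies, and your proposal does not contain the idea needed for it.

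What the paper does instead: it establishes coercivity of a velocity-adapted quadratic form $Q_{\bs v,\bs a}$ (with the cross terms $\chi_j v_j\dot h\partial_x h$) \emph{modulo} the projections onto $\psi_j^0,\psi_j^1$ (Proposition~\ref{prop:coermulti}), proves approximate conservation laws for exactly those projections (Proposition~\ref{prop:msol}, i.e.\ \eqref{eq:zero-mode-01}--\eqref{eq:zero-mode-11}), and then integrates both the projection ODEs and the differential inequality for $Q$ \emph{from $t=+\infty$}, using the decay built into $\cH_{T,\delta}$ as the boundary condition that kills the kernel and generalized-kernel components; this yields the a priori bound and hence the bounded inverse $R(\bs v,\bs a):L^2_{T,\delta_0}\to\cH_{T,\delta_0}$ for \eqref{eq:lemma31}, after which the contraction $g\mapsto R(\bs v,\bs a)N(\bs v,\bs a;g)$ and the uniqueness argument run essentially as you describe. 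A secondary technical point you also gloss over, and which the paper handles explicitly, is that $V$ is not a sum of localized pieces (it tends to the distinct constants $m_{n_0}^2,\dots,m_{n_K}^2$ between the kinks), which requires the decomposition into the slowly varying pieces $\wt V_j$ plus exponentially localized pieces $V_j$ in the energy estimate. Without the coercivity-plus-modulation ingredient your linear estimate, and with it the whole contraction and the uniqueness claim, does not go through.
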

Next, we consider the action of the transformations \eqref{eq:lorentz-def} on the set of the constructed solutions.
This is not a trivial task, since there is no absolute time,
hence functions decaying exponentially in time $t$ could have some other behaviour with respect to another time $t'$.
It is easy to check that, under this transform, $H(\bs v, \bs a)$ is mapped to $H(\bs v', \bs a')$, where
\begin{equation}
\label{eq:prime-formula}
v_j' := \frac{v_j - v}{1 - v_jv}, \qquad a_j' := (\gamma_j')^{-1}\gamma_j(a_j + v_j t_0 - x_0)
\end{equation}
(the formula for $v_j'$ is the well-known velocity-addition formula in Special Relativity).
\begin{theorem}
\label{thm:lorentz}
The set of the solutions constructed in Theorem~\ref{thm:construct} is invariant under the transformations \eqref{eq:lorentz-def}:
if $(t, x)$ and $(t', x')$ are related by \eqref{eq:lorentz-def}, then
\begin{equation}
\label{eq:lorentz-inv}
H(\bs v, \bs a; t, x) + \Psi(\bs v, \bs a; t, x) = H(\bs v', \bs a'; t', x') + \Psi(\bs v', \bs a'; t', x'),
\end{equation}
where $\bs v'$ and $\bs a'$ are given by \eqref{eq:prime-formula}.
\end{theorem}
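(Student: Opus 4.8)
The plan is to deduce Theorem~\ref{thm:lorentz} from the uniqueness part of Theorem~\ref{thm:construct}, applied in the primed frame. Concretely, fix $(t_0, x_0) \in \bR^2$ and $-1 < v < 1$, let $\bs v'$ and $\bs a'$ be defined by \eqref{eq:prime-formula}, and let $\phi(t,x) := H(\bs v, \bs a; t, x) + \Psi(\bs v, \bs a; t, x)$ be the solution furnished by Theorem~\ref{thm:construct}. Define $\psi(t', x')$ by the rule that $\phi(t,x) = H(\bs v', \bs a'; t', x') + \psi(t', x')$ whenever $(t,x)$ and $(t', x')$ are related by \eqref{eq:lorentz-def}; equivalently $\psi(t', x') = \phi(t,x) - H(\bs v', \bs a'; t', x')$. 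By the Lorentz invariance of \eqref{eq:csf} recalled in the excerpt and the already-noted fact that $H(\bs v, \bs a)$ is mapped to $H(\bs v', \bs a')$ under this change of variables, the function $(t', x') \mapsto H(\bs v', \bs a'; t', x') + \psi(t', x')$ is a solution of \eqref{eq:csf}. If I can show $\psi \in \cH_{T', \delta'}$ for some $T' \in \bR$ and $\delta' > 0$, then the uniqueness clause of Theorem~\ref{thm:construct} (applied with data $\bs v', \bs a'$) forces $\psi = \Psi(\bs v', \bs a'; \cdot, \cdot)$, which is exactly \eqref{eq:lorentz-inv}.

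So the whole argument reduces to one analytic claim: \emph{if $\Psi(\bs v, \bs a) \in \cH_{T_0, \delta_0}$ in the unprimed time, then the same function, read off along constant-$t'$ slices, lies in $\cH_{T', \delta'}$ for suitable $T', \delta'$.} First I would write $\psi(t', x') = \Psi(\bs v, \bs a; t, x)$ with $(t,x)$ given by \eqref{eq:lorentz-def}, i.e.\ $t = t_0 + \gamma(t' + v x')$, $x = x_0 + \gamma(x' + v t')$. Along the slice $t' = \text{const}$ one has $t = t_0 + \gamma t' + \gamma v x'$, so $t \to \infty$ as $|x'| \to \infty$ in one direction but $t$ can stay bounded (indeed go to $-\infty$) in the other direction; this is precisely the subtlety flagged in the paragraph preceding the theorem. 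The key point to exploit is that $H(\bs v, \bs a; t, x)$, being a sum of Lorentz-boosted kinks with distinct velocities $v_k$, relaxes exponentially to the appropriate vacua \emph{in the spatial variable} away from the light-like directions $x \approx v_k t$, with rates controlled by $\min_k W''(\omega_k)^{1/2}$ and by the spectral gaps $|v_k - v_j|$; combined with the exponential-in-$t$ bound on $\Psi$, this should give a genuinely space-time exponential decay estimate of the form $|\Psi(\bs v, \bs a; t, x)| + |\partial_{t,x}\Psi| \lesssim e^{-\delta_0' (|t| + \text{dist}(x, \{v_k t\}))}$ valid for $t \geq T_0$, after possibly shrinking $\delta_0$. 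Establishing this enhanced decay — upgrading the norm $\cH_{T_0,\delta_0}$ to a pointwise weighted bound — is the step I expect to be the main obstacle, and I anticipate it is proved (perhaps already in an earlier section of the paper, or via the fixed-point construction underlying Theorem~\ref{thm:construct}) by a finite-speed-of-propagation / exterior-energy argument together with the structure of $H(\bs v, \bs a)$.

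Granting the space-time weighted bound, the remainder is bookkeeping with the Lorentz change of variables. On a slice $t' > T'$ I would change variables $x' \mapsto x = x_0 + \gamma(x' + v t')$ (a fixed affine map with Jacobian $\gamma$), express $\partial_{t'}\psi, \partial_{x'}\psi$ in terms of $\partial_t \Psi, \partial_x \Psi$ via the chain rule (the coefficients are the constants $\gamma, \gamma v$), and integrate the weighted pointwise bound in $x'$. Using $t = t_0 + \gamma t' + \gamma v x'$ and the fact that the "bad'' set $\bigcup_k \{x \approx v_k t\}$ meets each slice $t' = \text{const}$ in a bounded region whose weight $e^{-\delta_0'|t|}$ is itself exponentially small in $t'$ (since there $t \sim \gamma t'$), one gets $\int_\bR (|\partial_{t'}\psi|^2 + |\partial_{x'}\psi|^2 + |\psi|^2)(t', x') \ud x' \lesssim e^{-2\delta' t'}$ for an explicit $\delta' > 0$ (roughly $\delta' = \delta_0'/\gamma$ up to the geometry of the velocity cone), hence $\psi \in \cH_{T', \delta'}$. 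Plugging this into the uniqueness statement of Theorem~\ref{thm:construct} completes the proof; one should also note in passing that the special cases $v = 0$ (pure space-time translation, where the change of variables is trivial) and $K = 0$ (no kinks) are immediate, and that composing two such transformations is consistent because \eqref{eq:prime-formula} respects the group law of Lorentz boosts and translations.
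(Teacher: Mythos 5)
Your reduction to the uniqueness clause of Theorem~\ref{thm:construct} in the primed frame is exactly the paper's strategy, and your treatment of the portion of a constant-$t'$ slice where the unprimed time $t$ is large (uniform-in-$x$ exponential decay of $\Psi$ via Theorem~\ref{thm:smooth} and $H^1\subset L^\infty$) also matches. The gap is in your key analytic claim. You propose a pointwise weighted bound $|\Psi(\bs v,\bs a;t,x)|+|\partial_{t,x}\Psi|\lesssim \eee^{-\delta_0'(|t|+\dist(x,\{v_kt\}))}$ \emph{valid for $t\geq T_0$}, and then integrate it over an entire slice $t'=\mathrm{const}$. But along such a slice one has $t=t_0+v(x-x_0)$, so as $x'\to-\infty$ the unprimed time $t$ drops below $T_0$ and tends to $-\infty$; on that unbounded portion of the slice your bound says nothing, since the fixed-point construction only controls $\Psi$ for $t>T_0$ (for $t<T_0$ the solution is merely globally defined with finite energy, and could in particular pass through a collision region). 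If instead you intend the bound to hold for all $t\in\bR$, it is an unproved and substantially stronger statement than anything produced by the construction, and proving it would itself require the very ingredient you defer to ``an earlier section'': no such space-time pointwise estimate appears in the paper (and in the region between the kinks the extra factor $\eee^{-\delta_0'\dist(x,\{v_kt\})}$ is not to be expected, since radiation there is only exponentially small in $t$).

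What actually closes this region in the paper is a domain-of-dependence/exterior-energy argument for the \emph{nonlinear} solution: one takes the cone $\Delta$ with vertex $(t_0,x_0)=(t'/(2\gamma),-t'/(2\gamma v))$, bounded by the horizontal half-line $\{t=t_0,\ x\le x_0\}$ and the slanted half-line $\{t=t_0+v(x-x_0),\ x\le x_0\}$ (the far part of the $t'$-slice). At time $t_0\sim t'$ the energy of $\phi$ relative to the vacuum $\omega_{n_0}$ in $\{x\le x_0\}$ is $\lesssim\eee^{-\delta t'}$, because $g(t_0)$ has exponentially small energy and $H-\omega_{n_0}$ and its derivatives are $\lesssim\eee^{-\delta x}$ on $\Delta$; then Green's theorem applied to the energy current $\big(\tfrac12(\partial_t\phi)^2+\tfrac12(\partial_x\phi)^2+W(\phi),\,-\partial_t\phi\,\partial_x\phi\big)$ on $\Delta$ transfers this smallness to the slanted boundary, and the equivalence $W(\omega_{n_0}+g)\simeq g^2$ converts it into the $\cH_{T',\delta'}$-type bound on $\psi$ there. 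Without this flux estimate (or an equivalent substitute valid where $t<T_0$), your argument does not yield $\psi\in\cH_{T',\delta'}$, so the appeal to uniqueness cannot be made. The rest of your outline (the change of variables, the group-law consistency of \eqref{eq:prime-formula}, the trivial cases) is fine but peripheral to this missing step.
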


Finally, we address the question of smoothness of these objects.
\begin{theorem}
\label{thm:smooth}
If $K \in \bN_0$, $\bs n$ is a chain of vacua and $A \ssubset S^{(K)} \times \bR^K$ is an open set with compact closure,
then $(T_0, \delta_0)$ in Theorem~\ref{thm:construct} can be chosen uniformly for all $(\bs v, \bs a) \in A$.
The function $\Psi: A \times (T_0, \infty) \times \bR \to \bR$ constructed in Theorem~\ref{thm:construct}
is of class $C^\infty$ and all its partial derivatives decay exponentially in $t \to \infty$.
\end{theorem}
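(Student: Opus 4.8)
The plan is to obtain $\Psi$ not as a limit of approximate solutions, but as the output of a fixed-point argument whose underlying Banach space and contraction constant depend continuously (indeed smoothly) on the parameters $(\bs v, \bs a)$. Concretely, plugging $\phi = H(\bs v, \bs a) + \psi$ into \eqref{eq:csf} gives an equation of the form $\partial_t^2 \psi - \partial_x^2 \psi + \cL(\bs v, \bs a; t)\psi = -\mathrm{Err}(\bs v, \bs a; t) + N(\bs v, \bs a; t, \psi)$, where $\mathrm{Err} := \partial_t^2 H - \partial_x^2 H + W'(H)$ is the error of the ansatz, $\cL := W''(H)$ is the (time-dependent) linearised operator, and $N$ collects the quadratic-and-higher terms in $\psi$. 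Because the kinks in $H(\bs v,\bs a)$ travel with distinct velocities and converge exponentially to their vacua, the error term satisfies $\|\mathrm{Err}(\bs v, \bs a; t)\|_{L^2} \lesssim \eee^{-\delta_0 t}$ for $t > T_0$, and both bounds are uniform for $(\bs v, \bs a)\in A$ (the compact closure of $A$ keeps all velocity gaps bounded below and all Lorentz factors bounded above). The contraction is set up in the ball of radius $\sim \eee^{-\delta_0 t}$ in $\cH_{T_0, \delta_0}$, solving the wave equation backwards from $t = +\infty$ with zero data at infinity; this is exactly the scheme already used to prove Theorem~\ref{thm:construct}, so $\Psi(\bs v, \bs a)$ is its unique fixed point.

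Smoothness then follows from the standard principle that a fixed point of a smooth, uniformly contracting family of maps depends smoothly on the parameters. I would first check that the map $\bs p = (\bs v, \bs a) \mapsto H(\bs p; \cdot)$ is smooth from $A$ into a suitable weighted space (each summand $H_{n_{k-1},n_k}(\gamma_k(x - v_k t - a_k))$ is a smooth function of $(v_k, a_k)$ with values in such a space, using the exponential convergence of $H_{n,n'}$ to its vacua and the regularity established in Section~\ref{ssec:static}), hence $\mathrm{Err}$, $\cL$ and $N$ are smooth in $\bs p$ as well, with all derivatives still exponentially small in $t$. Then the fixed-point map $\Phi: A \times B \to B$ (where $B$ is the relevant ball in $\cH_{T_0,\delta_0}$), given by $\Phi(\bs p, \psi) := $ the backward solution of the linear wave equation with right-hand side $-\mathrm{Err}(\bs p) + N(\bs p, \psi)$, is $C^\infty$ jointly, and $\partial_\psi \Phi$ has operator norm $\le \tfrac12$ uniformly. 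The implicit function theorem in Banach spaces gives $\bs p \mapsto \Psi(\bs p) \in \cH_{T_0,\delta_0}$ of class $C^\infty$. To upgrade to genuine pointwise $C^\infty$ regularity of $\Psi(\bs p; t, x)$ and to control $\partial_t$, $\partial_x$ derivatives, I would commute $\partial_x$ (and, using the equation, $\partial_t$) through the fixed-point equation: differentiating \eqref{eq:csf} shows that $\partial_x^j \partial_t^i \Psi$ solves an inhomogeneous linear wave equation driven by exponentially decaying source terms built from lower-order derivatives, so an induction on the total order, each step a linear estimate in a space with the same exponential weight, yields that every mixed space-time derivative lies in $\cH_{T_0,\delta_0}$-type spaces and decays like $\eee^{-\delta_0 t}$; combining this with the parameter derivatives (which enter the sources the same way) gives joint smoothness of $\Psi: A\times(T_0,\infty)\times\bR\to\bR$ with exponentially decaying partials.

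The uniformity of $(T_0, \delta_0)$ over $A$ is not really a separate difficulty: $\delta_0$ can be taken to be any fixed number below $\min_k \nu_{n_{k-1},n_k}$ times the slowest relevant rate — more precisely below a quantity determined by the spectral gaps $W''(\omega_n)$ and the minimal velocity separation — all of which are bounded away from the bad values on $\overline{A}$, while $T_0$ is chosen so that the kinks are sufficiently separated and the contraction estimates close, again uniformly on the compact set $\overline{A}$. The main obstacle I anticipate is bookkeeping rather than conceptual: making the function spaces in which $H(\bs p;\cdot)$, $\mathrm{Err}$, $\cL$, $N$ live precise enough that (i) the nonlinear map $N$ is smooth with the required tame bounds, (ii) multiplication by $\cL(\bs p; t) - (\text{asymptotic constant coefficients})$ is a bounded operator with exponentially small norm on each weighted space, and (iii) the weights are consistent across the induction on derivative order, since differentiating in $x$ interacts with the moving frames $x - v_k t$ of the individual kinks and one must be careful that the weight is adapted to all $K$ frames simultaneously. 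Once the spaces are fixed correctly, each individual estimate is the routine energy estimate for the 1D wave equation with a short-range time-dependent potential, and the smoothness conclusion is a formal consequence of the implicit function theorem applied repeatedly.
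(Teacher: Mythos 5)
Your overall skeleton (contraction for the error equation in $\cH_{T_0,\delta_0}$, uniformity over $\conj A$ by compactness, then smooth dependence of the fixed point on $(\bs v,\bs a)$) is the same as the paper's, but the step you lean on — that the fixed-point map $\Phi(\bs p,\psi)=R(\bs p)\big({-}\mathrm{Err}(\bs p)+N(\bs p,\psi)\big)$ is jointly $C^\infty$ from $A\times B(\cH_{T_0,\delta_0})$ into $\cH_{T_0,\delta_0}$, so that the uniform-contraction/implicit function theorem applies in that single space — has a genuine gap. Differentiating the moving profiles and the potential in a velocity produces the factor
\begin{equation}
\partial_{v_k}\big[\gamma_k(x-v_kt-a_k)\big]=\gamma_k^3v_k(x-v_kt-a_k)-\gamma_k t,
\end{equation}
so $\partial_{v_k}H_k$, $\partial_{v_k}V$ and hence $\partial_{v_k}N(\bs p;g)$ contain terms of size $t\,\eee^{-\delta_0 t}$ in the relevant norms: they belong to $\cH_{T_0,\delta}$ only for $\delta<\delta_0$, not to $\cH_{T_0,\delta_0}$. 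Consequently $\Phi$ is not even $C^1$ in $\bs v$ as a map into the fixed space $\cH_{T_0,\delta_0}$, and the "standard principle that a fixed point of a smooth uniformly contracting family depends smoothly on parameters" cannot be invoked as stated; each parameter derivative forces a (small but nonzero) loss of exponential weight, so one needs either an IFT formulated on a scale of weighted spaces or a hands-on argument.

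This is precisely what the paper does instead: it proves smooth dependence by induction on the order of the derivatives (Lemmas~\ref{lem:reg-lin}, \ref{lem:cross}, \ref{lem:rhs-smooth}, \ref{lem:smooth}), defining each candidate derivative $\Theta=\partial_{a_k}\Psi$ or $\partial_{v_k}\Psi$ as the solution of an explicit inhomogeneous linearised equation such as \eqref{eq:dag}--\eqref{eq:dvg}, estimating the difference quotient remainder $\Xi_\epsilon$ directly through the a priori estimate of Lemma~\ref{lem:nonhom}, and explicitly passing from $\delta_0$ to a strictly smaller $\delta_1$ at each step (the paper even flags "it is here that the loss in the exponential convergence rate occurs, due to the polynomial growth of $\partial_{v_k}V$"). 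This is consistent with the theorem's conclusion, which asserts exponential decay of each partial derivative but not a single uniform rate. You do anticipate a weight-bookkeeping issue, but you attribute it to commuting $\partial_x$ with the moving frames rather than to the $v$-derivatives, and you give no mechanism (shrinking weights, scale of spaces, or an explicit difference-quotient argument) to repair the IFT step; as written, the smoothness part of your proof does not go through.
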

\begin{remark}
Uniqueness of pure multi-kink solutions could also be proved, modulo minor technicalities,
under the assumption of convergence faster than any power of $t$ (instead of the exponential convergence).
Uniqueness under \emph{no} assumptions on the rate of convergence remains an interesting open problem.
Note that in the elliptic case this was achieved by del~Pino, Kowalczyk and Pacard \cite{Pino-moduli},
and for KdV and generalised KdV by Martel \cite{Martel05}.
\end{remark}
\begin{remark}
We can say that $S^{(K)} \times \bR^K$ is the \emph{parameter manifold} of multikinks
in the forward time direction for a given chain of vacua (under the assumption of exponential in time convergence),
and $\Phi$ defines the \emph{solution map}.
The Lorentz transforms act on this parameter manifold,
and the quotient manifold parametrises the multikink solutions,
up to invariances of the equation.
It has dimension $0$ for $K = 1$ and dimension $2K - 3$ if $K \geq 2$.
\end{remark}
%
\begin{remark}
In the statement of Theorem~\ref{thm:lorentz}, $\Psi(\bs v, \bs a; t, x)$ and $\Psi(\bs v', \bs a'; t', x')$
are, a priori, well-defined only if both $t$ and $t'$ are large enough,
but in fact, under our assumptions, the problem is globally well-posed,
so that the Lorentz transform is well-defined.
\end{remark}
\begin{remark}
A well-known open problem is to describe the \emph{collisions},
that is the behaviour of a~given
multikink as $t \to -\infty$, see for example \cite{Goodman} for a mathematically non-rigorous discussion.
It is expected that, for generic $W$,
the collisions are inelastic, meaning that for $t \to -\infty$
the solution has a different behaviour than for $t \to \infty$.
In this paper, we only consider the evolution in one time direction.
\end{remark}
\subsection{Comparison with previous works}
Several multi-soliton constructions followed the paper by Martel \cite{Martel05} cited above,
see \cite{Combet, MRT-water}, and in particular \cite{CoMu2014, CoMa-multi}
devoted to the Klein-Gordon equations, very similar to \eqref{eq:csf}.
These works focused on the questions of existence, smoothness and uniqueness of multi-solitons
for given velocity and shift parameters.

Our contribution is to study the solution map, which involves the examination of smooth dependence of solutions
with respect to the velocities and shifts.
The action of Lorentz transforms on multi-solitons for wave equations was not clarified in the existing literature either.

All of these works followed the scheme introduced in \cite{Martel05} and \cite{Merle90},
based on energy estimates, weak compactness and weak continuity of the flow.
Here, we propose an alternative approach, based on the Contraction Principle,
which we believe is a convenient way of proving smooth dependence with respect to velocities and shifts.
Most likely, the weak compactness approach of \cite{Martel05, Merle90} could also be adopted.
We point out, however, that proving smooth dependence on the parameters is a more difficult task
than proving smoothness of each multi-soliton with respect to $t$ and $x$.
Indeed, the latter can be easily deduced from the regularising effect of the equation.
In contrast, we believe that smoothness with respect to the velocities and shifts cannot be directly inferred
from the arguments of \cite{CoMu2014, CoMa-multi}.

\subsection{Acknowledgements}
J.Jendrej was supported by  ANR-18-CE40-0028 project ESSED.

We would like to thank Micha{\l} Kowalczyk for enlightening discussions at an early stage of this project and for his detailed explanations about the paper \cite{Pino-moduli}.

We also thank the referees for suggesting many improvements in the submitted version of the~manuscript.

\section{Kinks, multikinks and their coercivity properties}
\label{sec:static}
\subsection{Finite energy sectors}
\label{ssec:sectors}
We prove that any finite energy state lies in one of the sectors defined by \eqref{eq:sectors}.
\begin{proposition}
\label{prop:sectors}
If $\phi_0 : \bR \to \bR$ is a measurable function which satisfies $E_p(\phi_0) < \infty$,
then $\phi_0 \in C(\bR)$ and there exist $m, n \in I$ such that
$\lim_{x\to -\infty}\phi_0(x) = \omega_m$, $\lim_{x\to \infty}\phi_0(x) = \omega_n$.
\end{proposition}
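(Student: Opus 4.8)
The plan is to show first that $\phi_0$ has a continuous representative, and then that this representative has limits at $\pm\infty$ which must be vacua.

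\medskip
\noindent\textit{Step 1: Continuity.} Since $E_p(\phi_0) < \infty$, in particular $\int_\bR (\partial_x \phi_0)^2 \ud x < \infty$, so $\partial_x \phi_0 \in L^2_{\tx{loc}}(\bR)$, hence $\phi_0 \in H^1_{\tx{loc}}(\bR)$. By the one-dimensional Sobolev embedding $H^1_{\tx{loc}}(\bR) \hookrightarrow C(\bR)$, the function $\phi_0$ agrees a.e.\ with a continuous function, which we henceforth call $\phi_0$. Moreover $\phi_0$ is absolutely continuous on every bounded interval, with $\phi_0(y) - \phi_0(x) = \int_x^y \partial_x \phi_0(s)\ud s$.

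\medskip
\noindent\textit{Step 2: A geodesic-type lower bound for the energy on an interval.} The key tool is the pointwise inequality $\tfrac12(\partial_x\phi_0)^2 + W(\phi_0) \geq \sqrt{2W(\phi_0)}\,|\partial_x\phi_0|$, valid since $a^2/2 + b \ge \sqrt{2b}\,|a|$ for $b \ge 0$. Defining the ``$W$-distance'' $G(\phi) := \int_0^\phi \sqrt{2W(s)}\ud s$, one gets for any $x < y$
\begin{equation}
\int_x^y \Big(\tfrac12(\partial_x\phi_0)^2 + W(\phi_0)\Big)\ud s \;\geq\; \int_x^y \big|\partial_s\big(G(\phi_0(s))\big)\big|\ud s \;\geq\; \big|G(\phi_0(y)) - G(\phi_0(x))\big|.
\end{equation}
Since the left side is bounded by $E_p(\phi_0) < \infty$ uniformly in $x, y$, the composed function $G \circ \phi_0$ is of bounded variation on $\bR$ and, being also continuous, it has finite limits as $x \to \pm\infty$. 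Assumption (A4) says exactly that $G(\phi) \to \pm\infty$ as $\phi \to \pm\infty$; combined with continuity and monotonicity of $G$, this makes $G$ a homeomorphism of $\bR$ onto $\bR$, so $\phi_0 = G^{-1}(G\circ\phi_0)$ itself has finite limits $\ell_\pm := \lim_{x\to\pm\infty}\phi_0(x) \in \bR$.

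\medskip
\noindent\textit{Step 3: The limits are vacua.} It remains to show $\ell_\pm \in \Omega$. Suppose $W(\ell_+) > 0$. By continuity of $W$ and of $\phi_0$ there are $\varepsilon > 0$ and $R$ with $W(\phi_0(x)) \geq \varepsilon$ for all $x > R$, forcing $\int_R^\infty W(\phi_0)\ud x = \infty$, contradicting $E_p(\phi_0) < \infty$. Hence $W(\ell_+) = 0$, i.e.\ $\ell_+ \in \Omega = \{\omega_n\}_{n\in I}$, and likewise $\ell_- = \omega_m$ for some $m \in I$. (Here we have not even needed that $\Omega$ has no accumulation points — assumptions (A1) and (A4) suffice for this proposition; (A2) and (A3) will matter for the structure of the kinks, not for this statement.)

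\medskip
\noindent The main point requiring care is Step 2: turning the finiteness of $E_p$ into a genuine constraint on the behaviour at infinity. The pointwise Cauchy--Schwarz/AM--GM trick $\tfrac12 a^2 + b \ge \sqrt{2b}\,|a|$ is the standard Bogomolny-type bound, and the role of (A4) is precisely to prevent $\phi_0$ from escaping to $\pm\infty$ while keeping $\int W(\phi_0)$ finite. Everything else is routine one-dimensional Sobolev theory and elementary real analysis.
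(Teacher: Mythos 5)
Your proof is correct under the paper's standing assumptions and follows essentially the same route as the paper: the Bogomolny-type pointwise bound, the auxiliary primitive $G(\phi)=\int_0^\phi\sqrt{2W}$ (the paper's $\Gamma$), finiteness of the energy forcing $G\circ\phi_0$ to have limits at $\pm\infty$, (A4) providing properness of $G$, and then the observation that the limits must lie in $\Omega$.

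One correction, however: your parenthetical claim that (A2) is not needed is wrong, and it masks a small unjustified step. To pass from ``$G\circ\phi_0$ has limits'' to ``$\phi_0$ has limits'' you invert $G$, and for that you need $G$ to be \emph{strictly} increasing (continuity plus non-strict monotonicity plus surjectivity does not make $G$ a homeomorphism). Strict monotonicity holds precisely because $W$ cannot vanish on an interval, which is where (A2) (zeros of $W$ have no accumulation points) enters; the paper quietly uses the same fact when it asserts that $\Gamma$ is strictly increasing. Without (A2) the proposition itself can fail: if $W\equiv 0$ on $[0,1]$, the function $\phi_0(x)=\tfrac12+\tfrac12\sin\big(\log(1+x^2)\big)$ satisfies $E_p(\phi_0)<\infty$ (its derivative is $O(1/|x|)$ and $W(\phi_0)\equiv 0$) yet has no limit as $x\to\infty$. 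So keep your argument, but replace the remark by the observation that (A2) guarantees $\int_{\phi_1}^{\phi_2}\sqrt{2W}>0$ for $\phi_1<\phi_2$, hence $G$ is a strictly increasing homeomorphism of $\bR$.
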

\begin{proof}
The proof is based on the so-called \emph{Bogomolny trick}.
Consider the auxiliary function
\begin{equation}
\label{eq:Gamma-def}
\Gamma(\phi) := \int_0^\phi \sqrt{2W(y)}\ud y.
\end{equation}
Then $\Gamma$ is a strictly increasing $C^1$ function and
assumption (A4) yields $\lim_{\phi \to \pm \infty}\Gamma(\phi) = \pm\infty$.

Since $\partial_x \phi_0 \in L^2(\bR)$, it is clear that $\phi_0 \in C(\bR)$.
Fix $x_1, x_2 \in \bR$, $x_1 < x_2$. We will check that
\begin{equation}
\label{eq:Gamma-ineq}
\begin{aligned}
|\Gamma(\phi_0(x_2)) - \Gamma(\phi_0(x_1))| &= \int_{x_1}^{x_2}\Big(\frac 12 (\partial_x\phi_0)^2 +
W(\phi_0)\Big)\ud x - \frac 12 \int_{x_1}^{x_2}\big(|\partial_x\phi_0| - \sqrt{2W(\phi_0)}\big)^2\ud x \\
& \leq \int_{x_1}^{x_2}\Big(\frac 12 (\partial_x\phi_0)^2 +
W(\phi_0)\Big)\ud x.
\end{aligned}
\end{equation}
By approximation, we can assume $\phi_0 \in C^1([x_1, x_2])$. The chain rule yields
\begin{equation}
|\partial_x \Gamma(\phi_0(x))| = |\partial_x \phi_0(x)|\sqrt{2W(\phi_0(x))}
= \frac 12 (\partial_x \phi_0(x))^2 + W(\phi_0(x)) - \frac 12\big(|\partial_x\phi_0| - \sqrt{2W(\phi_0)}\big)^2,
\end{equation}
and it suffices to integrate in $x$.

Since $E_p(\phi_0) < \infty$, we deduce that $\lim_{x \to \infty}\Gamma(\phi_0(x))$
exists and is finite. This implies, in turn, that $\lim_{x \to\infty} \phi_0(x)$
exists and is finite. It is immediate from the definition of $E_p$ that
$E_p(\phi_0) < \infty$ implies $\lim_{x \to\infty} \phi_0(x) \in \Omega$.
The situation is analogous for $x \to -\infty$.
\end{proof}
We skip the discussion of the Cauchy theory for \eqref{eq:csf}.
By well-established methods, one can obtain that \eqref{eq:csf} is locally well-posed in each sector $\cE_{m, n}$
and the energy is conserved.
The computation above shows, using again the assumption (A4), that a solution of finite energy is bounded,
which implies the absence of blow-up, hence global well-posedness in each sector. We thus have the following result,
which we state without proof.
\begin{proposition}
\label{prop:cauchy}
For all $m, n \in I$, $(\phi_0, \dot \phi_0) \in \cE_{m ,n}$ and $t_0 \in \bR$ there exists a unique solution
$(\phi, \partial_t \phi) \in C(\bR, \cE_{m ,n})$ of \eqref{eq:csf} such that $(\phi(t_0), \partial_t \phi(t_0)) = (\phi_0, \dot\phi_0)$.
\end{proposition}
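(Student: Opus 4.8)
The plan is to recast \eqref{eq:csf} as a semilinear wave equation in the energy space $\cE = H^1(\bR) \times L^2(\bR)$, solve it locally in time by a contraction argument, and then rule out blow-up using conservation of energy together with the Bogomolny estimate already obtained in the proof of Proposition~\ref{prop:sectors}.

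First I would fix, once and for all, a smooth reference function $\rho_{m,n}: \bR \to \bR$ equal to $\omega_m$ for $x \le -1$ and to $\omega_n$ for $x \ge 1$, so that $E_p(\rho_{m,n}) < \infty$. From the description of the finite-energy sectors (here one uses (A3), which forces $\phi_0 - \omega_n \in L^2$ near $+\infty$ and $\phi_0 - \omega_m \in L^2$ near $-\infty$ whenever $E_p(\phi_0) < \infty$), every state in $\cE_{m,n}$ can be written $(\rho_{m,n}, 0) + (\psi_0, \dot\psi_0)$ with $(\psi_0, \dot\psi_0) \in \cE$. Writing $\phi = \rho_{m,n} + \psi$, the equation becomes $\partial_t^2 \psi - \partial_x^2 \psi = \partial_x^2 \rho_{m,n} - W'(\rho_{m,n} + \psi) =: F(\psi)$, which is a genuine equation in $\cE$: $\partial_x^2 \rho_{m,n}$ and $W'(\rho_{m,n})$ are bounded with compact support, and by the one-dimensional embedding $H^1(\bR) \hookrightarrow L^\infty(\bR)$ together with $W' \in C^\infty$, the map $F$ sends $H^1(\bR)$ into $L^2(\bR)$ and is Lipschitz on bounded subsets. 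A standard Picard iteration on the Duhamel formula for the free one-dimensional wave propagator, which is bounded on $\cE$ with operator norm growing at most linearly in time, then produces a unique solution $(\psi, \partial_t \psi) \in C([t_0 - \tau, t_0 + \tau], \cE)$ with $\tau$ depending only on the norm of the data in $\cE$; uniqueness in this class propagates by Gronwall's inequality, and conservation of $E$ follows by the usual regularization argument.

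The main, and only non-routine, step is upgrading this to a global solution. Here I would argue that for every time $t$ in the maximal interval of existence one has $E_p(\phi(t)) \le E(\phi(t), \partial_t\phi(t)) = E(\phi_0, \dot\phi_0) < \infty$, so the inequality \eqref{eq:Gamma-ineq} gives $|\Gamma(\phi(t,x)) - \Gamma(\omega_m)| \le E(\phi_0, \dot\phi_0)$ for every $x$; since assumption (A4) makes $\Gamma$ a homeomorphism of $\bR$, this confines $\phi(t, \cdot)$ to a fixed compact subset of $\bR$, uniformly in $t$ and $x$. Consequently $W'(\rho_{m,n} + \psi(t))$ stays bounded in $L^2(\bR)$, while $\|\partial_x \psi(t)\|_{L^2}$ and $\|\partial_t \psi(t)\|_{L^2}$ are controlled by a constant depending only on $E(\phi_0, \dot\phi_0)$; integrating $\partial_t \psi$ in time bounds $\|\psi(t)\|_{L^2}$ by an affine function of $t$. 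Hence $\|(\psi(t), \partial_t \psi(t))\|_{\cE}$ cannot escape to infinity in finite time, and the blow-up alternative forces the solution to exist for all $t \in \bR$.

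The hard part is precisely this last a priori bound, that is, converting finiteness of the conserved energy into a uniform $L^\infty$ bound on $\phi$; this is exactly where (A4) enters, through the Bogomolny trick of Proposition~\ref{prop:sectors}. All the remaining ingredients — the fixed-point scheme, the Gronwall uniqueness, the energy conservation — are by now standard for one-dimensional semilinear wave equations, which is presumably why the proposition is stated without proof.
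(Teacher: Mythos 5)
Your proposal is correct and follows essentially the route the paper itself indicates when it states the proposition without proof: standard local well-posedness in the energy space after subtracting a fixed reference profile, conservation of $E$, and the (A4)-based Bogomolny bound from the proof of Proposition~\ref{prop:sectors} giving a uniform $L^\infty$ confinement of $\phi$, which rules out finite-time blow-up. The only cosmetic point is that $\|W'(\rho_{m,n}+\psi(t))\|_{L^2}$ and $\|\psi(t)\|_{L^2}$ are bounded on bounded time intervals (growing at most linearly) rather than uniformly in $t$, which is all the blow-up alternative requires.
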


\subsection{Stationary solutions}
\label{ssec:static}
For $n, n+1\in I$, we define the \emph{kink} $H_{n, n+1}$ by the formula
\begin{equation}
\label{eq:G-def}
H_{n,n+1}(x) = G_n^{-1}(x), \qquad\text{with}\ \ G_n(\psi) := \int_{\psi_{n,n+1}}^\psi \frac{\ud y}{\sqrt{2W(y)}}\ \ \text{for all }\psi \in (\omega_n, \omega_{n+1}),
\end{equation}
where $\psi_{n,n+1} \in (\omega_n, \omega_{n+1})$ is chosen arbitrarily.
Since, by assumption, $\omega_n$ and $\omega_{n+1}$ are non-degenerate zeros of $W$,
we see that $\lim_{\psi \to \omega_n}G_n(\psi) = -\infty$ and $\lim_{\psi \to \omega_{n+1}}G_n(\psi) = \infty$, thus $H_{n, n+1}$ is a well-defined smooth increasing function, $\lim_{x \to -\infty} H_{n,n+1}(x) = \omega_n$ and $\lim_{x \to \infty} H_{n,n+1}(x) = \omega_{n+1}$.
It satisfies the differential equation
\begin{equation}
\label{eq:kink-eq}
\partial_x H_{n, n+1}(x) = \sqrt{2W(H_{n, n+1}(x))}.
\end{equation}

We also define the \emph{antikink}
\begin{equation}
H_{n+1, n}(x) := H_{n, n+1}(-x),
\end{equation}
which satisfies the differential equation
\begin{equation}
\label{eq:antikink-eq}
\partial_x H_{n+1, n}(x) = -\sqrt{2W(H_{n+1, n}(x))}.
\end{equation}

We denote $m_n := \sqrt{W''(\omega_n)} > 0$ the mass corresponding to the vacuum $\omega_n$.
The exponential decay of $H_{n, n'}(x)$ and its derivatives for $|x|$ large will be essential for our analysis.
\begin{proposition}\label{prop:decaykink}
Let $n, n' \in I$ with $|n - n'| = 1$. The function $H_{n, n'}$ is of class $C^\infty$.
For any $k \in \{0\} \cup \bN$ there exists $C = C(n, n', k) > 0$ such that
\begin{align}
\big|\partial_x^k H_{n, n'}(x) - \delta_{0, k} \omega_n\big| &\leq C\eee^{m_n x}, &\text{for all } x \leq 0, \label{eq:Hnn'-asym}
\\
\big|\partial_x^k H_{n, n'}(x) - \delta_{0, k} \omega_{n'}\big| &\leq C\eee^{-m_{n'} x},&\text{for all } x \geq 0.\label{eq:Hnn'-asym-bis}
\end{align}
where $\delta_{0, 0} = 1$ and $\delta_{0, k} = 0$ for $k > 0$.
\end{proposition}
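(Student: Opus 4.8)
The plan is to establish the asymptotics by analyzing the first-order ODE \eqref{eq:kink-eq} near the vacua, which are hyperbolic rest points of the associated one-dimensional flow. Since all statements for the antikink follow by reflection $x \mapsto -x$, it suffices to treat $H_{n, n'}$ with $n' = n+1$, and by symmetry of the two endpoints it is enough to handle $x \to +\infty$, i.e.\ \eqref{eq:Hnn'-asym-bis}. First I would set $h(x) := H_{n, n+1}(x) - \omega_{n+1}$, so that $h < 0$, $h(x) \to 0$ as $x \to \infty$, and \eqref{eq:kink-eq} reads $h'(x) = \sqrt{2W(\omega_{n+1} + h(x))}$. Near $h = 0$ assumption (A3) and a Taylor expansion of $W$ (recall $W(\omega_{n+1}) = W'(\omega_{n+1}) = 0$, $W''(\omega_{n+1}) = m_{n+1}^2 > 0$) give $2W(\omega_{n+1} + h) = m_{n+1}^2 h^2\big(1 + O(h)\big)$ as $h \to 0$, hence $\sqrt{2W(\omega_{n+1}+h)} = m_{n+1}|h|\sqrt{1 + O(h)} = -m_{n+1} h + O(h^2)$ using $h<0$. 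Therefore $h$ satisfies $h' = -m_{n+1} h + O(h^2)$.

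The next step is to extract exponential decay of $h$. Since $h < 0$ and, for $x$ large, $h' = -m_{n+1}h + O(h^2) > 0$, the function $h$ is increasing to $0$; comparing with the linear equation one gets, for $x$ beyond some $x_1$, the differential inequality $\tfrac{1}{2} m_{n+1} \le -h'(x)/h(x) \le 2 m_{n+1}$, and integrating $\frac{\vd}{\vd x}\log|h(x)|$ from $x_1$ to $x$ yields $c_1 \eee^{-2 m_{n+1} x} \le |h(x)| \le c_2 \eee^{-\frac12 m_{n+1} x}$. To upgrade this to the sharp rate $\eee^{-m_{n+1} x}$, I would write $h(x) = \eee^{-m_{n+1} x} g(x)$ and derive $g'(x) = \eee^{m_{n+1} x} O(h(x)^2) = O\big(\eee^{-m_{n+1} x} g(x)^2\big)$, which together with the crude bounds shows $g'$ is integrable near $+\infty$; hence $g(x)$ converges to a finite limit $g_\infty \le 0$, and the lower bound forces $g_\infty < 0$, giving $|h(x)| \le C \eee^{-m_{n+1} x}$ for all $x \ge 0$ after adjusting the constant on the compact remaining range. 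This proves \eqref{eq:Hnn'-asym-bis} for $k = 0$.

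For the derivatives, I would argue inductively. The case $k = 1$ is immediate: $\partial_x H_{n,n+1}(x) = \sqrt{2W(H_{n,n+1}(x))} = m_{n+1}|h(x)|\sqrt{1 + O(h(x))} \le C \eee^{-m_{n+1} x}$. For $k \ge 2$, differentiating \eqref{eq:kink-eq} repeatedly expresses $\partial_x^k H_{n,n+1}$ as a polynomial in $H_{n,n+1} - \omega_{n+1}$ and its lower-order derivatives with coefficients built from derivatives of $W$ evaluated along the kink (alternatively, one can use $\partial_x^2 H = W'(H)$ from \eqref{eq:csf} and induct on that); since $W$ is $C^\infty$ and each factor is either bounded or already known to be $O(\eee^{-m_{n+1} x})$, and at least one factor of $H - \omega_{n+1}$ or a derivative is present in every term, the bound \eqref{eq:Hnn'-asym-bis} propagates to all $k$. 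The smoothness of $H_{n, n'}$ on all of $\bR$ is clear from \eqref{eq:G-def}: $G_n$ is smooth with nonvanishing derivative on $(\omega_n, \omega_{n+1})$, so its inverse is smooth.

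The main obstacle is obtaining the \emph{sharp} exponential rate (with the exact constant $m_{n'}$ rather than some nearby exponent) rather than a soft exponential bound; this requires the two-sided comparison and the convergence-of-$g$ argument above rather than a one-sided Gronwall estimate. Everything else is routine ODE asymptotics near a hyperbolic fixed point, and the reflection symmetry reduces the four cases ($n' = n \pm 1$, $x \to \pm\infty$) to the single computation carried out here.
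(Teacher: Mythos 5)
Your route is genuinely different from the paper's and, apart from one imprecise step, it works. The paper does not study the ODE dynamically at all: it uses the explicit representation $H_{n,n+1}=G_n^{-1}$ from \eqref{eq:G-def}, expands $(2W(y))^{-1/2}=(m_n(y-\omega_n))^{-1}+O(1)$ near the vacuum, integrates to get $G_n(\psi)=\frac{1}{m_n}\log(\psi-\omega_n)+O(1)$, and reads off the sharp rate for $k=0$ by inverting; the case $k=1$ then follows from \eqref{eq:kink-eq} and the local Lipschitz character of $\sqrt{2W}$, and $k\geq 2$ by induction, differentiating $W'(H_{n,n+1})$ — essentially your second option for the higher derivatives. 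Your hyperbolic-rest-point analysis (two-sided crude exponential bounds by comparison, then the substitution $h=\eee^{-m_{n+1}x}g$) reaches the same sharp exponent with more work; what it buys is independence from the closed-form inverse-function formula, so it would survive where no first-order Bogomolny reduction is available, while the paper's computation is shorter and produces the exact exponent in a single step. Your treatment of $k\geq 1$ and of smoothness agrees with the paper's.

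One step as written does not close: with the crude bounds you chose, $|h|\leq c_2\eee^{-m_{n+1}x/2}$, the estimate $|g'|\lesssim \eee^{m_{n+1}x}h^2$ only gives $|g'|\lesssim 1$ — bounded, not integrable — so ``together with the crude bounds, $g'$ is integrable'' fails exactly at this borderline. The fix is routine: take the comparison constant closer to $m_{n+1}$ (any crude exponent $m_{n+1}-\epsilon$ with $\epsilon<m_{n+1}/2$ makes $\eee^{m_{n+1}x}h^2$ integrable), or bootstrap once more (from $|g'|\lesssim 1$ one gets $|h|\lesssim x\,\eee^{-m_{n+1}x}$, hence $|g'|\lesssim x^2\eee^{-m_{n+1}x}$), or simply note $\frac{\vd}{\vd x}\log|g|=h'/h+m_{n+1}=O(h)$, which is integrable. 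Separately, your assertion that the lower bound $|h|\geq c_1\eee^{-2m_{n+1}x}$ forces $g_\infty<0$ does not follow (that bound is compatible with $g_\infty=0$), but this is harmless: only the boundedness of $g$, i.e.\ the upper bound, is needed for \eqref{eq:Hnn'-asym-bis}.
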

\begin{proof}
We consider the case $n' = n + 1$ and $x \leq 0$; the remaining cases are reduced
to this one using the symmetries of the problem.

Since $2W(y) = m_n^2 (y - \omega_n)^2 (1 + O(y - \omega_n))$ as $y > \omega_n$ and $y \to \omega_n$, we have $(2W(y))^{-1/2} = (m_n(y - \omega_n))^{-1} + O(1)$,
and \eqref{eq:G-def} yields
\begin{equation}
G_n(\psi) = \frac{1}{m_n}\log(y - \omega_n) + O(1), \qquad y > \omega_n,\ y \to \omega_n.
\end{equation}
Thus,
\begin{equation}
\frac{1}{m_n}\log(H_{n, n+1}(x) - \omega_n) = x + O(1), \qquad x \to -\infty,
\end{equation}
which implies \eqref{eq:Hnn'-asym} for $k = 0$.

The bound for $\partial_x H_{n, n+1}(x)$ follows from \eqref{eq:kink-eq}
and the fact that $\sqrt{2W(\psi)}$ is a locally Lipschitz function.
For $k \geq 2$, \eqref{eq:Hnn'-asym} is proved by induction,
differentiating $k-2$ times $W'(H_{n, n+1}(x))$ using the chain and Leibniz rules.
\end{proof}
\begin{proposition}
\label{prop:static}
All the finite-energy stationary solutions of \eqref{eq:csf} are
\begin{itemize}
\item the vacua $\phi(t, x) = \omega_n$ for some $n \in I$,
\item the kinks $\phi(t, x) = H_{n, n+1}(x - a)$ for some $n, n+1 \in I$ and $a \in \bR$,
\item the antikinks $\phi(t, x) = H_{n+1, n}(x - a)$ for some $n, n+1 \in I$ and $a \in \bR$.
\end{itemize}
\end{proposition}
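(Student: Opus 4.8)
The plan is to reduce to a phase-plane analysis of an autonomous ODE. A stationary solution is of the form $\phi(t,x) = \phi(x)$ with $\dot\phi \equiv 0$, so finite energy means exactly $E_p(\phi) < \infty$, and Proposition~\ref{prop:sectors} gives $\phi \in C(\bR)$ together with $\lim_{x\to-\infty}\phi(x) = \omega_m$, $\lim_{x\to+\infty}\phi(x) = \omega_n$ for some $m, n \in I$. Plugging $\phi(t,x) = \phi(x)$ into \eqref{eq:csf} gives the ODE $\phi'' = W'(\phi)$; since $\phi$ is continuous and $W' \in C^\infty$, bootstrapping this identity upgrades $\phi$ to a classical $C^\infty$ solution. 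Multiplying by $\phi'$ and integrating produces the first integral $\tfrac12(\phi'(x))^2 - W(\phi(x)) \equiv C$ for some constant $C \in \bR$.

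Next I would show $C = 0$. From $\tfrac12(\phi')^2 = C + W(\phi)$ we get $\int_\bR\big(C + 2W(\phi)\big)\ud x = \int_\bR\big(\tfrac12(\phi')^2 + W(\phi)\big)\ud x = E_p(\phi) < \infty$, and since $W \geq 0$ we also have $\int_\bR W(\phi)\ud x \leq E_p(\phi) < \infty$; hence $\int_\bR C\,\ud x < \infty$, which forces $C = 0$. Therefore $(\phi'(x))^2 = 2W(\phi(x))$, so that $\phi'$ vanishes at a point $x_0$ if and only if $\phi(x_0) \in W^{-1}(0) = \Omega$. I would then observe the dichotomy: either $\phi$ is constant, or $\phi'$ never vanishes. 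Indeed, if $\phi'(x_0) = 0$ then $\phi(x_0) = \omega$ for some $\omega \in \Omega$, and the constant function $\omega$ solves the same ODE with the same initial data $(\omega, 0)$ at $x_0$; as $W'$ is (locally) Lipschitz, ODE uniqueness gives $\phi \equiv \omega$. Constant solutions are exactly the vacua (any $\omega \in \Omega$ satisfies $W'(\omega) = 0$ by (A3)).

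It remains to treat a non-constant $\phi$, where $\phi'$ has a constant sign, so $\phi$ is strictly monotone and $\omega_m \neq \omega_n$. Suppose $\phi' > 0$. Then $\phi$ is a continuous increasing bijection from $\bR$ onto $(\omega_m, \omega_n)$, and by the intermediate value theorem it attains every value in between; if there were a vacuum $\omega_\ell$ with $\omega_m < \omega_\ell < \omega_n$, then $\phi'$ would vanish at the point where $\phi = \omega_\ell$, a contradiction. Hence $n = m+1$, so $W > 0$ on $(\omega_m, \omega_{m+1})$ and $\phi' = \sqrt{2W(\phi)}$ there; consequently $\tfrac{\ud}{\ud x}G_m(\phi(x)) = \phi'(x)/\sqrt{2W(\phi(x))} \equiv 1$, so $G_m(\phi(x)) = x - a$ for some $a \in \bR$, i.e. $\phi(x) = G_m^{-1}(x - a) = H_{m, m+1}(x - a)$ by \eqref{eq:G-def}. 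The case $\phi' < 0$ is entirely symmetric (or reduces to the previous one via $x \mapsto -x$) and yields the antikink $\phi(x) = H_{m, m-1}(x - a)$.

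The argument is elementary, and the only steps that require a moment's care are the vanishing of the first integral $C$ — this is precisely what rules out any other orbit structure in the phase plane, using the finiteness of $E_p$ in an essential way — and the legitimate invocation of ODE uniqueness, which is guaranteed by $W \in C^\infty$; everything else is monotonicity, the intermediate value theorem, and separation of variables, combined with the structural assumptions (A2)--(A3) on $\Omega$.
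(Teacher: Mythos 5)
Your proof is correct and follows essentially the same route as the paper: reduce to the ODE $\phi''=W'(\phi)$, show the first integral $\tfrac12(\phi')^2-W(\phi)$ vanishes because $E_p(\phi)<\infty$, and then analyse the resulting Bogomolny equation, with constancy as soon as a vacuum is attained and identification of the monotone solutions with $H_{n,n\pm1}(\cdot-a)$. The only cosmetic differences are that you fix the sign of $\phi'$ via continuity (rather than the paper's closed-disjoint-sets argument) and identify the kink by integrating $G_m(\phi(x))$ directly instead of invoking first-order ODE uniqueness; one trivial slip: $W'(\omega)=0$ for $\omega\in\Omega$ follows from (A1) together with $W(\omega)=0$ (interior minimum), not from (A3).
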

\begin{proof}
A stationary field $\phi(t, x) = \psi(x)$ is a solution of \eqref{eq:csf} if and only if
\begin{equation}
\label{eq:psi4}
\partial_x^2\psi(x) = W'(\psi(x)),\qquad\text{for all }x\in \bR.
\end{equation}
We seek solutions of \eqref{eq:psi4} such that $E_p(\psi) < \infty$,
in particular $\psi \in C(\bR)$, so \eqref{eq:psi4} and $W \in C^\infty(\bR)$ yield $\psi \in C^\infty(\bR)$.
Multiplying \eqref{eq:psi4} by $\partial_x \psi$ we get
\begin{equation}
\partial_x\Big(\frac 12 (\partial_x \psi)^2 - W(\psi)\Big)
= \partial_x \psi\big(\partial_x^2 \psi - W'(\psi)\big) = 0,
\end{equation}
so $\frac 12 (\partial_x \psi(x))^2 - W(\psi(x)) = k$ is a constant.
But then $E_p(\psi) < \infty$ implies $k = 0$.
We obtain first order autonomous equations, called the Bogomolny equations,
\begin{equation}
\label{eq:bogom}
\partial_x\psi(x) = \sqrt{2W(\psi(x))}\quad\text{or}\quad \partial_x\psi(x) = -\sqrt{2W(\psi(x))},
\quad\text{for all }x \in \bR.
\end{equation}

If there exists $x_0 \in \bR$ such that $\psi(x_0) = \omega \in \Omega$,
then $\psi(x) = \omega$ for all $x \in \bR$. We thus assume $\psi(x) \notin \Omega$ for all $x \in \bR$.

We now argue that the sign in \eqref{eq:bogom} is always the same.
Indeed, the smoothness of $\psi$ implies that the sets $\{x: \partial_x\psi(x) = \sqrt{2W(\psi(x))}\}$
and $\{x: \partial_x\psi(x) = -\sqrt{2W(\psi(x))}\}$ are closed,
and $\psi(x) \notin \Omega$ implies that they are disjoint. Thus one of them is empty.

We consider the case $\partial_x \psi(x) = \sqrt{2W(\psi(x))}$ for all $x \in \bR$,
the other case being analogous. In particular, $\psi$ is an increasing function.

If $\psi(0) = \psi_0 \in (\omega_n, \omega_{n+1})$ for some
$n, n+1 \in I$, then there exists $a \in \bR$ such that $H_{n, n+1}(a) = \psi_0$,
and the uniqueness of solutions of ODEs yields $\psi(x) = H_{n, n+1}(x - a)$ for all $x \in \bR$,
thus $\psi$ is a kink.

If $\psi(0) < \min\Omega$, then we could not have $\lim_{x \to -\infty}\psi(x) \in \Omega$,
since $\psi$ is increasing. Similarly, $\psi(0) > \max \Omega$ is impossible.
\end{proof}

\subsection{Coercivity near a static kink}
\label{ssec:coer-kink}
Let $n, n+1 \in I$ and let $\phi_0: \bR \to \bR$ be a state such that
$E_p(\phi_0) < \infty$, $\lim_{x \to -\infty}\phi_0(x) = \omega_n$
and $\lim_{x \to +\infty}\phi_0(x) = \omega_{n+1}$.
Letting $x_1 \to -\infty$ and $x_2 \to \infty$ in \eqref{eq:Gamma-ineq}, we obtain
\begin{equation}
\label{eq:bogom-coer}
E_p(\phi_0) \geq \Gamma(\omega_{n+1}) - \Gamma(\omega_n) = \int_{\omega_n}^{\omega_{n+1}} \sqrt{2W(y)}\ud y.
\end{equation}
Inspecting the proof of \eqref{eq:Gamma-ineq}, we see that equality holds if $\phi_0 = H_{n, n+1}$, so that
\begin{equation}
\label{eq:V-of-H}
E_p(H_{n, n+1}) = \int_{\omega_n}^{\omega_{n+1}} \sqrt{2W(y)}\ud y,\qquad E_p(\phi) \geq E_p(H_{n,n+1}).
\end{equation}
The case $\lim_{x \to -\infty}\phi(x) = \omega_{n+1}$
and $\lim_{x \to +\infty}\phi(x) = \omega_{n}$ is analogous, and we obtain \eqref{eq:ground-kink}.

Let $n, n' \in I$ such that $|n - n'| = 1$.
Writing $\phi = H_{n, n'} + v$, we have the Taylor expansion
\begin{equation}
E_p(v) = E_p(H_{n, n'}) + \frac 12\la v, L_{n, n'}v\ra,
\end{equation}
where (here and later) $\la \cdot, \cdot\ra$ denotes the $L^2(\bR)$ inner product and
\begin{equation}
\label{eq:Lnn'-def}
L_{n, n'} := -\partial_x^2 + W''(H_{n,n'}) = -\partial_x^2 + V_{n, n'},
\end{equation}
with $V_{n, n'}(x) := W''(H_{n,n'}(x))$. Note that $|V_{n, n'}(x) - m_n^2| \lesssim \eee^{m_n x}$ as $x \to -\infty$,
and $|V_{n, n'}(x) - m_{n'}^2| \lesssim \eee^{-m_{n'}x}$ as $x \to \infty$.

Spectral information on $L_{n, n'}$ is obtained using standard arguments.
For the convenience of the Reader, we state without proof a minor modification of Lemma~2.6 from \cite{BKMM}.
\begin{proposition}\cite[Lemma~2.6]{BKMM}
\label{prop:L-index}
The operator $L_{n, n'}$ defined by \eqref{eq:Lnn'-def}
has the following properties:
\begin{enumerate}
\item $L_{n, n'}$ is self-adjoint on $L^2(\bR)$, with domain $H^2(\bR)$,
\item $\ker(L_{n, n'}) = \tx{span}(\partial_x H_{n, n'})$ and $\tx{spec}(L) \subset \{0\} \cup [\lambda, +\infty)$ for some $\lambda > 0$,
\item for any $\cZ \in L^2(\bR)$ such that $\la \cZ, \partial_x H_{n,n'}\ra \neq 0$
there exists $\lambda_0 > 0$ such that
for all $g \in H^1(\bR)$
\begin{equation}
\label{eq:vLv-coer}
\la g, L_{n,n'}g\ra \geq \lambda_0 \|g\|_{H^1}^2 - \frac{1}{\lambda_0}\la \cZ, g\ra^2.
\end{equation}
\end{enumerate}
\qed
\end{proposition}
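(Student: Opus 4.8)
The statement to prove is Proposition~\ref{prop:L-index}, a standard spectral result about the linearized operator $L_{n,n'} = -\partial_x^2 + W''(H_{n,n'})$ around a kink. Here is how I would proceed.

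\emph{Self-adjointness and spectrum at infinity.} Since $H_{n,n'}$ is smooth and bounded with $W''(H_{n,n'}(x)) \to m_n^2, m_{n'}^2$ exponentially fast as $x \to \mp\infty$ (by Proposition~\ref{prop:decaykink}), the potential $V_{n,n'}$ is bounded, so $L_{n,n'}$ is a bounded perturbation of $-\partial_x^2$ and hence self-adjoint on $H^2(\bR)$. Because $V_{n,n'}$ converges at $\pm\infty$ to the positive constants $m_n^2, m_{n'}^2$, a Weyl sequence / relative-compactness argument shows $\tx{spec}_{\tx{ess}}(L_{n,n'}) = [\min(m_n^2, m_{n'}^2), \infty)$, so the spectrum below $\min(m_n^2,m_{n'}^2)$ consists of finitely many isolated eigenvalues of finite multiplicity.

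\emph{The kernel and positivity.} Differentiating the static equation \eqref{eq:psi4} satisfied by $H_{n,n'}$ gives $L_{n,n'}\partial_x H_{n,n'} = 0$, and $\partial_x H_{n,n'} \in L^2$ with exponential decay, so $\partial_x H_{n,n'}$ is an eigenfunction with eigenvalue $0$. Since $H_{n,n'}$ is strictly monotone, $\partial_x H_{n,n'}$ has no zeros, so by Sturm–Liouville theory it is the ground state of $L_{n,n'}$; hence $0 = \inf \tx{spec}(L_{n,n'})$, the kernel is exactly one-dimensional, $\ker L_{n,n'} = \tx{span}(\partial_x H_{n,n'})$, and every other spectral point is strictly positive. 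Combined with the description of the essential spectrum, this yields $\tx{spec}(L_{n,n'}) \subset \{0\} \cup [\lambda, \infty)$ for some $\lambda > 0$ (the next eigenvalue, or the bottom of the essential spectrum if there is no second eigenvalue).

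\emph{The coercivity estimate with a correction term.} For (3), decompose $g \in H^1(\bR)$ as $g = c\,\partial_x H_{n,n'} + g^\perp$ with $g^\perp \perp \partial_x H_{n,n'}$ in $L^2$. The spectral gap gives $\la g, L_{n,n'} g\ra = \la g^\perp, L_{n,n'} g^\perp\ra \geq \lambda \|g^\perp\|_{L^2}^2$, and a standard argument upgrading the $L^2$-gap coercivity to an $H^1$-bound (using $\la g^\perp, L g^\perp\ra \geq \tfrac12 \la g^\perp, L g^\perp\ra + \tfrac{\lambda}{2}\|g^\perp\|_{L^2}^2$ and $\la g^\perp, L g^\perp\ra \geq \|\partial_x g^\perp\|_{L^2}^2 - \|V\|_\infty \|g^\perp\|_{L^2}^2$) yields $\la g, L_{n,n'} g\ra \geq \mu \|g^\perp\|_{H^1}^2$ for some $\mu > 0$. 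Then $\|g\|_{H^1}^2 \lesssim |c|^2 \|\partial_x H_{n,n'}\|_{H^1}^2 + \|g^\perp\|_{H^1}^2$, and since $\la \cZ, \partial_x H_{n,n'}\ra \neq 0$ we can write $c\,\la\cZ,\partial_x H_{n,n'}\ra = \la \cZ, g\ra - \la \cZ, g^\perp\ra$, so $|c| \lesssim |\la \cZ, g\ra| + \|g^\perp\|_{L^2}$; squaring and absorbing the $\|g^\perp\|$ terms into the coercive part, with an appropriate choice of $\lambda_0$, gives \eqref{eq:vLv-coer}.

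\emph{Main obstacle.} None of the steps is deep; the only point requiring care is confirming that the essential spectrum starts exactly at $\min(m_n^2, m_{n'}^2) > 0$ despite the asymmetry of the two limits of $V_{n,n'}$ at $\pm\infty$ — this is where the exponential decay rates from Proposition~\ref{prop:decaykink} are used — and making the constant-tracking in the final coercivity estimate clean. Since the result is quoted from \cite[Lemma~2.6]{BKMM}, in practice one would cite it and only indicate the modifications needed for the non-degenerate-but-unequal masses $m_n \neq m_{n'}$.
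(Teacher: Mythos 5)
The paper does not prove this proposition at all: it is quoted from \cite[Lemma~2.6]{BKMM} and stated without proof, so there is nothing to compare line by line. Your sketch is the standard self-contained argument and is essentially correct: self-adjointness as a bounded perturbation of $-\partial_x^2$; essential spectrum $[\min(m_n^2,m_{n'}^2),\infty)$ (for this, mere convergence of $V_{n,n'}$ at $\pm\infty$ suffices -- the exponential rates of Proposition~\ref{prop:decaykink} are not really needed); $L_{n,n'}\partial_x H_{n,n'}=0$ by differentiating \eqref{eq:psi4}, with $\partial_x H_{n,n'}>0$ forcing $0$ to be the simple ground-state eigenvalue; then the spectral gap plus the decomposition $g=c\,\partial_x H_{n,n'}+g^\perp$ gives \eqref{eq:vLv-coer}. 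One small quantitative slip: in the upgrade from the $L^2$ gap to $H^1$ coercivity, the equal-weight splitting you wrote, namely combining $\la g^\perp,L g^\perp\ra\ge \tfrac12\la g^\perp,Lg^\perp\ra+\tfrac{\lambda}{2}\|g^\perp\|_{L^2}^2$ with $\la g^\perp,Lg^\perp\ra\ge\|\partial_x g^\perp\|_{L^2}^2-\|V\|_{L^\infty}\|g^\perp\|_{L^2}^2$, only yields a positive-definite lower bound when $\lambda>\|V\|_{L^\infty}$, which need not hold (here $V=W''(H_{n,n'})$ changes sign, and $\lambda$ may be small). The fix is the usual interpolation with a small parameter $\theta$: $\la g^\perp,Lg^\perp\ra\ge\theta\big(\|\partial_x g^\perp\|_{L^2}^2-\|V\|_{L^\infty}\|g^\perp\|_{L^2}^2\big)+(1-\theta)\lambda\|g^\perp\|_{L^2}^2$ with $\theta$ chosen so that $(1-\theta)\lambda-\theta\|V\|_{L^\infty}>0$. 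With that one-line correction, your argument goes through and matches what the cited reference does.
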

\subsection{Coercivity near a moving kink}
The Lorentz transformation corresponding to the velocity $v\in (-1,1)$
followed by a shift in the $x$ direction by $a\in \bR$ is given by
\begin{equation}
\label{eq:lorentz-1}
\begin{aligned}
x'&=\gamma(x-a-vt),\\
t'&=\gamma(t-v(x-a)),
\end{aligned}
\qquad \gamma:=(1-v^2)^{-1/2}.
\end{equation}
In the sequel, given space-time variables $(t,x)$, we will use $(t',x')$ to denote their transform as above.
The inverse transform is
\begin{equation}
\begin{aligned}
x&=a+\gamma(x'+vt'),\\
t&=\gamma(t'+vx').
\end{aligned}
\end{equation}

Let $n, n'$ satisfy $|n - n'| = 1$, $-1 < v < 1$ and $a \in \bR$.
We would like to understand the linearised flow around the moving kink solution
\begin{equation}
\phi(t, x) := H_{n, n'}(x') = H_{n, n'}(\gamma(x - vt - a)).
\end{equation}

It is convenient to formulate \eqref{eq:csf} and its linearisation as first order in time systems.
If $\bs \phi = (\phi, \dot \phi)$, then \eqref{eq:csf} can be written as
\begin{equation}
\label{eq:csf-ham}
\partial_t \bs \phi(t, x) = \bs J \vD E(\bs \phi(t, x))
\end{equation}
where
\begin{equation}
\bs J:=\begin{pmatrix}
0 & 1\\
-1 & 0\end{pmatrix}, \qquad \vD E(\bs \phi) := \begin{pmatrix}
-\partial_{x}^2\phi + W'\left(\phi\right) \\
\dot{\phi}
\end{pmatrix}
\end{equation}
are the standard symplectic matrix and the Fr\'echet derivative of the energy.
The linearisation around the moving kink $H_{n,n'}(\gamma(x-vt-a))$ is
\begin{equation}
\label{eq:csf-ham-one-lin}
\partial_t \bs h(t, x) =\bs J\bs L_{v,n,n'}\left(vt+a\right)\bs h(t, x)
\end{equation}
where
\begin{equation}
\bs L_{v,n,n'}(a)=\begin{pmatrix}
-\partial_{x}^2+V_{n,n'}(\gamma(\cdot-a)) & 0\\
0 & 1
\end{pmatrix}.
\end{equation}

%

The one-dimensional kernel of $L_{n,n'}$ induces a
two-dimensional iterated kernel of $\bs L_{v, n, n'}$. We define
\begin{align}\label{eq:movingmodes}
Y_{n,n'}^{0}(v; x) &:= \left(\partial_x H_{n, n'}(\gamma x), -\gamma v\partial_{x}^2 H_{n,n'}(\gamma x)\right),\\
Y_{n,n'}^{1}(v; x) &:=\left(-vx\partial_x H_{n,n'}(\gamma x),\gamma\partial_x H_{n,n'}(\gamma x)+\gamma v^{2}x\partial_{x}^2 H_{n,n'}(\gamma x)\right),\\
\psi_{n, n'}^{0}(v; x) &:= \bs J Y_{n,n'}^{0}(v;x), \\
\psi_{n, n'}^{1}(v; x) &:= \bs J Y_{n,n'}^{1}(v;x).
\end{align}


The significance of these objects for the dynamics of \eqref{eq:csf-ham-one-lin}
is explained by the next lemma, which we state without proof.
\begin{lemma}
	\label{lem:kg-one-comp1}
	The following functions are solutions of \eqref{eq:csf-ham-one-lin}:
	\begin{align}
	\bs h(t, x) &= Y_{n, n'}^{0}(v; x -v t-a ),\label{eq:part-sol-3} \\
	\bs h(t, x) &= Y_{n, n'}^{1}(v; x -v t-a ) + \gamma((1 - v^2)t - va)Y_{n, n'}^{0}(v; x - v t - a ). \label{eq:part-sol-4}
	\end{align}
	If $\bs h(t, x)$ is any solution of \eqref{eq:csf-ham-one-lin}, then
	\begin{align}
	\dd t\la \psi_{n, n'}^{0}(v;\cdot - v t-a ), \bs h(t)\ra &= 0,\label{eq:covector-3}  \\
	\dd t\la \psi_{n, n'}^{1}(v;\cdot - v t-a ), \bs h(t)\ra &= -\frac{1}{\gamma}\la \psi_{n, n'}^{0}(v; \cdot - vt - a ), \bs h(t)\ra.\label{eq:covector-4}
	\end{align}
\end{lemma}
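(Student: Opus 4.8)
The plan is to obtain the explicit solutions in the first half of the lemma from the symmetries of \eqref{eq:csf}, and then to read off the covector identities from the symplectic structure of \eqref{eq:csf-ham-one-lin}. The point is that the two families in \eqref{eq:part-sol-3}--\eqref{eq:part-sol-4} are (combinations of) symmetry‑generated solutions. Differentiating the two‑parameter family of exact solutions $(t,x)\mapsto H_{n,n'}\big(\gamma_w(x-wt-b)\big)$ of \eqref{eq:csf} with respect to the shift $b$, at $(w,b)=(v,a)$, gives a solution of \eqref{eq:csf-ham-one-lin}; a chain‑rule computation using $\partial_t t'=\gamma$, $\partial_t x'=-\gamma v$ for the boost \eqref{eq:lorentz-1} identifies it with \eqref{eq:part-sol-3} and the profile $Y^0_{n,n'}$ of \eqref{eq:movingmodes}. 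The $w$‑derivative at $(w,b)=(v,a)$, corrected by suitable multiples of the $b$‑derivative and of the time‑translation solution, gives \eqref{eq:part-sol-4}; equivalently one may Lorentz‑transform, via \eqref{eq:lorentz-1}, the solutions $\bs h(t',x')=(\partial_{x'}H_{n,n'}(x'),0)$ and $\bs h(t',x')=\big(t'\,\partial_{x'}H_{n,n'}(x'),\,\partial_{x'}H_{n,n'}(x')\big)$ of the linearisation around the \emph{static} kink, which are solutions because $\partial_x H_{n,n'}$ spans $\ker L_{n,n'}$ (Proposition~\ref{prop:L-index}), so $L_{n,n'}\big((\alpha+\beta t')\partial_x H_{n,n'}\big)=0$. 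One may instead verify everything by hand: writing $\bs h=(h,\partial_t h)$, the system \eqref{eq:csf-ham-one-lin} is equivalent to $\partial_t^2 h-\partial_x^2 h+W''\big(H_{n,n'}(\gamma(x-vt-a))\big)h=0$, and substituting the stated $h$ and using the identity $\partial_\xi^3 H_{n,n'}=W''(H_{n,n'})\,\partial_\xi H_{n,n'}$ (which is exactly $L_{n,n'}\partial_x H_{n,n'}=0$, or follows from differentiating \eqref{eq:kink-eq} and \eqref{eq:psi4}) everything collapses, with $\gamma^2(1-v^2)=1$ absorbing the Lorentz factors. The coefficient growing linearly in $t$ in \eqref{eq:part-sol-4} reflects the boost direction being a \emph{generalised} rather than genuine zero mode.

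For the covector identities I would first extract the evolution of the profiles themselves. Abbreviate $\bs L:=\bs L_{v,n,n'}(vt+a)$, and $Y^\iota(t):=Y^\iota_{n,n'}(v;\cdot-vt-a)$, $\psi^\iota(t):=\psi^\iota_{n,n'}(v;\cdot-vt-a)=\bs J\,Y^\iota(t)$ for $\iota\in\{0,1\}$. Statement \eqref{eq:part-sol-3} says precisely $\partial_t Y^0(t)=\bs J\bs L\,Y^0(t)$. Differentiating \eqref{eq:part-sol-4} in $t$ and cancelling the part proportional to this solution gives $\partial_t Y^1(t)=\bs J\bs L\,Y^1(t)-\tfrac1\gamma Y^0(t)$, where $\tfrac1\gamma$ is the $t$‑derivative of the coefficient $\gamma((1-v^2)t-va)$, since $\gamma(1-v^2)=\gamma^{-1}$. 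Applying $\bs J$ and using $\bs J^2=-\Id$ this becomes $\partial_t\psi^0(t)=-\bs L\,Y^0(t)$ and $\partial_t\psi^1(t)=-\bs L\,Y^1(t)-\tfrac1\gamma\psi^0(t)$.

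Now let $\bs h$ solve \eqref{eq:csf-ham-one-lin}, i.e.\ $\partial_t\bs h=\bs J\bs L\bs h$. Then, for $\iota\in\{0,1\}$,
\[ \dd t\,\la\psi^\iota(t),\bs h(t)\ra=\la\partial_t\psi^\iota(t),\bs h(t)\ra+\la\bs J\,Y^\iota(t),\,\bs J\bs L\bs h(t)\ra . \]
In the last term I move $\bs J$ to the other slot using its orthogonality, $\bs J^{\top}\bs J=\Id$, obtaining $\la Y^\iota(t),\bs L\bs h(t)\ra$, and then move $\bs L$ across by self‑adjointness (the rescaled potential $V_{n,n'}(\gamma(\cdot-vt-a))$ is bounded, so $\bs L$ is self‑adjoint as in Proposition~\ref{prop:L-index}), obtaining $\la\bs L\,Y^\iota(t),\bs h(t)\ra$. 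For $\iota=0$ this exactly cancels $\la\partial_t\psi^0(t),\bs h(t)\ra=-\la\bs L\,Y^0(t),\bs h(t)\ra$, giving \eqref{eq:covector-3}; for $\iota=1$ it cancels the $-\bs L\,Y^1(t)$ part of $\partial_t\psi^1(t)$ and leaves $-\tfrac1\gamma\la\psi^0(t),\bs h(t)\ra$, which is \eqref{eq:covector-4}. The integrations by parts are legitimate because the components of $Y^0_{n,n'}$, $Y^1_{n,n'}$ and their $x$‑derivatives decay exponentially (Proposition~\ref{prop:decaykink}), while $\bs h(t)\in H^1\times L^2$.

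The only genuinely delicate step is the bookkeeping behind \eqref{eq:part-sol-4}: one must track at once the Lorentz factors, the travelling‑frame substitution $x\mapsto x-vt-a$, and the fact that the boost mode differs from the naive velocity‑derivative by a multiple of $Y^0$. The conceptual skeleton — symmetries generate the (generalised) kernel, and $\psi^0,\psi^1$ are its symplectic duals — makes both the profile equations and the covector identities essentially forced once that computation is done.
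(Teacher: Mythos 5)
Your overall strategy is sound, and it is in fact more informative than the paper, which does not prove this lemma at all but simply cites \cite[Lemma 4.3]{CJ1-19}. The second half of your argument is complete and correct: once one knows that $Y^0(t):=Y^0_{n,n'}(v;\cdot-vt-a)$ solves \eqref{eq:csf-ham-one-lin} and that $Y^1(t)+\beta(t)Y^0(t)$ with $\beta(t)=\gamma((1-v^2)t-va)$ solves it, your differentiation gives $\partial_t Y^1=\bs J\bs L Y^1-\gamma^{-1}Y^0$, and the passage to $\psi^\iota=\bs J Y^\iota$ together with $\bs J^\top\bs J=\Id$, $\bs J^2=-\Id$ and the self-adjointness of $\bs L$ yields \eqref{eq:covector-3}--\eqref{eq:covector-4} exactly as you say; the decay of the profiles justifies the formal manipulations.

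The one place where your write-up has a real gap is the claim that for \eqref{eq:part-sol-4} ``everything collapses'' upon substitution: with $Y^1_{n,n'}$ \emph{as printed in} \eqref{eq:movingmodes} it does not. Writing $\xi=x-vt-a$, the first row of the system \eqref{eq:csf-ham-one-lin} demands that the second component equal $\partial_t$ of the first, and for the printed candidate one finds
\begin{equation}
\partial_t\big({-}v\xi\,\partial_xH_{n,n'}(\gamma\xi)+\beta(t)\partial_xH_{n,n'}(\gamma\xi)\big)-\big(\gamma\,\partial_xH_{n,n'}(\gamma\xi)+\gamma v^2\xi\,\partial_x^2H_{n,n'}(\gamma\xi)-\gamma v\beta(t)\partial_x^2H_{n,n'}(\gamma\xi)\big)
=v^2(1-\gamma)\,\partial_xH_{n,n'}(\gamma\xi),
\end{equation}
which vanishes only for $v=0$. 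Your own symmetry derivation detects this: Lorentz-transforming $t'\partial_{x'}H_{n,n'}(x')$, or differentiating $H_{n,n'}(\gamma_w(x-wt-a))$ in $w$, produces a first component $\gamma\big((1-v^2)t-v\xi\big)\partial_xH_{n,n'}(\gamma\xi)$, i.e.\ the profile $\big({-}\gamma v x\,\partial_xH_{n,n'}(\gamma x),\ \gamma\,\partial_xH_{n,n'}(\gamma x)+\gamma^2v^2x\,\partial_x^2H_{n,n'}(\gamma x)\big)$, with an extra factor $\gamma$ on the $x$-dependent terms compared with \eqref{eq:movingmodes}. So the lemma is true with that normalisation of $Y^1$ (and your covector argument goes through verbatim, since only the structural identities $\partial_tY^0=\bs J\bs L Y^0$, $\partial_tY^1=\bs J\bs LY^1-\gamma^{-1}Y^0$ are used), but you should either carry out the bookkeeping you yourself flag as the delicate step and correct the normalisation of $Y^1$, or explicitly note the discrepancy with \eqref{eq:movingmodes}, rather than asserting that the substitution closes as stated.
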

\begin{proof} See \cite[Lemma 4.3]{CJ1-19}.
Here, $\la\cdot, \cdot \ra $ denotes the inner product in $L^2(\bR) \times L^2(\bR)$.
\end{proof}




The quadratic form associated to the linear equation \eqref{eq:csf-ham-one-lin} above is
\[
Q_{v,n,n'}\left(a;\bm{h}_{0},\bm{h}_{0}\right)=\frac{1}{2}\int\left((\dot{h}_{0})^{2}+2v\dot{h}_{0}\partial_{x}h_{0}+(\partial_{x}h_{0})^{2}+V_{n,n'}\left(\gamma\left(\cdot-a\right)\right)\right)h_{0}^{2}\,dx.
\]
By the Lorentz transform, we can translate Proposition \ref{prop:L-index} to a coercivity property around $\bs L_{v,n,n'}$,
which yields the following result.
\begin{proposition}\label{prop:coermoving}
	For any $n, n'$ such that $|n - n'| = 1$ and $-1 < v < 1$, there exists $\lambda_0>0$ such that for
	all $\bm{h}_{0}$ and $a \in\mathbb{R}$ the following bound holds:
	\begin{align*}
	Q_{v,n,n'}\left(a;\bm{h}_{0},\bm{h}_{0}\right) \geq \lambda_0\left\Vert \bm{h}_{0}\right\Vert _{\cE}^{2}
	-\frac{1}{\lambda_0}\left(\left\langle \psi_{n,n'}^{0}\left(v;\cdot-a\right),\bm{h}_{0}\right\rangle ^{2}+\left\langle \psi_{n,n'}^{1}\left(v;\cdot-a\right),\bm{h}_{0}\right\rangle ^{2}\right).
	\end{align*}
\end{proposition}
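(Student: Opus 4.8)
The plan is to reduce the statement to the coercivity of $L_{n,n'}$ from Proposition \ref{prop:L-index} by undoing the Lorentz boost. First I would dispose of the shift parameter $a$: the quadratic form $Q_{v,n,n'}(a;\cdot,\cdot)$ is obtained from $Q_{v,n,n'}(0;\cdot,\cdot)$ by the translation $\bm h_0(\cdot) \mapsto \bm h_0(\cdot + a)$, and the covector pairings $\langle \psi_{n,n'}^{j}(v;\cdot - a), \bm h_0\rangle$ transform in the same way, while the $\cE$-norm is translation invariant. Hence it suffices to treat $a = 0$, and from now on I write $Q_{v}$, $L = L_{n,n'}$, $\psi^j = \psi_{n,n'}^j(v;\cdot)$, $Y^j = Y_{n,n'}^j(v;\cdot)$.

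Next I would introduce the boost. Writing $\bm h_0 = (h_0, \dot h_0)$, complete the square in $\dot h_0$:
\begin{equation*}
Q_v(0;\bm h_0,\bm h_0) = \frac12\int \big((\dot h_0 + v\partial_x h_0)^2 + (1-v^2)(\partial_x h_0)^2 + V_{n,n'}(\gamma\,\cdot)\, h_0^2\big)\,dx.
\end{equation*}
Now change variables $x = \gamma^{-1} y$ and set $g(y) := h_0(\gamma^{-1} y)$, $p(y) := (\dot h_0 + v\partial_x h_0)(\gamma^{-1} y)$, so that $p$ and $g$ are independent $L^2$, resp. $\dot H^1$, functions and
\begin{equation*}
Q_v(0;\bm h_0,\bm h_0) = \frac{\gamma^{-1}}{2}\int p(y)^2\,dy + \frac{\gamma^{-1}}{2}\int \big((\partial_y g)^2 + V_{n,n'}(y)\, g^2\big)\,dy = \frac{\gamma^{-1}}{2}\|p\|_{L^2}^2 + \frac{\gamma^{-1}}{2}\langle g, L g\rangle,
\end{equation*}
where I used $(1-v^2)\gamma^2 = 1$. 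The $L^2$ part is already coercive; for the $L$ part I apply item (3) of Proposition \ref{prop:L-index} with a convenient choice of $\cZ$, e.g. $\cZ = \partial_x H_{n,n'}$, obtaining $\langle g, Lg\rangle \ge \lambda_0'\|g\|_{H^1}^2 - \tfrac{1}{\lambda_0'}\langle \partial_x H_{n,n'}, g\rangle^2$. Undoing the change of variables, $\|g\|_{H^1}$ and $\|p\|_{L^2}$ together control $\|\bm h_0\|_{\cE}$ (with constants depending on $v$, which is allowed), and $\langle \partial_x H_{n,n'}, g\rangle$ is, up to a nonzero factor $\gamma^{-1}$ and a linear combination, one of the pairings $\langle Y^0, \bm h_0\rangle$, $\langle Y^1, \bm h_0\rangle$.

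The one genuinely non-routine point — and the step I expect to cost the most care — is identifying the bad direction $\langle \partial_x H_{n,n'}, g\rangle$ with the pairings against $\psi^0, \psi^1$ that appear in the statement. Here one must unwind the definitions \eqref{eq:movingmodes}: $\psi^j = \bm J Y^j$, and after the boost the two pairings $\langle \psi^0(v;\cdot), \bm h_0\rangle$ and $\langle \psi^1(v;\cdot), \bm h_0\rangle$ span, in the $(p, g)$ variables, the same two-dimensional space as $\langle \partial_x H_{n,n'}, p\rangle$ and $\langle \partial_x H_{n,n'}, g\rangle$ (the $\dot H^1$-pairing $\langle \partial_x H_{n,n'}, g\rangle$ being interpreted via the $L^2$-pairing with $\partial_x^2 H_{n,n'}$, or one simply works with the two $L^2$-pairings $\langle \partial_x H_{n,n'},\partial_x g\rangle$ and $\langle \partial_x^2 H_{n,n'}, g\rangle$ throughout, which is cleaner). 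One checks the $2\times 2$ change-of-basis matrix between $\{\langle\psi^0,\bm h_0\rangle,\langle\psi^1,\bm h_0\rangle\}$ and the natural pair of $L^2$-pairings is invertible for $|v|<1$; then $\langle g, Lg\rangle$ being bounded below modulo $\langle \partial_x H_{n,n'}, g\rangle^2$ implies it is bounded below modulo $\langle\psi^0,\bm h_0\rangle^2 + \langle\psi^1,\bm h_0\rangle^2$, and adjusting $\lambda_0$ finishes the proof. Everything else (translation invariance, completing the square, the change of variables, absorbing $v$-dependent constants) is bookkeeping.
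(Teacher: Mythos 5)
Your reductions are fine: passing to $a=0$, completing the square, the change of variables $y=\gamma x$ with $g(y)=h_0(\gamma^{-1}y)$, $p(y)=(\dot h_0+v\partial_xh_0)(\gamma^{-1}y)$, and the identity $Q_{v,n,n'}(0;\bm h_0,\bm h_0)=\tfrac{\gamma^{-1}}{2}\big(\|p\|_{L^2}^2+\la g,L_{n,n'}g\ra\big)$ are all correct (for the record, the paper does not prove this proposition itself; it cites \cite[Lemma~4.4]{CJ1-19} and \cite[Proposition~1]{CoMu2014}). The gap is precisely at the step you single out as delicate. With the convention $\partial_xH_{n,n'}(\gamma x)=(\partial_xH_{n,n'})(\gamma x)$ (the one under which Lemma~\ref{lem:kg-one-comp1} holds), writing $H'=\partial_xH_{n,n'}$ one computes $\la\psi^0_{n,n'}(v;\cdot),\bm h_0\ra=-2v\la H'',g\ra-\gamma^{-1}\la H',p\ra$ and $\la\psi^1_{n,n'}(v;\cdot),\bm h_0\ra=(1+v^2\gamma^{-1})\la H',g\ra+2v^2\gamma^{-1}\la yH'',g\ra+v\gamma^{-2}\la yH',p\ra$. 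Because of the linear weights $x\partial_xH_{n,n'}$ and $x\partial_x^2H_{n,n'}$ in $Y^1_{n,n'}$, these two functionals do \emph{not} lie in the span of $\la H',g\ra$ and $\la H',p\ra$: the weighted pairings $\la yH'',g\ra$ and $\la yH',p\ra$ are independent functionals of $(g,p)$. So the invertible $2\times2$ change-of-basis matrix you invoke does not exist. Nor can the weighted terms be dismissed as errors: they are of size $\|g\|_{L^2}$, $\|p\|_{L^2}$ with fixed, non-small constants, so bounding $\la H',g\ra^2$ by the two projections plus such terms and inserting this into $\la g,Lg\ra\ge\lambda_0'\|g\|_{H^1}^2-\tfrac1{\lambda_0'}\la H',g\ra^2$ produces a contribution proportional to $\|p\|_{L^2}^2$ with a constant that need not be smaller than the available $\tfrac{\gamma^{-1}}2\|p\|_{L^2}^2$; the absorption does not close.

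The statement is nevertheless provable along your lines with one further idea. Decompose $g=\alpha H'+g_1$ with $\la H',g_1\ra=0$; since $LH'=0$, Proposition~\ref{prop:L-index} with $\cZ=H'$ gives $\la g,Lg\ra=\la g_1,Lg_1\ra\ge\lambda_1\|g_1\|_{H^1}^2$ with nothing subtracted. Then use the $\psi^1$ pairing: because $\la yH'',H'\ra=-\tfrac12\|H'\|_{L^2}^2$, the coefficient of $\alpha$ in $\la\psi^1_{n,n'}(v;\cdot),\bm h_0\ra$ is exactly $\|H'\|_{L^2}^2\neq0$, whence $|\alpha|\lesssim|\la\psi^1_{n,n'}(v;\cdot),\bm h_0\ra|+\|g_1\|_{L^2}+\|p\|_{L^2}$. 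This suffices even though the constants are not small, because the direction of the inequality is favourable: $\|p\|_{L^2}^2+\lambda_1\|g_1\|_{H^1}^2\lesssim Q$, hence $\|\bm h_0\|_\cE^2\lesssim\alpha^2+\|g_1\|_{H^1}^2+\|p\|_{L^2}^2\lesssim\la\psi^1_{n,n'}(v;\cdot),\bm h_0\ra^2+Q$, which rearranges into the stated bound after adding the harmless $\la\psi^0_{n,n'}(v;\cdot),\bm h_0\ra^2$ term. Note where the nondegeneracy really lives: the boosted form has one-dimensional kernel $g=H'$, $p=0$, the functional $\la\psi^0_{n,n'}(v;\cdot),\bm h_0\ra$ vanishes on it, and it is only the weighted term in $\psi^1$ that is transverse to it; your plan of controlling $\la H',g\ra$ directly from the projections via a change of basis misses exactly this mechanism.
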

\begin{proof}
For a proof in a slightly more general setting, see \cite[Lemma 4.4]{CJ1-19}, or the earlier work \cite[Proposition 1]{CoMu2014} for a different proof of a very similar result.
\end{proof}

\subsection{Coercivity near a multikink}
\label{ssec:coercivity}

In this subsection, we collect the coercivity properties for the linear operator around a multikink. Let $K \in \bN$,
$\bs n$ be a chain of vacua, $\bs v \in S^{(K)}$ and $\bs a \in \bR^K$. We set $\gamma_j := (1 - v_j^2)^{-1/2}$ for $1 \leq j \leq K$.
In general, various constants in the estimates below depend on $\bs v$ and $\bs a$,
but can be chosen uniformly for $(\bs v, \bs a) \in A \ssubset S^{(K)}\times \bR^K$.

We are interested in  the linear equation
\begin{equation}\label{eq:multikink}
\partial_t \bm{h}(t, x)=\bs J\bs L(\bs v, \bs a; t, x)\bm{h}(t, x),
\end{equation}
where
\begin{equation}
\label{eq:bsL-def}
\bs L(\bs v, \bs a; t, x)=\begin{pmatrix}
-\partial_{x}^2 + V(\bs v, \bs a; t, x) & 0 \\
0 & 1
\end{pmatrix}
\end{equation}
with
\begin{equation}\label{eq:V(t)}
V(\bs v, \bs a; t, x) := V_{n_0, n_1}(\gamma_1(x -v_1 t - a_1))
+ \sum_{j=1}^{K-1}\big(V_{n_j, n_{j+1}}(\gamma_{j+1}(x -v_{j+1} t - a_{j+1})) - m_{n_j}^2\big).
\end{equation}

Using notations from \eqref{eq:movingmodes}, for $1 \leq j \leq K$ we define
\[
Y_{j}^{0}(t, x):=Y_{n_{j-1},n_j}^{0}\left(v_j;x-v_{j}t-a_j\right),\qquad Y_{j}^{1}(t, x):=Y_{n_{j-1},n_j}^{1}\left(v_j;x-v_{j}t-a_j\right),
\]
and
\[
\psi_{j}^{0}(t, x):=\bs JY_{j}^{0}(t, x), \qquad \psi_{j}^{1}(t, x):=\bs JY_{j}^{1}(t, x).
\]


Lemma~\ref{lem:kg-one-comp1}, combined with the exponential decay from Proposition~\ref{prop:decaykink}, leads to the following result.
\begin{proposition}\label{prop:msol}
For any chain of vacua $\bs n$, $\bs v \in S^{(K)}$ and $\bs a \in \bR^K$
there exist $T_0 \in \bR $ and $\eta > 0$ such that if $\bm{h}(t, x)$ is a solution of \eqref{eq:multikink},
then for all $t \geq T_0$
	\begin{equation}
\label{eq:zero-mode-0}
	\Big|\dd t\left\langle \psi_{j}^{0}(t),\bm{h}(t)\right\rangle\Big| \lesssim e^{-\eta t}\left\Vert \bm{h}(t)\right\Vert _{\cE},
	\end{equation}
	\begin{equation}
\label{eq:zero-mode-1}
	\Big|\dd t\left\langle \psi_{j}^{1}(t),\bm{h}(t)\right\rangle +\frac{1}{\gamma_{j}}\left\langle \psi_{j}^{0}(t),\bm{h}(t)\right\rangle \Big|\lesssim e^{-\eta t}\left\Vert \bm{h}(t)\right\Vert _{\cE}.
	\end{equation}
\end{proposition}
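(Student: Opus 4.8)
The plan is to reduce Proposition~\ref{prop:msol} to the exact identities of Lemma~\ref{lem:kg-one-comp1} plus quantitative control of the interaction between different kinks. Fix $j \in \{1, \ldots, K\}$ and consider a solution $\bm h$ of the multikink linear equation \eqref{eq:multikink}. The key observation is that, near the $j$-th kink, the potential $V(\bs v, \bs a; t, x)$ differs from the single moving-kink potential $V_{n_{j-1}, n_j}(\gamma_j(x - v_j t - a_j))$ only by a remainder built from the other kinks, which is exponentially small where $\psi_j^0(t, \cdot)$ and $\psi_j^1(t, \cdot)$ are concentrated, because the kinks travel at distinct velocities and hence separate linearly in $t$.

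First I would write $\bs L(\bs v, \bs a; t, x) = \bs L_{v_j, n_{j-1}, n_j}(v_j t + a_j) + \bs R_j(t, x)$, where the remainder $\bs R_j$ is the diagonal matrix with $(1,1)$-entry $V(\bs v, \bs a; t, x) - V_{n_{j-1}, n_j}(\gamma_j(x - v_j t - a_j))$ and $(2,2)$-entry $0$. From \eqref{eq:V(t)} this entry is a sum over $l \neq j$ of terms $V_{n_{l-1}, n_l}(\gamma_l(x - v_l t - a_l)) - m^2_{\ast}$, each of which decays like $\eee^{-m_\ast|x - v_l t - a_l|}$ by the remarks following \eqref{eq:Lnn'-def} (consequences of Proposition~\ref{prop:decaykink}). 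Then $\bm h$ satisfies $\partial_t \bm h = \bs J \bs L_{v_j, n_{j-1}, n_j}(v_j t + a_j)\bm h + \bs J \bs R_j(t)\bm h$, so it is a solution of the \emph{single}-kink linear equation \eqref{eq:csf-ham-one-lin} up to the source term $\bs J \bs R_j(t)\bm h$. Feeding this into the proof of Lemma~\ref{lem:kg-one-comp1} (i.e.\ differentiating $\langle \psi_j^0(t), \bm h(t)\rangle$ and $\langle \psi_j^1(t), \bm h(t)\rangle$ and using that $\psi_j^0, \psi_j^1$ solve the adjoint single-kink equation as encoded by \eqref{eq:covector-3}--\eqref{eq:covector-4}), the identities \eqref{eq:covector-3}--\eqref{eq:covector-4} acquire exactly the extra contributions $\langle \psi_j^0(t), \bs J \bs R_j(t)\bm h(t)\rangle$ and $\langle \psi_j^1(t), \bs J \bs R_j(t)\bm h(t)\rangle$ (up to signs from integrating against $\bs J$). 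This immediately gives \eqref{eq:zero-mode-0} and \eqref{eq:zero-mode-1} provided these correction terms are $\lesssim \eee^{-\eta t}\|\bm h(t)\|_{\cE}$.

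The estimate of $|\langle \psi_j^0(t), \bs J \bs R_j(t)\bm h(t)\rangle|$ is the only real point. By Cauchy--Schwarz it is bounded by $\|\bs R_j(t) \psi_j^0(t)\|_{L^2 \times L^2}\,\|\bm h(t)\|_{\cE}$ (the $\bs J$ is an isometry), so it suffices to show $\|\bs R_j(t)\psi_j^0(t)\|_{L^2} + \|\bs R_j(t)\psi_j^1(t)\|_{L^2} \lesssim \eee^{-\eta t}$. Now $\psi_j^0(t, x)$ and $\psi_j^1(t, x)$ are, by \eqref{eq:movingmodes}, built from $\partial_x H_{n_{j-1}, n_j}$ and $\partial_x^2 H_{n_{j-1}, n_j}$ evaluated at $\gamma_j(x - v_j t - a_j)$ (the $\psi^1$ mode also carries a polynomially growing prefactor $v_j x$, harmless against exponential decay), hence decay like $\eee^{-m_\ast\gamma_j|x - v_j t - a_j|}$ away from the point $x = v_j t + a_j$; while each summand of $\bs R_j(t)$ decays like $\eee^{-m_\ast\gamma_l|x - v_l t - a_l|}$ away from $x = v_l t + a_l$. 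Since $v_l \neq v_j$, the two centres are separated by $|v_l - v_j|\,t - |a_l - a_j| \gtrsim t$ for $t \geq T_0$, and the product of the two exponential profiles integrates (in $x$) to something $\lesssim \eee^{-\eta t}$ with, say, $\eta := \tfrac12 \min_{l\neq j}(m_\ast \min(\gamma_l,\gamma_j))\,\min_{l\neq j}|v_l - v_j|$; this is the standard ``localisation of a product of two exponentials centred at distant points'' computation, and choosing $T_0$ large enough absorbs the shifts $a_l$. One then takes $\eta$ and $T_0$ uniform over $(\bs v, \bs a) \in A$ because on a compact set the velocity gaps $|v_l - v_j|$ are bounded below, the Lorentz factors are bounded above, and the shift offsets are bounded.

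The main obstacle is purely bookkeeping rather than conceptual: one must carry the polynomially growing factors $x$ appearing in $\psi_j^1$ and in the particular solution \eqref{eq:part-sol-4} through the product-of-exponentials estimate and verify they do not spoil the exponential-in-$t$ gain; this is routine since any polynomial in $x$ is dominated by $\eee^{\epsilon|x - v_j t - a_j|}$ after shrinking the exponent, at the cost of shrinking $\eta$. A minor additional point is to justify differentiation under the integral sign in $\dd t \langle \psi_j^0(t), \bm h(t)\rangle$, which follows from the regularity and decay of $\psi_j^0$ (smooth, exponentially decaying) together with $\bm h \in C(\bR, \cE)$ and the equation \eqref{eq:multikink}; alternatively one simply invokes that \eqref{eq:covector-3}--\eqref{eq:covector-4} are already established in Lemma~\ref{lem:kg-one-comp1} and only the perturbative source term requires new work.
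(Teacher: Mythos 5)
Your proposal takes essentially the same route as the paper: the paper's proof is a citation to \cite{CJ1-19} (Lemma 4.8 there), and the mechanism announced before the statement is exactly yours --- the exact identities of Lemma~\ref{lem:kg-one-comp1} for a single moving kink, perturbed by the interaction with the other kinks, which is controlled via the exponential decay of Proposition~\ref{prop:decaykink} and the linear-in-$t$ separation of the kinks.

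One step of your sketch is stated inaccurately, and it is precisely the place where care is needed. The $(1,1)$-entry of $\bs R_j$ is \emph{not} a sum of terms decaying like $\eee^{-m_*|x-v_l t-a_l|}$ on both sides of the $l$-th kink: $V_{n_{l-1},n_l}(y)$ tends to the two generally different constants $m_{n_{l-1}}^2$ and $m_{n_l}^2$ as $y\to-\infty$ and $y\to+\infty$ (in the $\phi^6$ model adjacent masses already differ), so each summand of \eqref{eq:V(t)} decays only on one side; moreover, for $j\ge 2$ the difference $V-V_{n_{j-1},n_j}(\gamma_j(\,\cdot\,-v_jt-a_j))$ contains the leftover constant $-m_{n_{j-1}}^2$, which decays nowhere. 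Taken literally, your ``product of two two-sided exponentials'' computation does not apply. The fix is routine but must be made: subtract from each $V_{n_{l-1},n_l}(\gamma_l(\,\cdot\,-v_lt-a_l))$, $l\neq j$, the constant it approaches on the side facing the $j$-th kink ($m_{n_l}^2$ for $l<j$, $m_{n_{l-1}}^2$ for $l>j$) and check that the remaining constants telescope to zero against $-m_{n_{j-1}}^2$. After this regrouping each piece of $\bs R_j$ is globally bounded and decays exponentially toward the $j$-th kink, which, combined with the two-sided exponential decay of $\psi_j^0,\psi_j^1$ (the polynomial factor sits in the centred variable $x-v_jt-a_j$, hence is harmless), yields $\|\bs R_j(t)\psi_j^\sigma(t)\|_{L^2}\lesssim \eee^{-\eta t}$ by splitting the integral at $|x-v_jt-a_j|=\rho t$. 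With this correction your perturbed versions of \eqref{eq:covector-3}--\eqref{eq:covector-4} and the Cauchy--Schwarz step give \eqref{eq:zero-mode-0}--\eqref{eq:zero-mode-1}, uniformly for $(\bs v,\bs a)$ in a compact set, as you claim.
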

\begin{proof}
See the proof of Lemma 4.8 in \cite{CJ1-19}.
\end{proof}
\begin{remark}
Since, for any $\bs v, \bs a$ and $t$, $\bs L(\bs v, \bs a; t)$ is a self-adjoint operator
on $L^2(\bR)\times L^2(\bR)$ and $\bs J$ is a skew-adjoint operator on the same space,
\eqref{eq:zero-mode-0} can be equivalently written
\begin{equation}
\big|\la\partial_t \psi_j^0(t), \bs h(t)\ra + \la \psi_j^0(t), \bs J\bs L(\bs v, \bs a; t)\bs h(t)\ra\big| = \big|\la \partial_t \psi_j^0(t) - \bs J\bs L(\bs v, \bs a; t)\psi_j^0(t), \bs h(t)\ra \big| \lesssim \eee^{-\eta t}\|\bs h(t)\|_\cE,
\end{equation}
in other words
\begin{equation}
\label{eq:zero-mode-01}
\|\partial_t \psi_j^0(t) - \bs J\bs L(\bs v, \bs a; t)\psi_j^0(t)\|_{\cE^*} \lesssim \eee^{-\eta t}.
\end{equation}
Analogously, \eqref{eq:zero-mode-1} can be equivalently written
\begin{equation}
\label{eq:zero-mode-11}
\|\partial_t \psi_j^1(t) - \bs J\bs L(\bs v, \bs a; t)\psi_j^1(t) + \gamma_j^{-1}\psi_j^0(t)\|_{\cE^*} \lesssim \eee^{-\eta t}.
\end{equation}
\end{remark}

Following \cite{CoMu2014, CoMa-multi}, as well as earlier papers \cite{MMT02, Martel05} in the KdV setting,
we construct a quadratic functional allowing to control the error
in a neighborhood of a multi-soliton. The point is that the quadratic form
$Q_{v,n,n'}\left(a;\bm{h}_{0},\bm{h}_{0}\right)$ defined above
contains a term depending on $v$. In order to study a neighbourhood of a multikink,
we need a functional which looks like $Q_{v,n_{j-1},n_j}\left(v_j t + a_j;\bm{h}_{0},\bm{h}_{0}\right)$
near the $j$-th kink.
Set
\begin{equation}
\label{eq:cut-off}
\chi_{j}\left(t,x\right):=\chi\left(\frac{x-v_{j}t-a_{j}}{\rho t}\right),
\end{equation}
where $\rho$ is a small positive number, $\chi$ is a smooth bump function such that $\chi(x)=1$ for $x\in\left[-1,1\right]$
and $\chi(x)=0$ for $x\in\mathbb{R}\backslash\left[-2,2\right]$.

Using the notation \eqref{eq:V(t)}, we define the quadratic form
\begin{equation}
\label{eq:Qmulti}
Q_{\bs v, \bs a}\left(t;\bm{h}_{0},\bm{h}_{0}\right)=\frac{1}{2}\int_\bR\bigg((\dot{h}_{0})^{2}+(\partial_{x}h_{0})^{2}+2\sum_{j=1}^{K}\chi_{j}(t)v_j\dot{h}_{0}\partial_{x}h_{0}+V(t)h_{0}^{2}\bigg)\ud x.
\end{equation}
\begin{proposition}\label{prop:coermulti}
Fix $K \in \bN$ and a chain of vacua $\bs n$.
For any $A \ssubset S^{(K)}\times \bR^K$ there exist $T_0 \in \bR $ and $\lambda_0 > 0$
such that for all $(\bs v, \bs a) \in A$, $\bm{h}_{0}\in \cE$ and $t \geq T_0$
	\begin{align*}
	Q_{\bs v, \bs a}\left(t;\bm{h}_{0},\bm{h}_{0}\right) \geq \lambda_0\left\Vert \bm{h}_{0}\right\Vert _{\cE}^{2}
	-\frac{1}{\lambda_0}\sum_{j=1}^{K}\left(\left\langle \psi_{j}^{0}(t),\bm{h}_{0}\right\rangle ^{2}+\left\langle \psi_{j}^{1}(t),\bm{h}_{0}\right\rangle ^{2}\right).
	\end{align*}
\end{proposition}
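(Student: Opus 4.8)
The plan is to prove the estimate by \emph{localising} $Q_{\bs v,\bs a}(t;\cdot,\cdot)$ with a partition of unity subordinate to the kink trajectories, so that near the $j$-th moving kink it reduces to the coercive form of Proposition~\ref{prop:coermoving}, while in the complementary ``vacuum'' region the potential $V$ of \eqref{eq:V(t)} is exponentially close to a positive vacuum mass squared and elementary positivity suffices; the localisation produces only small commutator errors. Fix $A\ssubset S^{(K)}\times\bR^K$. On $\bar A$ one has $|v_j|\le\bar v<1$ and $v_{j+1}-v_j\ge\delta_A>0$, so for $\rho$ small enough (depending on $A$) and $t$ large the supports $\{|x-v_jt-a_j|\le 2\rho t\}$ of the $\chi_j(t,\cdot)$ are pairwise disjoint. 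I would then choose smooth cutoffs $\theta_0,\theta_1,\dots,\theta_K$ with $\sum_{j=0}^K\theta_j^2\equiv1$ and $|\partial_x\theta_j|\lesssim(\rho t)^{-1}$, such that for $1\le j\le K$ one has $\theta_j(t,\cdot)\equiv1$ on $\{|x-v_jt-a_j|\le\tfrac12\rho t\}$ and $\supp\theta_j(t,\cdot)\subset\{\chi_j(t,\cdot)=1\}$; thus $\theta_0$ vanishes near every kink core and $b_0:=\sum_k\chi_k(t,\cdot)v_k$ satisfies $|b_0|\le\bar v$ on $\supp\theta_0$ (at most one term is nonzero at each point).

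Using $\sum_j\theta_j^2=1$ (hence $\sum_j\theta_j\partial_x\theta_j=0$) one has the pointwise identities $(\dot h_0)^2=\sum_j(\theta_j\dot h_0)^2$, $(\partial_xh_0)^2=\sum_j(\partial_x(\theta_jh_0))^2-\sum_j(\partial_x\theta_j)^2h_0^2$, $\dot h_0\,\partial_xh_0=\sum_j(\theta_j\dot h_0)\partial_x(\theta_jh_0)$ and $Vh_0^2=\sum_jV(\theta_jh_0)^2$. Because $\supp\theta_j\subset\{\chi_j=1\}$ and the $\chi_k$ have disjoint supports, the factor $\sum_k\chi_kv_k$ equals $v_j$ on $\supp\theta_j$ for $j\ge1$ and equals $b_0$ on $\supp\theta_0$; and by Proposition~\ref{prop:decaykink} together with the separation of the kinks, $V(t,\cdot)=V_{n_{j-1},n_j}(\gamma_j(\cdot-v_jt-a_j))+O(\eee^{-ct})$ on $\supp\theta_j$ ($j\ge1$), while on $\supp\theta_0$ the potential $V(t,\cdot)$ is exponentially close to the local vacuum mass squared, hence $\ge\tfrac12\min_nm_n^2>0$ for $t$ large. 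Combining these gives
\begin{align*}
Q_{\bs v,\bs a}(t;\bs h_0,\bs h_0)={}&Q^{(0)}(t;\theta_0\bs h_0,\theta_0\bs h_0)+\sum_{j=1}^KQ_{v_j,n_{j-1},n_j}\bigl(v_jt+a_j;\theta_j\bs h_0,\theta_j\bs h_0\bigr)\\
&{}-\tfrac12\sum_{j=0}^K\int_\bR(\partial_x\theta_j)^2h_0^2\ud x+O(\eee^{-ct})\|\bs h_0\|_\cE^2,
\end{align*}
where $Q^{(0)}(t;\bs g,\bs g):=\tfrac12\int_\bR\bigl((\dot g)^2+2b_0\dot g\,\partial_xg+(\partial_xg)^2+V(t,\cdot)g^2\bigr)\ud x$.

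For the vacuum piece, $\left(\begin{smallmatrix}1&b_0\\b_0&1\end{smallmatrix}\right)\ge(1-\bar v)\Id$ pointwise and $V(t,\cdot)\ge\tfrac12\min_nm_n^2$ on $\supp\theta_0$ yield $Q^{(0)}(t;\theta_0\bs h_0,\theta_0\bs h_0)\ge c_0\|\theta_0\bs h_0\|_\cE^2$ with $c_0>0$ uniform in $t$ and in $(\bs v,\bs a)\in A$. For $j\ge1$, Proposition~\ref{prop:coermoving} (whose constant, obtained via a Lorentz transform from Proposition~\ref{prop:L-index}, depends continuously on $v_j$ and is thus uniform on $\bar A$) gives $Q_{v_j,n_{j-1},n_j}(v_jt+a_j;\theta_j\bs h_0,\theta_j\bs h_0)\ge\lambda_1\|\theta_j\bs h_0\|_\cE^2-\tfrac1{\lambda_1}\bigl(\la\psi_j^0(t),\theta_j\bs h_0\ra^2+\la\psi_j^1(t),\theta_j\bs h_0\ra^2\bigr)$. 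Since each $\psi_j^i(t,\cdot)$ is a polynomial times $\partial_xH_{n_{j-1},n_j}(\gamma_j(\cdot-v_jt-a_j))$, it is exponentially concentrated within $O(1)$ of the $j$-th kink, where $\theta_j\equiv1$; hence $\|(1-\theta_j)\psi_j^i(t)\|_{L^2\times L^2}\lesssim\eee^{-ct}$, so $\la\psi_j^i(t),\theta_j\bs h_0\ra^2\le2\la\psi_j^i(t),\bs h_0\ra^2+O(\eee^{-ct})\|\bs h_0\|_\cE^2$. Summing, and using $\sum_j\|\theta_j\bs h_0\|_\cE^2\ge\|\bs h_0\|_\cE^2$ together with $\sum_j\int(\partial_x\theta_j)^2h_0^2\lesssim(\rho t)^{-2}\|\bs h_0\|_\cE^2$, one obtains
\begin{align*}
Q_{\bs v,\bs a}(t;\bs h_0,\bs h_0)\ge{}&\bigl(\min(c_0,\lambda_1)-C(\eee^{-ct}+(\rho t)^{-2})\bigr)\|\bs h_0\|_\cE^2\\
&{}-\frac2{\lambda_1}\sum_{j=1}^K\bigl(\la\psi_j^0(t),\bs h_0\ra^2+\la\psi_j^1(t),\bs h_0\ra^2\bigr),
\end{align*}
and the claim follows with $\lambda_0:=\tfrac12\min(c_0,\lambda_1)$ once $T_0$ (depending on $\rho$ and $A$) is large enough that $C(\eee^{-cT_0}+(\rho T_0)^{-2})\le\tfrac12\min(c_0,\lambda_1)$; all constants above are uniform for $(\bs v,\bs a)\in A$.

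The step I expect to be the main obstacle is the bookkeeping of this localisation: making the partition of unity genuinely compatible with the cutoffs $\chi_j$ in the cross term of $Q_{\bs v,\bs a}$ — so that the localised forms near the kinks are \emph{exactly} of the type in Proposition~\ref{prop:coermoving}, rather than perturbations with an uncontrolled $t$-dependent cross-term coefficient on an annulus of width $\sim\rho t$ — and keeping track of the several error terms (the $O((\rho t)^{-2})$ gradient commutators, the $O(\eee^{-ct})$ potential tails, and the mismatch between orthogonality against $\theta_j\bs h_0$ and against $\bs h_0$) so that everything remains uniform over the compact set $A$; the (routine) uniformity of the constant in Proposition~\ref{prop:coermoving} over the relevant velocities must also be addressed.
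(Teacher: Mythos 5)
Your proof is correct and follows essentially the same route as the paper, which delegates this statement to \cite[Lemma 4.6]{CJ1-19}: an IMS-type partition of unity compatible with the cutoffs $\chi_j$, the single-kink coercivity of Proposition~\ref{prop:coermoving} near each kink, pointwise positivity of the localised form in the vacuum regions, and absorption of the $O(\eee^{-ct})$ and $O((\rho t)^{-2})$ errors for $t \geq T_0$. The one point you assert rather than prove---that the constant in Proposition~\ref{prop:coermoving} can be taken uniformly for velocities in the compact projection of $\conj{A}$---is exactly the uniform-in-$A$ convention the paper adopts at the beginning of Section~\ref{ssec:coercivity}, and it holds because the Lorentz-transform argument behind Proposition~\ref{prop:coermoving} degenerates only as $|v| \to 1$.
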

\begin{proof}
See \cite[Lemma 4.6]{CJ1-19}.
\end{proof}

\section{Proofs of the main results}
\label{sec:fixed}

\subsection{Function spaces}
\label{ssec:spaces}
For given $T \in \bR$, we denote $C_0^\infty([T, \infty)\times \bR)$
the set of functions $[T, \infty) \times \bR \to \bR$ which are restrictions to $[T, \infty)\times \bR$
of functions in $C_0^\infty(\bR^2)$.
Similarly, if $A \ssubset S^{(K)} \times \bR^K$ is an open set with compact closure,
we denote $C_0^\infty(\conj A \times [T, \infty)\times \bR)$
the set of functions $\conj A\times [T, \infty) \times \bR \to \bR$ which are restrictions to $\conj A \times [T, \infty)\times \bR$
of functions in $C_0^\infty(\bR^{2K + 2})$.

Fix $K \in \bN$, a chain of vacua $\bs n$ and $(\bs v, \bs a) \in S^{(K)}\times \bR^K$.
For $T \geq 0$, $\delta > 0$ and $s \in \bN$,
we consider the space $\cH^s_{T, \delta}$ of functions $g: (T, \infty) \times \bR \to \bR$,
which is defined as the completion of the set $C_0^\infty([T, \infty)\times \bR)$ for the norm
\begin{equation}
\|g\|_{\cH^s_{T, \delta}}^2 := \sup_{t > T}\ \eee^{2\delta t}\int_\bR\sum_{r+u\leq s}(\partial_t^r \partial_x^u g(t, x))^2\ud x.
\end{equation}
In other words, $\cH^s_{T, \delta}$ is the space of functions whose partial derivatives of order $< s$
belong to $\cH_{T, \delta} = \cH_{T, \delta}^1$. The definition also makes sense for $s = 0$ and gives
a weighted $L^\infty L^2$ space, which we denote $L^2_{T, \delta} = \cH_{T, \delta}^0$.
If $b \in L^\infty((T, \infty)\times \bR)$, then
\begin{equation}
\label{eq:bh-L2}
\|bh\|_{L^2_{T, \delta}} \leq \|b\|_{L_{t, x}^\infty}\|h\|_{L^2_{T, \delta}},\qquad\text{for all }b, h: (T, \infty)\times \bR \to \bR.
\end{equation}
By the Chain Rule and the embedding $H^1(\bR) \subset L^\infty(\bR)$, we also obtain
\begin{equation}
\label{eq:bh-E}
\|bh\|_{\cH_{T, \delta}} \lesssim \|b\|_X \|h\|_{\cH_{T, \delta}}.
\end{equation}
where the norm $X$ is defined as
\begin{equation}
\label{eq:X-def}
\|b\|_X := \|b\|_{L_{t, x}^\infty} + \|\partial_t b\|_{L_t^\infty L_x^2} +
\|\partial_x b\|_{L_t^\infty L_x^2}.
\end{equation}
It is clear that $X$ is an algebra.
Finally, again using $H^1(\bR) \subset L^\infty(\bR)$, we have
\begin{equation}
\label{eq:gh-E}
\|gh\|_{\cH_{T, \delta_1+\delta_2}} \lesssim \|g\|_{\cH_{T, \delta_1}} \|h\|_{\cH_{T, \delta_2}}.
\end{equation}

Note that, if $\rho_\epsilon$ is a standard smooth space-time approximation of identity and $g \in \cH^s_{T,\delta}$,
then $\lim_{\epsilon \to 0^+}\|\rho_\epsilon \ast g - g\|_{\cH^s_{T, \delta}} = 0$. 

If, instead of $(\bs v, \bs a)$ being fixed, they are allowed to vary
in an open set with compact closure $A \ssubset S^{(K)} \times \bR^K$, we consider the space $C^k(A, \cH^s_{T, \delta})$
of functions $g: A \times (T, \infty) \times \bR \to \bR$,
which is defined as the completion of the set $C_0^\infty(\conj A \times [T, \infty)\times \bR)$, for the norm
\begin{equation}
\|g\|_{C^k(A, \cH^s_{T, \delta})}^2 := \sup_{(\bs v, \bs a) \in A}\sup_{t > T}\ \eee^{2\delta t}\int_\bR\sum_{|\bs p| + |\bs q| \leq k}\sum_{r+u\leq s}\big(\partial_{\bs v}^{\bs p} \partial_{\bs a}^{\bs q} \partial_t^r \partial_x^u g(\bs v, \bs a, t, x)\big)^2\ud x,
\end{equation}
in other words the space of functions whose partial derivatives of order $< s$ belong to $\cH_{T, \delta}$,
and depend in a $C^k$ way on the parameters $(\bs v, \bs a) \in A$.
Here, $\bs p, \bs q \in \bN^K$ are multi-indices.

Note that, if $g \in \cH_{T, \delta}^s$, then
\begin{equation}
\label{eq:conv-to-0}
\lim_{t \to \infty}\eee^{2\delta t}\int_\bR\sum_{r+u\leq s}(\partial_t^r \partial_x^u g(t, x))^2\ud x = 0.
\end{equation}
Similarly, if $g \in C^k(A, \cH_{T, \delta}^s)$, then
\begin{equation}
\label{eq:param-conv-to-0}
\lim_{t \to \infty}\sup_{(\bs v, \bs a) \in A}\eee^{2\delta t}\int_\bR\sum_{|\bs p| + |\bs q| \leq k}\sum_{r+u\leq s}(\partial_{\bs v}^{\bs p} \partial_{\bs a}^{\bs q} \partial_t^r \partial_x^u g(\bs v, \bs a, t, x))^2\ud x = 0.
\end{equation}

If $E$ is a normed vector space, we denote $B(E)$ the unit ball in $E$.

In the sequel, we adopt the convention of dropping arguments of functions from right to left,
that is, if $f: X \times Y \to Z$ is a function, then for $x \in X$, $f(x)$ denotes the function $Y \to Z$
given by $f(x)(y) := f(x, y)$, and similarly for more arguments,
thus for example the symbole $H$ can designate the function $(\bs v, \bs a; t, x) \mapsto H(\bs v, \bs a; t, x)$.

%

\subsection{Energy estimates}

For given $\bs v = (v_1, \ldots, v_K)\in S^{(K)}$ and $\bs a = (a_1, \ldots, a_K) \in \bR^K$,
we seek $g = g(\bs v, \bs a) \in \cH_{T, \delta}^s$ such that
\begin{equation}
\label{eq:decomp}
\phi := H(\bs v, \bs a) + g
\end{equation}
solves \eqref{eq:csf}.
In the next lemma, we prove energy estimates for the linearised problem.

\begin{lemma}
\label{lem:nonhom}
For all $K \in \bN$, a chain of vacua $\bs n$, $A \ssubset S^{(K)}\times \bR^K$ and $\delta_0$ sufficiently small
(depending on $A$), there exists $T_0$ such that if $T \geq T_0$,
then for any $(\bs v, \bs a) \in A$ and $f \in L^2_{T, \delta_0}$
there exists a~unique solution $R(\bs v, \bs a)f := h\in \cH_{T, \delta_0}$ of the equation 
\begin{equation}\label{eq:lemma31}
\partial_t^2 h - \partial_x^2 h + V(\bs v, \bs a)h = f,\qquad f, h: (T, \infty)\times \bR \to \bR.
\end{equation}
Moreover, $R(\bs v, \bs a)$ is a bounded operator $L_{T, \delta_0}^2 \to \cH_{T, \delta_0}$,
whose norm stays bounded when $T \geq T_0$ and $(\bs v, \bs a) \in A$.
\end{lemma}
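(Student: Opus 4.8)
The plan is to solve \eqref{eq:lemma31} by a fixed-point/continuity argument built on the coercivity of the quadratic form $Q_{\bs v, \bs a}$ from Proposition~\ref{prop:coermulti}, combined with the almost-conservation of the modulation functionals $\langle \psi_j^0(t), \bs h(t)\rangle$ and $\langle \psi_j^1(t), \bs h(t)\rangle$ from Proposition~\ref{prop:msol}. First I would recast \eqref{eq:lemma31} as the first-order system $\partial_t \bs h = \bs J \bs L(\bs v, \bs a; t)\bs h + (0, f)$ with $\bs h = (h, \partial_t h)$, and observe that the natural energy to monitor is $\mathcal{Q}(t) := Q_{\bs v, \bs a}(t; \bs h(t), \bs h(t))$ plus a suitable combination of squares of the functionals $\langle \psi_j^{0,1}(t), \bs h(t)\rangle$. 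Differentiating $\mathcal{Q}(t)$ along a solution, the ``bulk'' terms coming from the time dependence of the cutoffs $\chi_j$ in \eqref{eq:Qmulti} are supported where $|x - v_j t - a_j| \sim \rho t$, i.e.\ in regions where the potentials $V_{n_{j-1},n_j}$ differ from their vacuum values by $O(e^{-c\rho t})$ (by Proposition~\ref{prop:decaykink}), hence they contribute an error $\lesssim e^{-c\rho t}\|\bs h(t)\|_\cE^2$; the forcing term contributes $\langle (0,f), \text{(gradient terms)}\rangle \lesssim \|f(t)\|_{L^2}\|\bs h(t)\|_\cE$. Together with \eqref{eq:zero-mode-0}--\eqref{eq:zero-mode-1} controlling the derivative of the modulation functionals, this yields a differential inequality of the schematic form
\begin{equation}
\label{eq:diff-ineq-plan}
\Big|\dd t N(t)\Big| \lesssim e^{-\eta t} N(t) + \|f(t)\|_{L^2}\, N(t)^{1/2},
\end{equation}
where $N(t) \gtrsim \|\bs h(t)\|_\cE^2$ by Proposition~\ref{prop:coermulti} (the orthogonality-defect terms are absorbed into $N$).

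Next I would use \eqref{eq:diff-ineq-plan} to run the estimate \emph{backward from $t = +\infty$}: since we want $\bs h \in \cH_{T,\delta_0}$, we impose the ``zero at infinity'' condition, which forces the solution to be unique and gives, after multiplying by an integrating factor $e^{2\delta_0 t}$ and using Cauchy--Schwarz on the forcing term, the a priori bound
\begin{equation}
e^{2\delta_0 t} N(t) \lesssim \Big(\int_t^\infty e^{\delta_0 s}\|f(s)\|_{L^2}\, \vd s\Big)^2 \lesssim \delta_0^{-2}\,\|f\|_{L^2_{T,\delta_0}}^2,
\end{equation}
provided $\delta_0$ is small enough that $e^{-\eta T_0} \ll \delta_0$ and $\eta, \lambda_0$ dominate, so that the $e^{-\eta t}N(t)$ term is reabsorbed. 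This is exactly the claim that $R(\bs v, \bs a): L^2_{T,\delta_0}\to \cH_{T,\delta_0}$ is bounded with norm uniform in $T \geq T_0$ and $(\bs v, \bs a)\in A$ — uniformity being automatic because Propositions~\ref{prop:coermulti} and~\ref{prop:msol} already provide constants uniform over $A$.

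For \emph{existence}, the above is only an a priori estimate, so I would obtain the actual solution by a limiting argument: solve the equation on finite time intervals $(T, T_n)$ with zero Cauchy data at $t = T_n$ (well-posed by standard linear wave theory), apply the uniform-in-$T_n$ bound just derived, and pass to the weak limit $T_n \to \infty$; the limit solves \eqref{eq:lemma31} on $(T,\infty)$ and inherits the $\cH_{T,\delta_0}$ bound by lower semicontinuity of the norm. Uniqueness follows from applying \eqref{eq:diff-ineq-plan} with $f = 0$ to the difference of two solutions: it gives $e^{2\delta_0 t} N(t) \to 0$ combined with $N$ non-decreasing-up-to-$e^{-\eta t}$ corrections, forcing $N \equiv 0$. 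The main obstacle, and the step needing the most care, is the differentiation of the mixed functional $N(t)$ and verifying that \emph{all} error terms — particularly the commutator terms between $\bs J \bs L$ and the cutoffs $\chi_j$, and the cross terms between the coercivity form and the modulation corrections — are genuinely of size $e^{-\eta t}\|\bs h\|_\cE^2$ rather than merely $\|\bs h\|_\cE^2$; this is where the separation of the kinks (distinct velocities, hence mutual distance $\gtrsim t$) and the exponential decay of Proposition~\ref{prop:decaykink} are essential, and it is the reason the cutoff width $\rho t$ in \eqref{eq:cut-off} must be chosen small relative to the minimal velocity gap.
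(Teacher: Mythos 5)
Your architecture is the same as the paper's (an energy functional built from \eqref{eq:Qmulti} and the projections on $\psi_j^0,\psi_j^1$, coercivity from Proposition~\ref{prop:coermulti}, backward integration from $t=+\infty$, existence by solving with zero data at finite times and passing to the limit, uniqueness from the a priori bound), but the key differential inequality you propose does not hold as stated, and the mechanism you invoke to justify it is wrong. The terms generated by $\partial_t\chi_j$ hitting the cross term $\chi_j v_j\dot h\,\partial_x h$ carry no kink potential, so the exponential decay of Proposition~\ref{prop:decaykink} is irrelevant to them; they are only $O((\rho t)^{-1})\|\bs h(t)\|_\cE^2$, i.e.\ small-constant for $t\geq T_0$ large, not $O(\eee^{-c\rho t})$. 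More importantly, $\frac 12\int h^2\partial_t V\ud x$ is of size $O(1)\|\bs h(t)\|_\cE^2$ and is not an error term at all: it must be \emph{cancelled} against the contribution of the cross terms after substituting $\partial_t^2 h=\partial_x^2h-Vh+f$ (this cancellation is the very reason the $\chi_j$'s appear in \eqref{eq:Qmulti}), and since $V$ tends to the distinct constants $m_{n_j}^2$ between the kinks, one must first split $V$ into exponentially localized pieces plus slowly varying plateaus before integrating by parts. What survives is \eqref{eq:quadratic1}: $\big|\frac{\vd}{\vd t}Q\big|\lesssim c_0\|\bs h\|_\cE^2+\|f\|_{L^2}\|\bs h\|_\cE$ with $c_0$ small after fixing $\delta_0$ and taking $T_0$ (and the plateau scale) large --- small-constant rather than $\eee^{-\eta t}$, which still suffices for the absorption because $c_0/\delta_0$ can be made small, but the verification step you flag as crucial (``all errors are of size $\eee^{-\eta t}\|\bs h\|_\cE^2$'') would fail as written.

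The second genuine gap is the lumped functional $N$. By \eqref{eq:zero-mode-1}, $\frac{\vd}{\vd t}\la\psi_j^1(t),\bs h(t)\ra$ contains the term $-\gamma_j^{-1}\la\psi_j^0(t),\bs h(t)\ra$, which is neither exponentially small nor small-constant; if you add the squares of the two families of functionals with comparable weights, this Jordan-block coupling contributes $O(1)\cdot N$ to $\frac{\vd}{\vd t}N$, and after backward integration over the relevant time scale $\delta_0^{-1}$ the resulting factor $O(1)/\delta_0$ cannot be absorbed by the coercivity constant. So the inequality $|\frac{\vd}{\vd t}N|\lesssim \eee^{-\eta t}N+\|f\|_{L^2}N^{1/2}$ is false as stated. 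The repair is the paper's hierarchical estimate: integrate the bound for $\la\psi_j^0,\bs h\ra$ first (its error genuinely is $\eee^{-\eta t}\|\bs h\|_\cE+\|f\|_{L^2}$), feed the resulting decay into the bound for $\la\psi_j^1,\bs h\ra$, and only then combine with the separate estimate of $Q$ via Proposition~\ref{prop:coermulti}; alternatively one can make a single functional work with strongly asymmetric weights on the two families of squares, but this must be said and checked. The remaining steps of your plan (uniformity of constants over $A$, approximation with zero data at finite times and passage to the limit, uniqueness from the a priori estimate) coincide with the paper's Step~2 and are fine.
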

\begin{proof}

\noindent\textbf{Step 1} (a priori estimates).
Let $f \in L^2_{T, \delta}$ and assume $h \in \cH_{T, \delta}$ is a weak solution of \eqref{eq:lemma31}.
We will show that
\begin{equation}
\label{eq:a-priori}
\|h\|_{\cH_{T, \delta}} \leq C_\delta \|f\|_{L^2_{T, \delta}},
\end{equation}
where $C_\delta$ depends on $\delta$, but not on $T, \bs v$ or $\bs a$.
Since, in this proof, $\bs v$ and $\bs a$ are considered as fixed, we will write $V$ instead of $V(\bs v, \bs a)$.

Without loss of generality, we can assume $h$ and $f$ are smooth functions.
Indeed, if $\rho_\epsilon$ is a standard smooth approximation of identity in dimension $2$, then
\begin{equation}
\partial_t^2 (\rho_\epsilon \ast h) - \partial_x^2 (\rho_\epsilon\ast h) + V(\rho_\epsilon\ast h)
= \rho_\epsilon\ast f - \rho_\epsilon\ast(V h) + V(\rho_\epsilon\ast h).
\end{equation}
If \eqref{eq:a-priori} holds for smooth functions, then we obtain
\begin{equation}
\label{eq:a-priori-conv}
\|\rho_\epsilon\ast h\|_{\cH_{T, \delta}} \leq C_\delta \|\rho_\epsilon\ast f - \rho_\epsilon\ast(V h) + V(\rho_\epsilon\ast h)\|_{L^2_{T, \delta}},
\end{equation}
for all $\epsilon > 0$. Both $\rho_\epsilon\ast(V h)$ and $V\rho_\epsilon\ast h$ converge to $V h$
in $L^2_{T, \delta}$ as $\epsilon \to 0$, hence we get \eqref{eq:a-priori} in the limit.

We now prove \eqref{eq:a-priori} for smooth functions by means of an energy estimate, that is,
we compute the time-derivatives of $Q\left(t;\bm{h}(t),\bm{h}(t)\right)$ and of the projections of $\bs h$ on the iterated kernel.
One additional difficulty is caused by the fact that $V$ is not a sum of localised terms, and approaches distinct
values $m_{n_0}^2, m_{n_1}^2, \ldots, m_{n_K}^2$ in the regions between the kinks.
In order to deal with this issue, we decompose $V$ in the following way.

Let $x_0 > 0$ to be chosen later. Let $\wt V_1$ be a smooth function such that $\wt V_1(x) = m_{n_0}^2$ for $x \leq -x_0$
and $\wt V_1(x) = m_{n_1}^2$ for $x \geq x_0$. For $2 \leq j \leq K$, let $\wt V_j$ be a smooth function such that
$\wt V_j(x) = 0$ for $x \leq -x_0$ and $\wt V_j(x) = m_{n_j}^2 - m_{n_{j-1}}^2$ for $x \geq x_0$.
We require $\|\partial_x \wt V_j\|_{L^\infty} \lesssim x_0^{-1}$ for all $1\leq j \leq K$.
We set
\begin{equation}
V_1(x) := V_{n_0, n_1}(\gamma_1 x) - \wt V_1
\end{equation}
and, for $2 \leq j \leq K$,
\begin{equation}
V_j(x) := V_{n_{j-1}, n_j}(\gamma_j x) - m_{n_{j-1}}^2 - \wt V_j.
\end{equation}
Then $V_1, \ldots, V_K$ are smooth exponentially decaying functions and, see \eqref{eq:V(t)},
\begin{equation}
V(t, x) = \sum_{j=1}^K \wt V_j(x - v_j t - a_j) + \sum_{j=1}^K V_j(x - v_j t - a_j).
\end{equation}

In the remaining part of this proof, we call ``negligible'' the terms which can be made
$\ll \|\bs h(t)\|_\cE^2 + \|\bs h(t)\|_\cE\|f(t)\|_{L^2}$ upon taking $T_0$ and $x_0$ large enough.
The sign $\simeq$ means equality up to negligible terms.
We have
\begin{align*}
\dd t Q\left(t;\bm{h}(t),\bm{h}(t)\right) & =\int_\bR\big(\partial_t h\partial_t^2{h}+\partial_{x}h\partial_{t}\partial_x{h}+V h\partial_t{h}\big)\ud x\\
& +\dd t\int_\bR\sum_{j=1}^{J}\chi_{j}(t)\partial_t{h}v_{j}\partial_{x}h\ud x +\frac 12\int_\bR h^{2}\partial_{t}V\ud x.
\end{align*}
Note that
\begin{align*}
\int_\bR h^{2}\partial_{t}V\ud x & =\int_\bR h^{2}\partial_{t}\bigg(\sum_{j=1}^K \wt V_j(\cdot - v_j t - a_j) + \sum_{j=1}^K V_j(\cdot - v_j t - a_j)\bigg)\ud x\\
& =-\sum_{j=1}^K \int_\bR h^2 v_j \partial_x \wt V_j(\cdot - v_j t - a_j)\ud x - \sum_{j=1}^K \int_\bR h^2 v_j \partial_x V_j(\cdot - v_j t - a_j)\ud x.
\end{align*}
Since $\|\partial_x \wt V_j\|_{L^\infty} \lesssim x_0^{-1}$, the first sum is negligible.
Integrating by parts the terms in the second sum, we obtain
\begin{equation}
\frac 12 \int_\bR h^{2}\partial_{t}V\ud x \simeq \sum_{j=1}^K \int_\bR v_j h\partial_{x}h V_{j}(\cdot-v_{j}t-a_{j})\ud x.
\end{equation}
We also have
\begin{align}\label{eq:dtXi}
\dd t\int\sum_{j=1}^{K}\chi_{j}\partial_t{h}v_{j}\partial_{x}h\ud x & =\sum_{j=1}^{K}\int_\bR\partial_{t}\chi_{j}\partial_t{h}v_{j}\partial_{x}h\ud x\\
& +\sum_{j=1}^{K}\int_\bR\chi_{j}\partial_t^2{h}v_{j}\partial_{x}h\ud x\\
& +\sum_{j=1}^{K}\int_\bR\chi_{j}\partial_t{h}v_{j}\partial_{t}\partial_x{h}\ud x.
\end{align}
Since $\|\partial_t \chi_j\|_{L^\infty} \lesssim t^{-2}$, the first term of the right hand side is negligible.
In the second term of the right-hand side above, we replace $\partial_t^2{h}$ by $\partial_{x}^2h-h-Vh+f$.
Integrating by parts, we see that $\partial_x^2 h$ and $h$ yield negligible terms
(whenever the differentiation hits $\chi_{j}$,
it produces a $\frac{1}{t}$ factor). By the same argument, the third term of the right hand side of \eqref{eq:dtXi} is negligible.
Consider the term
\begin{equation}
\int_\bR v_j\chi_{j}Vh \partial_{x}h\ud x = \int_\bR v_j\chi_{j}h\partial_x h\bigg(\sum_{i=1}^K \wt V_j(\cdot - v_j t - a_j) + \sum_{i=1}^K V_j(\cdot - v_j t - a_j)\bigg)\ud x.
\end{equation}
Since $V_j$ are exponentially decaying, the terms $V_i$ for $i \neq j$ can be neglected.
Integrating by parts, we see that the terms $\wt V_i$ can be neglected as well, and we are left with
\begin{equation}
\int_\bR v_j\chi_{j}Vh \partial_{x}h\ud x \simeq \int_\bR v_j h\partial_{x}h V_{j}(\cdot-v_{j}t-a_{j})\ud x.
\end{equation}
Note that in the third term of \eqref{eq:dtXi}, we perform integration by parts
\[
\int\sum_{j=1}^{J}\chi_{j}\partial_t{h}v_{j}\partial_{t}\partial_x{h}\ud x=-\frac{1}{2}\int\sum_{j=1}^{J}\partial_{x}\chi_{j}v_{j}(\partial_t{h})^{2}\ud x.
\]
Combining the estimates above, we obtain
\begin{equation}\label{eq:quadratic1}
\Big|\dd t Q\left(t;\bm{h}(t),\bm{h}(t)\right)\Big|\lesssim c_0\left\Vert \bm{h}(t)\right\Vert _{\mathcal{E}}^{2}+\left\Vert f(t)\right\Vert _{L^{2}}\left\Vert \bm{h}(t)\right\Vert _{\mathcal{E}},
\end{equation}
where $c_0$ can be made arbitrarily small by taking $T_0$ and $x_0$ large.

Next, we compute the time derivatives of the projections on the iterated kernel.
Using the notation from Section~\ref{ssec:coercivity}, we can rewrite \eqref{eq:lemma31} as
\begin{equation}
\partial_t \bs h(t) = \bs J\bs L(\bs v, \bs a; t)\bs h(t) + (0, f(t)).
\end{equation}
Using the self-adjointness of $\bs L(\bs v, \bs a; t)$ and the skew-adjointness of $\bs J$, we have
\begin{equation}
\begin{aligned}
\dd t\big\langle \psi_{j}^{0}(t),\bm{h}(t)\big\rangle &=\big\la \partial_t \psi_{j}^{0}(t),\bm{h}(t)\big\ra +\big\langle \psi_{j}^{0}(t),\partial_t \bm{h}(t)\big\rangle \\
&= \la \partial_t \psi_j^0(t) - \bs J\bs L(\bs v, \bs a; t)\psi_j^0(t), \bs h(t)\ra + \la\psi_j^0(t), (0, f(t))\ra.
\end{aligned}
\label{eq:zero1}
\end{equation}
Using \eqref{eq:zero-mode-01}, we infer
\begin{align*}
\Big|\dd t\left\langle \psi_{j}^{0}(t),\bm{h}(t)\right\rangle\Big| \lesssim e^{-\eta t}\left\Vert \bm{h}(t)\right\Vert _{\mathcal{E}}+\left\Vert f(t)\right\Vert _{L^{2}}.
\end{align*}
Similarly, \eqref{eq:zero-mode-11} yields
\[
\Big|\dd t\left\langle \psi_{j}^{1}(t),\bm{h}(t)\right\rangle\Big| \lesssim \big|\left\langle \psi_{j}^{0}(t),\bm{h}(t)\right\rangle\big|+e^{-\eta t}\left\Vert \bm{h}(t)\right\Vert _{\mathcal{E}}+\left\Vert f(t)\right\Vert _{L^{2}}.
\]
Therefore, integrating from $\infty$, for zero modes, we obtain that
\[
\left|\left\langle \psi_{j}^{0}(t),\bm{h}(t)\right\rangle \right|\lesssim\frac{e^{-\left(\eta+\delta\right)t}}{\eta+\delta}\left\Vert \bm{h}\right\Vert _{\cH_{T_0, \delta}}+\frac{1}{\delta}e^{-\delta t}\left\Vert f\right\Vert _{L_{T_0, \delta}^2}
\]
and
\[
\left|\left\langle \psi_{j}^{1}(t),\bm{h}(t)\right\rangle \right|\lesssim\left(\frac{e^{-\left(\eta+\delta\right)t}}{\left(\eta+\delta\right)^{2}}+\frac{e^{-\left(\eta+\delta\right)t}}{\eta+\delta}\right)\left\Vert \bm{h}\right\Vert _{\cH_{T_0, \delta}}+\left(\frac{1}{\delta}+\frac{1}{\delta^{2}}\right)e^{-\delta t}\left\Vert f\right\Vert _{L^{2}_{T_0, \delta}}.
\]
For the quadratic form part, using \eqref{eq:quadratic1} and the
bootstrap assumptions, we integrate from $\infty$ and conclude that
\[
\left|Q\left(t;\bm{h}(t),\bm{h}(t)\right)\right|\lesssim c_0\frac{e^{-2\delta t}}{2\delta}\left\Vert \bm{h}\right\Vert _{\cH_{T_0, \delta}}^{2}+\frac{e^{-2\delta t}}{2\delta}\left\Vert f\right\Vert _{L^{2}_{T_0, \delta}}\left\Vert \bm{h}\right\Vert _{\cH_{T_0, \delta}}.
\]
Therefore by the coercivity, one has
\begin{align*}
\left\Vert \bm{h}(t)\right\Vert _{\mathcal{E}}^{2} & \lesssim\left(\frac{e^{-\left(\eta+\delta\right)t}}{\left(\eta+\delta\right)^{2}}+\frac{e^{-\left(\eta+\delta\right)t}}{\eta+\delta}\right)^{2}\left\Vert \bm{h}\right\Vert _{\cH_{T_0, \delta}}^{2}+\left(\frac{1}{\delta}+\frac{1}{\delta^{2}}\right)^{2}e^{-2\delta t}\left\Vert f\right\Vert _{L^{2}_{T_0, \delta}}\\
& +c_0\frac{e^{-2\delta t}}{2\delta}\left\Vert \bm{h}\right\Vert _{\cH_{T_0, \delta}}^{2}+\frac{e^{-2\delta t}}{2\delta}\left\Vert f\right\Vert _{L^{2}_{T_0, \delta}}\left\Vert \bm{h}\right\Vert _{\cH_{T_0, \delta}},
\end{align*}
where $c_0$ can be made arbitrarily small by taking $T_0$ and $x_0$ large,
yielding \eqref{eq:a-priori}.

\noindent\textbf{Step 2} (existence and uniqueness of solutions).
Let $f \in L^2_{T, \delta}$.
Take $\wt T \geq T$ and let $f_{\wt T}(t) := \chi(t - \wt T)f(t)$,
where $\chi$ is a smooth decreasing function such that $\chi(t) = 1$ for $t \leq -1$ and $\chi(t) = 0$ for $t \geq 0$.
Then $\lim_{\wt T \to \infty}\|f_{\wt T} - f\|_{L^2_{T, \delta}} = 0$, see \eqref{eq:conv-to-0}.
Let $h_{\wt T}\left(t,x\right)$ be the solution of
\begin{equation}
\label{eq:hl-eq}
\partial_{t}^2h_{\wt T}-\partial_{x}^2 h_{\wt T}+V(t)h_{\wt T}=f_{\wt T}
\end{equation}
such that $\big({h}_{\wt T}(\wt T), \partial_t{h}_{\wt T}(\wt T)\big)= (0,0)$.
This solution exists by the standard local Cauchy theory.
Since $\wt f(t) = 0$ for all $t \geq \wt T$, we have $\wt h(t) = 0$ for all $t \geq \wt T$,
in particular $\wt h \in \cH_{T, \delta}$.

By Step 2, we have that $(h_{\wt T})_{\wt T}$ satisfies the Cauchy condition in $\cH_{T, \delta}$ as $\wt T \to \infty$,
hence it converges. Let $h \in \cH_{T, \delta}$ be the limit.
Passing to the limit in \eqref{eq:hl-eq}, we obtain that \eqref{eq:lemma31}
holds in the sense of distributions.

Uniqueness follows directly from the a priori estimate.
\end{proof}
In the next lemma, we are interested in the dependence on the parameters, thus $V$ should be seen again
as a real-valued function defined on $A\times [T_0, \infty) \times \bR$.
\begin{lemma}
\label{lem:reg-lin}
For $K \in \bN$, a chain of vacua $\bs n$, $A \ssubset S^{(K)}\times \bR^K$, $\delta_0$ sufficiently small
(depending on $A$) and $\delta_1 \in (0, \delta_0)$,
there exists $T_0$ such that if $l \in \bN_0, s \in \bN$, $T \geq T_0$,
then for any $b \in B(C^0(A, \cH_{T, \delta_0})) \cap C^l(A, \cH^s_{T, \delta_0})$,
$f \in C^l(A, \cH^{s-1}_{T, \delta})$
there exists a unique solution $h\in C^l(A, \cH^{s}_{T_0, \delta_1})$ of the equation 
\begin{equation}
\partial_t^2 h - \partial_x^2 h + (V+b)h = f, \qquad f, b, h: A \times (T_0, \infty) \times \bR \to \bR.
\end{equation}
\end{lemma}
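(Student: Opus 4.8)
The plan is to bootstrap from Lemma~\ref{lem:nonhom}. Let $R(\bs v, \bs a)$ denote the solution operator of $\partial_t^2 - \partial_x^2 + V(\bs v, \bs a)$ from that lemma, bounded $L^2_{T,\delta} \to \cH_{T,\delta}$ for every small $\delta > 0$, with norm uniform in $T \geq T_0$ and $(\bs v, \bs a) \in A$. First I would absorb the perturbation $b$: the equation $(\partial_t^2 - \partial_x^2 + V + b)h = f$ is equivalent to $(\Id + R(\bs v, \bs a) M_b)h = R(\bs v, \bs a) f$, where $M_b$ is multiplication by $b$. Since $b$ lies in the unit ball of $C^0(A, \cH_{T,\delta_0})$, the embedding $H^1(\bR) \subset L^\infty(\bR)$ gives $\|b(t)\|_{L^\infty_x} \lesssim \eee^{-\delta_0 t}$, so by \eqref{eq:bh-L2} the operator $M_b$ maps $\cH_{T,\delta_1}$ to $L^2_{T,\delta_1}$ with norm $\lesssim \eee^{-\delta_0 T}$; enlarging $T_0$ makes $\|R(\bs v, \bs a)M_b\|_{\cH_{T,\delta_1}\to\cH_{T,\delta_1}} < \tfrac12$, and the Neumann series gives a unique $h \in \cH_{T,\delta_1}$ with $\|h\|_{\cH_{T,\delta_1}} \lesssim \|f\|_{L^2_{T,\delta_1}}$ (the same works with $\delta_1$ replaced by any $\delta \in (0, \delta_0]$). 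Next, for general $s$ I would differentiate the equation in $x$: $\partial_x h$ solves an equation of the same type with right-hand side $\partial_x f - (\partial_x V + \partial_x b)h$, controlled in the next lower-order space by the product estimates \eqref{eq:bh-E}, \eqref{eq:gh-E} (here $\partial_x V$ has finite $X$-norm \eqref{eq:X-def} uniformly on $A$ by Proposition~\ref{prop:decaykink}, so multiplication by it does not cost any exponential rate); iterating and using $\partial_t^2 h = \partial_x^2 h - (V+b)h + f$ to convert pairs of time derivatives into spatial ones, one obtains $h \in \cH^s_{T,\delta_1}$ with no further loss in the rate. This settles the case $l = 0$.

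For the $C^l$ dependence on $(\bs v, \bs a)$ I would induct on $l$. The base case $l = 0$ is continuity of $(\bs v, \bs a) \mapsto h$ into $\cH^s_{T,\delta_1}$: subtracting the equations at $(\bs v, \bs a)$ and $(\bs v', \bs a')$, the difference solves a $(V + b)$-equation with right-hand side $(f - f') - (V - V' + b - b')h'$, so the a priori estimate bounds $\|h - h'\|_{\cH^s_{T,\delta_1}}$ by $\|f - f'\|_{\cH^{s-1}_{T,\delta_1}} + \|(b - b')h'\|_{\cH^{s-1}_{T,\delta_1}} + \|(V - V')h'\|_{\cH^{s-1}_{T,\delta_1}}$. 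The first two terms vanish as $(\bs v', \bs a') \to (\bs v, \bs a)$ by continuity of $f$ and $b$ in the parameters; for the last, I would split the time interval at a large $T_1$, using that the weight $\eee^{\delta_1 t}$ is beaten by the $\eee^{-\delta_0 t}$ decay of $h'$ for $t > T_1$, while $\|V(t) - V'(t)\|_{L^\infty_x} \to 0$ uniformly for $T < t \leq T_1$ (the profiles $V_{n_{j-1},n_j}(\gamma_j(x - v_j t - a_j))$ depending continuously on the parameters through their arguments) --- this is the first place $\delta_1 < \delta_0$ is genuinely used. For the step $l \to l+1$, fix $\delta_2 \in (\delta_1, \delta_0)$; the inductive hypothesis gives $h \in C^l(A, \cH^s_{T,\delta_2})$, and one expects $\partial_{v_j} h$ to solve
\[
\partial_t^2 u_j - \partial_x^2 u_j + (V + b)u_j = \partial_{v_j} f - \partial_{v_j}(V + b)\,h ,
\]
and similarly for $\partial_{a_j} h$. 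The structural point is that, by the chain rule, $\partial_{v_j} V$ is an exponentially localized profile --- a derivative of $W''(H_{n_{j-1},n_j})$ at $\gamma_j(x - v_j t - a_j)$, decaying exponentially away from the $j$-th kink by Proposition~\ref{prop:decaykink} --- times the factor $\partial_{v_j}[\gamma_j(x - v_j t - a_j)]$, which is affine in $t$ and $x$; each further parameter derivative contributes one more affine factor ($\partial_{a_j}$ being even cleaner, since $\partial_{a_j}[\gamma_j(x - v_j t - a_j)] = -\gamma_j$). Hence $\partial_{v_j}(V + b)\,h$, together with its parameter derivatives up to order $l$, lies in $\cH^{s-1}$ at rate $\delta_1$: the polynomial-in-$t$ factors are absorbed by dropping the weight from $\delta_2$ to $\delta_1$, while $\partial_{v_j} f \in C^l(A, \cH^{s-1}_{T,\delta})$ with $\delta \geq \delta_1$. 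So the right-hand side is in $C^l(A, \cH^{s-1}_{T,\delta_1})$ and the inductive hypothesis yields a unique $u_j \in C^l(A, \cH^s_{T,\delta_1})$.

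It then remains to identify $u_j$ with $\partial_{v_j} h$. Writing $\bs v^\epsilon$ for $\bs v$ with its $j$-th coordinate shifted by $\epsilon$ and $h^\epsilon := h(\bs v^\epsilon, \bs a)$, $V^\epsilon := V(\bs v^\epsilon, \bs a)$, etc., I would set $w^\epsilon := \epsilon^{-1}(h^\epsilon - h) - u_j$ and subtract the corresponding equations: $w^\epsilon$ solves the $(V + b)$-equation whose right-hand side, after adding and subtracting $\partial_{v_j}(V + b)\,h$, is a sum of terms $\epsilon^{-1}(f^\epsilon - f) - \partial_{v_j} f$, $\big(\epsilon^{-1}(V^\epsilon - V) - \partial_{v_j} V\big)h$, $\epsilon^{-1}(V^\epsilon - V)(h^\epsilon - h)$ and their $b$-analogues. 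By Taylor's formula in the parameter the first tends to $0$ in $\cH^{s-1}_{T,\delta}$; the second is $O(\epsilon)$ in $\cH^{s-1}_{T,\delta_1}$ since $\epsilon^{-1}(V^\epsilon - V) - \partial_{v_j} V$ is $|\epsilon|$ times a polynomial-in-$t$ localized profile and $h$ decays at rate $\delta_2 > \delta_1$; and the third is $\lesssim \|h^\epsilon - h\|_{\cH^s_{T,\delta_2}} \to 0$ by the continuity proved at the previous level (the $b$-terms being even easier, as $b$ and its parameter derivatives decay exponentially). The a priori estimate then forces $\|w^\epsilon\|_{\cH^s_{T,\delta_1}} \to 0$, so $\partial_{v_j} h = u_j$ and, likewise, $\partial_{a_j} h$ exist and lie in $C^l(A, \cH^s_{T,\delta_1})$; hence $h \in C^{l+1}(A, \cH^s_{T,\delta_1})$, closing the induction. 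One checks by the same arguments that spatial differentiation commutes with this scheme, so the conclusion holds for all $s$ and $l$.

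The main obstacle I expect is precisely the mismatch between the coefficient $V$ and the decaying spaces $\cH_{T,\delta}$: $V$ does not decay in $t$ at all, and $\partial_{\bs v} V$ grows polynomially, so $V$ cannot be treated as a perturbation of the d'Alembertian --- it has to remain inside the operator $R(\bs v, \bs a)$, and its variation with the parameters becomes admissible only after multiplication by the exponentially decaying solution $h$, at the price of an arbitrarily small decrease of the exponential weight. Carrying this loss cleanly through $l$ successive parameter differentiations, and verifying that all the difference-quotient remainders above really vanish in the correct norms, is the delicate part; absorbing $b$ by a Neumann series and gaining spatial regularity from the equation are routine.
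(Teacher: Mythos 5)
Your proposal is correct and takes essentially the same route as the paper: absorb $b$ by smallness coming from $H^1(\bR)\subset L^\infty(\bR)$ and the exponential decay (Neumann series in place of the paper's contraction via Lemma~\ref{lem:nonhom}), prove continuity in $(\bs v,\bs a)$ by subtracting the two equations and using the a priori estimate with the loss $\delta_1<\delta_0$ to handle $(V-V')h$, and obtain parameter derivatives by solving the formally differentiated equation and identifying the solution with the difference quotient, absorbing the polynomial growth of $\partial_{v_k}V$ by the same drop in the exponential rate. The only cosmetic deviation is the treatment of the $s$-induction by direct $x$-differentiation and substitution through the equation, where the paper runs the same difference-quotient identification it uses for the parameter derivatives.
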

\begin{proof}

\noindent
\textbf{Step 1} (existence of solutions).
Let $l = 0$ and $s = 1$.
For given $(\bs v, \bs a) \in A$, we rewrite the equation as follows:
\begin{equation}
\label{eq:above}
h(\bs v, \bs a) = R(\bs v, \bs a)\big(f(\bs v, \bs a) - b(\bs v, \bs a)h(\bs v, \bs a)\big).
\end{equation}
The embedding $H^1(\bR) \subset L^\infty(\bR)$ implies, by taking $T_0$ large enough, that $\|b\|_{L^\infty((T_0, \infty)\times \bR)}$ 
can be made as small as we wish.
Thus, by \eqref{eq:bh-L2} and Lemma~\ref{lem:nonhom}, \eqref{eq:above} defines a contraction in $\cH_{T_0, \delta_1}^1$.

\noindent
\textbf{Step 2} (continuity with respect to the parameters).
For each $(\bs v, \bs a) \in A$, let $h(\bs v, \bs a)$ be the function constructed in Step 1.
We will prove that $h \in C^0(A, \cH_{T_1, \delta_0}^1)$.
To this end, take $(\bs v, \bs a), (\sh{\bs v}, \sh{\bs a}) \in A$
and denote, for the sake of brevity, $\sh h := h(\sh{\bs v}, \sh{\bs a})$, $\sh f := f(\sh{\bs v}, \sh{\bs a})$,
$\sh V := V(\sh{\bs v}, \sh{\bs a})$ and $\sh b := b(\sh{\bs v}, \sh{\bs a})$, as well as $\wt h := h(\bs v, \bs a)$,
$\wt f := f({\bs v}, {\bs a})$,
$\wt V := V({\bs v}, {\bs a})$ and $\wt b := b({\bs v}, {\bs a})$. We have
\begin{equation}
\partial_t^2 (\sh h - \wt h) - \partial_x^2 (\sh h - \wt h) + V(\sh h - \wt h) = \sh f - \wt f - (\sh V - \wt V)\sh h - \sh b(\sh h - \wt h) + (\sh b - \wt b)\wt h.
\end{equation}
By the a priori estimate, it suffices to check that
\begin{equation}
\lim_{(\sh{\bs v}, \sh{\bs a}) \to (\bs v, \bs a)}\big(\|\sh f - \wt f\|_{L^2_{T_0,\delta_0}} + \|(\sh V - \wt V)\sh h\|_{L^2_{T_0,\delta_0}} + \|(\sh b - \wt b)\wt h\|_{L^2_{T_0,\delta_0}}\big) = 0
\end{equation}
(indeed, since $\|\sh b\|_{L_{t, x}^\infty}$ is small, the term $\sh b(\sh h - \wt h)$ can be absorbed).
As for $\sh f - \wt f$, this is clear.
Continuity of $b$ implies $\|(\sh b - \wt b)\wt h\|_{L^2_{T_0,\delta_0}} \to 0$.
Finally, $\sh V - \wt V$ is bounded and converges to $0$ locally in $t$, so \eqref{eq:param-conv-to-0}
implies the desired bound.

\noindent
\textbf{Step 3} (differentiability).
We prove regularity of $h$ by induction on $s$, and then on $l$.
First, we fix $l = 0$.
If $s = 1$, there is nothing to do. Let $s \geq 2$.
We prove existence of the partial derivatives.
Define $h_t$ as the solution of
\begin{equation}
\partial_t^2 h_t - \partial_x^2 h_t + (V+b)h_t = \partial_t f - (\partial_t V + \partial_t b)h.
\end{equation}
By the induction hypothesis, $h_t \in C^0(A, \cH_{T_0, \delta_1}^{s-1})$.
We prove that $\partial_t h = h_t$ in the weak sense.
For fixed $(\bs v, \bs a)$, we consider
\begin{equation}
r_\epsilon(t, x) := h(\bs v, \bs a; t+ \epsilon, x) - h(\bs v, \bs a; t, x) - \epsilon h_t(\bs v, \bs a; t, x).
\end{equation}
It satisfies the equation
\begin{equation}
\begin{aligned}
&\partial_t^2 r_\epsilon - \partial_x^2 r_\epsilon + (V + b)r_\epsilon = f_\epsilon - f - (V_\epsilon + b_\epsilon - V - b)h_\epsilon - \epsilon\partial_t f + \epsilon(\partial_t V + \partial_t b)h \\
&= (f_\epsilon - f - \epsilon \partial_t f) - (V_\epsilon + b_\epsilon - V - b - \epsilon(\partial_t V + \partial_t b))h_\epsilon
- \epsilon(\partial_t V + \partial_t b)(h_\epsilon -h),
\end{aligned}
\end{equation}
where we have denoted $V_{\epsilon}(\bs v, \bs a; t, x) := V(\bs v, \bs a; t + \epsilon, x)$,
and similarly for $f_\epsilon$, $h_\epsilon$ and $b_\epsilon$.
We need to check that the right hand side is $o(\epsilon)$ in $L^2_{T_0, \delta_1}$ as $\epsilon \to 0$.
For the first two terms, this follows from the differentiability of $V, b$ and $f$, and the uniform boundedness of $h_\epsilon$
in $L_{T_0, \delta_1}^2$.
For the third term, we use the continuity of $h$ with respect to $t$.
By the a priori bound, $\|r_\epsilon\|_{\cH_{T_0, \delta_1}} \ll \epsilon$, implying $\partial_t h = h_t$.

Analogously, one proves that $\partial_x h \in \cH_{T_0, \delta_1}^{s-1}$, thus $h \in \cH_{T_0, \delta_1}^s$,
finishing the induction step with respect to $s$.

The induction step with respect to $l$ is similar, so we only sketch the argument for $\partial_{v_k}h$.
Define $h_{v_k}$ as the solution of
\begin{equation}
\partial_t^2 h_{v_k} - \partial_x^2 h_{v_k} + (V+b)h_{v_k} = \partial_{v_k} f - (\partial_{v_k} V + \partial_{v_k} b)h.
\end{equation}
By the induction hypothesis, $h_{v_k} \in C^0(A, \cH_{T_0, \delta_1}^{s-1})$
(note that it is here that the loss in the exponential convergence rate occurs,
due to the polynomial growth of $\partial_{v_k}V$).
We prove that $\partial_{v_k} h = h_{v_k}$ in the weak sense.
For fixed $(\bs v, \bs a)$, we consider
\begin{equation}
r_\epsilon(t, x) := h(\bs v + \epsilon \bs e_k, \bs a; t, x) - h(\bs v, \bs a; t, x) - \epsilon h_{v_k}(\bs v, \bs a; t, x),
\end{equation}
where $\bs e_k := (0, \ldots, 1, \ldots, 0)$ is the $k$-th element of the standard basis of $\bR^K$.
It satisfies the equation
\begin{equation}
\begin{aligned}
&\partial_t^2 r_\epsilon - \partial_x^2 r_\epsilon + (V + b)r_\epsilon = f_\epsilon - f - (V_\epsilon + b_\epsilon - V - b)h_\epsilon - \epsilon\partial_{v_k} f + \epsilon(\partial_{v_k} V + \partial_{v_k} b)h \\
&= (f_\epsilon - f - \epsilon \partial_{v_k} f) - (V_\epsilon + b_\epsilon - V - b - \epsilon(\partial_{v_k} V + \partial_{v_k} b))h_\epsilon
- \epsilon(\partial_{v_k} V + \partial_{v_k} b)(h_\epsilon -h),
\end{aligned}
\end{equation}
where we have denoted $V_{\epsilon}(\bs v, \bs a; t, x) := V(\bs v+ \epsilon\bs e_k, \bs a; t , x)$,
and similarly for $f_\epsilon$, $h_\epsilon$ and $b_\epsilon$.
We need to check that the right hand side is $o(\epsilon)$ in $L^2_{T_0, \delta_1}$.
For the first two terms, this follows from the differentiability of $V, b$ and $f$,
and the uniform boundedness of $h_\epsilon$ in $L_{T_0, \delta_1}^2$.
For the third term, we know from the Step 2 that $\lim_{\epsilon \to 0}\|h_\epsilon - h\|_{L_{T_0, \delta_0}^2} = 0$,
hence $\lim_{\epsilon \to 0}\|(\partial_{v_k} V + \partial_{v_k} b)(h_\epsilon - h)\|_{L_{T_0, \delta_1}^2} = 0$ for any $\delta_1 < \delta_0$.
\end{proof}

\subsection{Formulation as a fixed point problem and conclusions}
In order to make the formulas which follow shorter, we denote
\begin{equation}
H_k(\bs v, \bs a; t, x) := H_{n_{k-1}, n_k}(\gamma_k(x - v_k t - a_k))
\end{equation}
the $k$-th kink.
Plugging the decomposition \eqref{eq:decomp} into \eqref{eq:csf}, we obtain the following equation for
the error term $g = g(\bs v, \bs a)$:
\begin{equation}
\label{eq:error-eq}
\begin{aligned}
\partial_t^2 g(\bs v, \bs a) - \partial_x^2 g(\bs v, \bs a) + V(\bs v, \bs a)g(\bs v, \bs a) = N(\bs v,\bs a; g(\bs v, \bs a)),
\end{aligned}
\end{equation}
where we define
\begin{equation}
\label{eq:N-def}
\begin{aligned}
&N(\bs v,\bs a; g)  := -W'(H(\bs v, \bs a)+g) + \sum_{k=1}^K W'(H_k(\bs v, \bs a)) + V(\bs v, \bs a)g.
\end{aligned}
\end{equation}
We are going to reformulate this equation as a fixed point problem for a contraction.



We need to study some properties of composition with a smooth function in the relevant functional spaces. Before stating the next lemma, we recall that $B(E)$ denotes the unit ball in a normed space~$E$.
\begin{lemma}
\label{lem:baby-cross}
Let $F \in C^\infty(\bR)$. For any $C_0 \geq 0$, $T_0 \geq 0$ and $\delta_0 > 0$ there exists $C_1 \geq 0$ such that
\begin{align}
\|F(\phi + g) - F(\phi)\|_{\cH_{T_0, \delta_0}} &\leq C_1\|g\|_{\cH_{T_0, \delta_0}}, \label{eq:taylor-1} \\
\|F(\phi + g) - F(\phi) - F'(\phi)g\|_{\cH_{T_0, 2\delta_0}} &\leq C_1 \|g\|_{\cH_{T_0, \delta_0}}^2. \label{eq:taylor-2}
\end{align}
 for all $g \in B(\cH_{T_0, \delta_0})$ and $\phi$ satisfying $\|\phi\|_X \leq C_0$.
\end{lemma}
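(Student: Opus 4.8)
The plan is to reduce both estimates to Taylor's theorem with integral remainder, and then to control the resulting integrals in the norm $\cH_{T_0, \delta_0}$ using the algebra-type inequality \eqref{eq:bh-E} together with the product inequality \eqref{eq:gh-E}. For \eqref{eq:taylor-1} I would write
\[
F(\phi + g) - F(\phi) = \Big(\int_0^1 F'(\phi + \theta g)\ud\theta\Big) g =: b\, g,
\]
and observe that, since $\|\phi\|_X \leq C_0$ and $\|g\|_{\cH_{T_0, \delta_0}} \leq 1$ (so in particular $\|g\|_{L^\infty_{t,x}} \lesssim 1$ by the embedding $H^1 \subset L^\infty$ used in \eqref{eq:bh-E}), the argument $\phi + \theta g$ ranges in a fixed bounded set of $\bR$, uniformly in $\theta \in [0,1]$, $t > T_0$, $x \in \bR$. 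The key point is that $b \in X$ with $\|b\|_X$ bounded in terms of $C_0$, $T_0$, $\delta_0$ and $F$: indeed $\|b\|_{L^\infty_{t,x}} \leq \sup_{|y| \leq C_1'} |F'(y)|$, while $\partial_t b = \int_0^1 F''(\phi + \theta g)(\partial_t \phi + \theta \partial_t g)\ud\theta$, whose $L^\infty_t L^2_x$ norm is controlled by $\sup |F''|$ times $(\|\partial_t \phi\|_{L^\infty_t L^2_x} + \|\partial_t g\|_{L^\infty_t L^2_x}) \lesssim C_0 + \|g\|_{\cH_{T_0, \delta_0}} \leq C_0 + 1$, and similarly for $\partial_x b$. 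Then \eqref{eq:bh-E} gives $\|b g\|_{\cH_{T_0, \delta_0}} \lesssim \|b\|_X \|g\|_{\cH_{T_0, \delta_0}}$, which is \eqref{eq:taylor-1}.

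For \eqref{eq:taylor-2} I would use the second-order Taylor remainder,
\[
F(\phi + g) - F(\phi) - F'(\phi) g = \Big(\int_0^1 (1 - \theta) F''(\phi + \theta g)\ud\theta\Big) g^2 =: \wt b\, g^2,
\]
and argue exactly as above that $\wt b \in X$ with norm bounded in terms of $C_0$, $T_0$, $\delta_0$, $F$. The extra factor of $g$ is where the improvement in the exponential weight comes from: by \eqref{eq:gh-E} we have $\|g^2\|_{\cH_{T_0, 2\delta_0}} = \|g \cdot g\|_{\cH_{T_0, \delta_0 + \delta_0}} \lesssim \|g\|_{\cH_{T_0, \delta_0}}^2$, and then one more application of \eqref{eq:bh-E} — which respects the weight $2\delta_0$, since \eqref{eq:bh-E} is stated for an arbitrary $\delta$ — yields $\|\wt b\, g^2\|_{\cH_{T_0, 2\delta_0}} \lesssim \|\wt b\|_X \|g^2\|_{\cH_{T_0, 2\delta_0}} \lesssim \|g\|_{\cH_{T_0, \delta_0}}^2$, which is \eqref{eq:taylor-2}.

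The only genuine subtlety — the main (minor) obstacle — is the smoothness/approximation question: the formulas above involve $F'(\phi + \theta g)$ etc., so one should first check that $\phi + \theta g$ really is a legitimate element of the relevant space with $\partial_t, \partial_x$ in $L^\infty_t L^2_x$, that the integral in $\theta$ commutes with these derivatives and with the $\cH_{T_0, \delta_0}$ norm (Minkowski's integral inequality), and that composition with $F' , F''$ of such functions lands in $X$. For the first point I would note that $\phi$ enters only through $\|\phi\|_X \leq C_0$, and $g \in \cH_{T_0, \delta_0} = \cH^1_{T_0,\delta_0}$ has $\partial_t g, \partial_x g \in L^\infty_t L^2_x$ as well; the chain rule applied to $F'(\phi + \theta g)$ then produces exactly the $L^\infty_t L^2_x$ terms estimated above, with all constants depending only on $\sup$'s of $F$ and its derivatives on a fixed compact interval. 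For the interchange of $\int_0^1 \ud\theta$ with the norm one can either invoke Minkowski directly or approximate $g$ by smooth functions (as is done elsewhere in the paper, e.g. the mollification remark after \eqref{eq:X-def}) and pass to the limit. None of this requires any new idea beyond \eqref{eq:bh-L2}, \eqref{eq:bh-E}, \eqref{eq:gh-E} and the embedding $H^1(\bR) \subset L^\infty(\bR)$.
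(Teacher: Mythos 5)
Your proposal is correct and follows essentially the same route as the paper: Taylor expansion with integral remainder, chain-rule bounds on the $X$-norm of the remainder factor (using $\|\phi\|_X \leq C_0$ and $\|g\|_{L^\infty_{t,x}} \lesssim 1$ from $H^1(\bR)\subset L^\infty(\bR)$), and then \eqref{eq:bh-E} together with \eqref{eq:gh-E} to gain the weight $2\delta_0$. The only cosmetic difference is that you prove \eqref{eq:taylor-1} directly from the first-order remainder formula, whereas the paper deduces it from \eqref{eq:taylor-2} and \eqref{eq:bh-E} applied to $F'(\phi)g$.
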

\begin{proof}
The bound \eqref{eq:taylor-1} follows from \eqref{eq:taylor-2} and \eqref{eq:bh-E}.

In order to prove \eqref{eq:taylor-2}, we write
\begin{equation}
\label{eq:fund-thm}
F(\phi + g) - F(\phi) - F'(\phi)g = g^2\int_0^1 (1-\sigma)F''(\phi + \sigma g)\ud\sigma.
\end{equation}
By the Chain Rule we have
\begin{equation}
\|F''(\phi + \sigma g)\|_{L_{t, x}^\infty} + \|\partial_t (F''(\phi + \sigma g))\|_{L_t^\infty L_x^2} + \|\partial_x (F''(\phi + \sigma g))\|_{L_t^\infty L_x^2} \lesssim 1,
\end{equation}
so \eqref{eq:bh-E} and \eqref{eq:gh-E} yield the claim.
\end{proof}
\begin{lemma}
\label{lem:cross}
Let $F \in C^\infty(\bR)$.
For any $A \ssubset S^{(K)} \times \bR^K$ and $l, s \in \bN$, there exist $T_0 \in \bR$ and $\delta_0 > 0$ such that
\begin{equation}
\label{eq:cross}
F(H) -\Big[ F(\omega_{n_0}) + \sum_{k=1}^K \big(F(H_k) - F(\omega_{n_{k-1}})\big) \Big] \in B(C^l(A, \cH_{T_0, \delta_0}^s))
\end{equation}
and
\begin{equation}
\label{eq:cross-2}
(F(H) - F(H_j))\partial_x H_j\in B(C^l(A, \cH_{T_0, \delta_0}^s)), \qquad\text{for all }1 \leq j \leq K.
\end{equation}
Moreover, if $0 < \delta_1 \leq \delta_0$ and $g \in C^k(A, \cH_{T_0, \delta_1}^s)$, then
\begin{equation}
\label{eq:cross-3}
F(H + g) - F(H) \in C^k(A, \cH_{T_0, \delta}^s)
\end{equation}
for any $0 < \delta < \delta_1$.
\end{lemma}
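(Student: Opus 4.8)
The strategy is to reduce everything to the two-kink estimates already established in Lemma \ref{lem:baby-cross}, using the fact that far from the $j$-th kink the profile $H_j$ is exponentially close to a constant vacuum, and that the kinks are separated by distances growing linearly in $t$. First I would treat \eqref{eq:cross}. Write the quantity on the left as a telescoping sum: moving from one kink to the next, the difference $F(H) - [F(\omega_{n_0}) + \sum_k (F(H_k) - F(\omega_{n_{k-1}}))]$ can be reorganized into a sum of terms each of which, in the region near the $j$-th kink, looks like $F(H_j + (\text{small})) - F(H_j) - F'(H_j)(\text{small})$ plus cross terms, and in the transition regions between kinks it is a product of two exponentially small factors. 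Concretely, introduce a partition of unity $\{\theta_j\}_{j=0}^K$ subordinate to the regions $x \approx v_j t + a_j$ (one bump per kink plus "tail" pieces), as in the construction of $\chi_j$ in \eqref{eq:cut-off} but with cutoffs at distance $\sim \rho t$. On the support of $\theta_j$, every $H_i$ with $i \neq j$ differs from the appropriate vacuum by $O(\eee^{-c t})$ by Proposition \ref{prop:decaykink} (because $|v_i - v_j| \geq c_0 > 0$ forces the arguments $\gamma_i(x - v_i t - a_i)$ to have modulus $\gtrsim t$), so $H = H_j + r_j$ with $\|r_j\|_X \lesssim \eee^{-ct}$ on that region, and Lemma \ref{lem:baby-cross} applied with $\phi = H_j$, $g = r_j$ (after rescaling so that $\|r_j\|_{\cH} \leq 1$) gives the bound. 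In the overlap/transition regions one uses that $H$ itself is within $O(\eee^{-ct})$ of a vacuum and Taylor-expands $F$ there directly. Choosing $\delta_0$ smaller than the resulting exponential rate $c$ and $T_0$ large makes the $C^l(A, \cH^s_{T_0,\delta_0})$-norm $\leq 1$; the $x$- and $v,a$-derivatives are handled the same way since differentiating $H$ in these variables only produces polynomially-growing prefactors (powers of $t$ from the $\gamma_k(x - v_k t - a_k)$ dependence), which are absorbed by shrinking $\delta_0$.

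For \eqref{eq:cross-2} the mechanism is even more direct: $\partial_x H_j = \gamma_j \partial_x H_{n_{j-1},n_j}(\gamma_j(x - v_j t - a_j))$ is exponentially localized near $x \approx v_j t + a_j$ by Proposition \ref{prop:decaykink}, and on that localization region $H - H_j = \sum_{i \neq j}(H_i - \omega_{\cdot})$ is $O(\eee^{-ct})$ as above, so $F(H) - F(H_j) = O(\eee^{-ct})$ there by the mean value theorem; the product of a bounded-in-$\cH^s$ exponentially localized factor with an $O(\eee^{-ct})$ factor is small in $C^l(A,\cH^s_{T_0,\delta_0})$ for $\delta_0 < c$ and $T_0$ large, again tracking the polynomial losses from the derivatives. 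Finally \eqref{eq:cross-3} follows from \eqref{eq:taylor-1}: apply Lemma \ref{lem:baby-cross} with $\phi = H$ (which satisfies $\|H\|_X \leq C_0$ uniformly on $A$ by Proposition \ref{prop:decaykink}) and $g$ rescaled to the unit ball of $\cH_{T_0,\delta_1}$, obtaining $\|F(H+g)-F(H)\|_{\cH_{T_0,\delta_1}} \lesssim \|g\|_{\cH_{T_0,\delta_1}}$; the $C^k$-dependence on $(\bs v, \bs a)$ and the higher $x,t$-derivatives come from differentiating \eqref{eq:fund-thm} under the integral sign and using that $X$ is an algebra together with \eqref{eq:bh-E}, \eqref{eq:gh-E}, at the cost of passing from $\delta_1$ to any $\delta < \delta_1$ to absorb constants and $t$-powers.

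The main obstacle is bookkeeping the uniformity in $(\bs v, \bs a) \in A$ together with the loss in exponential rate when $(\bs v, \bs a)$-derivatives fall on $H$: each such derivative brings a factor that grows like a power of $t$ (since $H$ depends on $\bs v$ through $\gamma_k(x - v_k t - a_k)$), so the estimates only close after sacrificing an arbitrarily small amount of decay, which forces the "$\delta < \delta_1$" and "$\delta_0$ depending on $l, s$" formulations in the statement. The separation estimate $|x - v_i t - a_i| \gtrsim t$ on $\supp \theta_j$ for $i \neq j$, which is what converts spatial localization of one kink into exponential smallness of the others, must be made quantitative and uniform over the compact set $A$; this is where the hypothesis $A \ssubset S^{(K)} \times \bR^K$ (so that $\min_{i \neq j}|v_i - v_j|$ is bounded below) is essential.
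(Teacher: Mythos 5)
Your plan is correct in substance and rests on the same two pillars as the paper's argument: the exponential decay of each kink from Proposition~\ref{prop:decaykink} combined with the linear-in-$t$ separation of the kink centers forced by the strict ordering of the velocities (uniform over the compact closure of $A$), and the absorption of the polynomial-in-$t$ factors produced by $\partial_{\bs v},\partial_{\bs a},\partial_t,\partial_x$ derivatives by slightly decreasing the exponential weight. The organization differs, though. You localize with a partition of unity at scale $\rho t$ and argue region by region: near the $j$-th kink all other kinks are uniformly $O(\eee^{-ct})$, while in the transition zones one needs the product of two tails decaying toward each other. The paper avoids cutoffs altogether: its Step~1 proves the three pairwise bounds \eqref{eq:cross-step1}--\eqref{eq:cross22-step1} for $j<k$ by adding the two one-sided exponents; \eqref{eq:cross} then follows by induction on $K$ from the identity $F(\phi_1+\phi_2-\phi_3)-F(\phi_1)-F(\phi_2)+F(\phi_3)=(\phi_1-\phi_3)(\phi_2-\phi_3)\iint F''$, and \eqref{eq:cross-2} from a telescoping rearrangement plus a first-order mean-value factorization, so everything reduces to the pairwise products. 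Your route is more geometric; the paper's buys cleaner bookkeeping (no cutoff derivatives to track, and the induction produces exactly the pairwise interaction terms). For \eqref{eq:cross-3} the two arguments coincide: first-order integral formula, differentiation under the integral, and a small loss in $\delta$.

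One spot to tighten: you cannot apply Lemma~\ref{lem:baby-cross} with $\phi=H_j$ and $g=r_j=H-H_j$, even after rescaling, because for $K\geq 2$ the remainder $r_j$ has a nonzero limit as $x\to-\infty$ or $x\to+\infty$, so it is not in $\cH_{T,\delta}$ at all; moreover Lemma~\ref{lem:baby-cross} is a global-in-$x$ statement and does not see your localization. What your partition of unity actually requires is the pointwise mean-value bound $|F(H)-F(H_j)|\lesssim|r_j|\lesssim\eee^{-ct}$ on $\supp\theta_j$, together with the corresponding bounds on derivatives (or, equivalently, the paper's pairwise product estimates); with that replacement your argument goes through. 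Likewise, in \eqref{eq:cross-3} the bound \eqref{eq:taylor-1} is not homogeneous in $g$, so ``rescaling $g$ to the unit ball'' should be replaced by observing that the constant depends only on a bound for $\|g\|$ through the $X$-norm of $H+\sigma g$; this is harmless.
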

\begin{proof}
\textbf{Step 1.}
We prove that, if $j < k$, then
\begin{align}
\label{eq:cross-step1}
(H_j - \omega_{n_j})(H_k - \omega_{n_{k-1}}) \in B(C^k(A, \cH_{T_0, \delta_0}^s)), \\
\label{eq:cross21-step1}
(H_j - \omega_{n_j})\partial_x H_k \in B(C^k(A, \cH_{T_0, \delta_0}^s)), \\
\label{eq:cross22-step1}
(H_k - \omega_{n_{k-1}})\partial_x H_j \in B(C^k(A, \cH_{T_0, \delta_0}^s)).
\end{align}
By Proposition~\ref{prop:decaykink}, together with the Chain and Leibniz Rules,
\begin{equation}
\sum_{|\bs p| + |\bs q| \leq l}\sum_{r + u \leq s}|\partial_{\bs v}^{\bs p}\partial_{\bs a}^{\bs q}\partial_t^r\partial_x^u (H_j(\bs v, \bs a; t, x) - \omega_{n_j})| \leq \eee^{-\delta(x - v_j t - a_j)_+}.
\end{equation}
Indeed, if $|\bs p| + |\bs q| + r + u > 0$ then $\partial_{\bs v}^{\bs p}\partial_{\bs a}^{\bs q}\partial_t^r\partial_x^u(H_j(\bs v, \bs a; t, x) - \omega_{n_j})$
is the sum of a finite number of terms, each of which is a polynomial in $(t, x)$ with bounded coefficients depending on $(\bs v, \bs a)$, multiplied by $\partial_x^n H_j$ for some $n\in\bN$.
Similarly,
\begin{equation}
\sum_{|\bs p| + |\bs q| \leq l}\sum_{r + u \leq s}|\partial_{\bs v}^{\bs p}\partial_{\bs a}^{\bs q}\partial_t^r\partial_x^u (H_j(\bs v, \bs a; t, x) - \omega_{n_j})| \leq \eee^{-\delta(v_k t + a_k - x)_+}.
\end{equation}
Taking logarithms, we see that $\eee^{-\delta(x - v_j t - a_j)_+}\eee^{-\delta(v_k t + a_k - x)_+}\lesssim\eee^{-\delta_0 t}$,
if $T_0$ is large enough and $\delta_0$ small enough.

The proofs of \eqref{eq:cross21-step1} and \eqref{eq:cross22-step1} are very similar, so we skip them.

\noindent\textbf{Step 2.}
We proceed by induction with respect to $K$. If $K = 1$,
the left hand side is identically $0$, so let $K \geq 2$.

Note the formula
\begin{equation}
\begin{aligned}
&F(\phi_1 + \phi_2 - \phi_3) - F(\phi_1) - F(\phi_2) + F(\phi_3) \\
&= (\phi_1-\phi_3)(\phi_2-\phi_3)\int_0^1\int_0^1 F''(\sigma_1\phi_1 + \sigma_2\phi_2 + (1-\sigma_1 - \sigma_2)\phi_3)\ud \sigma_1\ud \sigma_2,
\end{aligned}
\end{equation}
obtained by applying the Fundamental Theorem twice.
Applying it with $\phi_2 := H_K$, $\phi_3 := \omega_{n_{k-1}}$ and
\begin{equation}
\phi_1 := \wt H := F(\omega_{n_0}) + \sum_{k=1}^{K-1} \big(F(H_k) - F(\omega_{n_{k-1}})\big),
\end{equation}
we obtain
\begin{equation}
\begin{aligned}
&F(H) - F(\wt H) - F(H_K) + F(\omega_{n_{K-1}}) \\
&= (\wt H - \omega_{n_{K-1}})(H_K - \omega_{n_{K-1}})\int_0^1\int_0^1 F''(\sigma_1\wt H + \sigma_2 H_K + (1-\sigma_1 - \sigma_2)\omega_{n_{K-1}})\ud \sigma_1\ud \sigma_2.
\end{aligned}
\end{equation}
The last integral is bounded, and all its derivatives as well.
Observing that $\wt H - \omega_{n_{K-1}} = \sum_{j=1}^{K-1}(H_{j} - \omega_{n_j})$
and using $K-1$ times \eqref{eq:cross-step1}, we obtain \eqref{eq:cross}.

\noindent\textbf{Step 3.}
In order to prove \eqref{eq:cross-2}, since $\partial_x H_j$ and all its derivatives
in $\bs v, \bs a, t, x$ are bounded by a polynomial in $t$, it suffices to check that
\begin{equation}
\label{eq:cross-22}
\Big(F(\omega_{n_0}) + \sum_{k=1}^K\big(F(H_k) - F(\omega_{n_{k-1}})\big) - F(H_j)\Big)\partial_x H_j\in B(C^l(A, \cH_{T_0, \delta_0}^s)), \qquad\text{for all }1 \leq j \leq K.
\end{equation}
But we observe that
\begin{equation}
F(\omega_{n_0}) + \sum_{k=1}^K\big(F(H_k) - F(\omega_{n_{k-1}})\big) - F(H_j) = \sum_{k=1}^{j-1}\big(F(H_k) - F(\omega_{n_{k}})\big) + \sum_{k=j+1}^{K}\big(F(H_k) - F(\omega_{n_{k-1}})\big),
\end{equation}
hence it suffices to prove that
\begin{equation}
\label{eq:cross-23}
\big(F(H_k) - F(\omega_{n_{k}})\big)\partial_x H_j\in B(C^l(A, \cH_{T_0, \delta_0}^s)), \qquad\text{for all }1 \leq k < j \leq K.
\end{equation}
and
\begin{equation}
\label{eq:cross-24}
\big(F(H_k) - F(\omega_{n_{k-1}})\big)\partial_x H_j\in B(C^l(A, \cH_{T_0, \delta_0}^s)), \qquad\text{for all }1 \leq j < k \leq K.
\end{equation}
We have $F(H_k) - F(\omega_{n_k}) = (H_k - \omega_{n_k})\int_0^1 F'((1-\sigma)\omega_{n_k} + \sigma H_k)\ud\sigma$, and the integral is bounded together with all its derivatives by a polynomial in $t$,
hence \eqref{eq:cross-23} follows from \eqref{eq:cross21-step1}.
Similarly, \eqref{eq:cross-24} follows from \eqref{eq:cross22-step1}.

\noindent\textbf{Step 4.} Finally, in order to prove \eqref{eq:cross-3},
we write
\begin{equation}
\label{eq:fund-thm-2}
F(H + g) - F(H) = g\int_0^1 F'(H + \sigma g)\ud\sigma.
\end{equation}
In order to prove continuity, we observe that $F'(H + \sigma g)$ is continuous from $A$ to the space $X$ defined by \eqref{eq:X-def}, uniformly in $\sigma$, so it suffices to apply \eqref{eq:bh-E}.

Next, we differentiate under the integral sign in \eqref{eq:fund-thm-2}, using again the fact that we can absorb
polynomials in $t$ by diminishing~$\delta$.
\end{proof}

\begin{lemma}
\label{lem:N-contr}
For any $A \ssubset S^{(K)}\times \bR^K$, $\delta_0$ small enough (depending on $A$)
and $0 < \delta < \delta_0$,
there exist $T_0$ and $\delta_2 > \delta_0$ such that if $T \geq T_0$, then
for all $g, \sh g \in B(\cH_{T, \delta_0})$ and $(\bs v, \bs a), (\sh{\bs v}, \sh{\bs a}) \in A$
\begin{gather}
\|N(\bs v, \bs a; \sh g) - N(\bs v, \bs a; g)\|_{\cH_{T, \delta_2}} \leq \|\sh g - g\|_{\cH_{T, \delta_0}}, \label{eq:N-contr-1} \\
\|N(\bs v, \sh{\bs a}; g) - N(\bs v, \bs a; g)\|_{\cH_T, \delta_0} \leq |\sh{\bs a} - \bs a|, \label{eq:N-contr-2} \\
\|N(\sh{\bs v}, \bs a; g) - N(\bs v, \bs a; g)\|_{\cH_T, \delta} \leq |\sh{\bs v} - \bs v|. \label{eq:N-contr-3}
\end{gather}
\end{lemma}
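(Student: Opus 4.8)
The plan is to exploit the explicit structure of $N$ in \eqref{eq:N-def}, namely
\[
N(\bs v, \bs a; g) = -\big(W'(H(\bs v, \bs a) + g) - W'(H(\bs v, \bs a))\big) + \Big(\sum_{k=1}^K W'(H_k(\bs v, \bs a)) - W'(H(\bs v, \bs a))\Big),
\]
where the second parenthesis is independent of $g$. For \eqref{eq:N-contr-1}, only the first bracket matters, and the difference $N(\bs v, \bs a; \sh g) - N(\bs v, \bs a; g)$ equals $-\big(W'(H + \sh g) - W'(H + g)\big) + W'(H)\cdot 0$, which we rewrite via the Fundamental Theorem of Calculus as $-(\sh g - g)\int_0^1 W''(H + g + \sigma(\sh g - g))\,\ud\sigma$. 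Since $H$ is bounded in $X$ uniformly for $(\bs v, \bs a) \in \conj A$ (by Proposition~\ref{prop:decaykink}) and $g, \sh g$ are in the unit ball of $\cH_{T, \delta_0}$, the integral has $X$-norm $\lesssim 1$; but we actually want a \emph{gain} in the exponential weight, turning $\delta_0$ into some $\delta_2 > \delta_0$. For this I would not use $W''$ directly but instead note $W''(H + g + \sigma(\sh g - g)) - W''(\wt H_{\text{vac}})$ is small where $\wt H_{\text{vac}}$ is the relevant vacuum value — more precisely, decompose as in \eqref{eq:cross-3} of Lemma~\ref{lem:cross}: the quantity $W''(H + \theta) - \big[\text{sum of vacuum-corrected pieces}\big]$ lies in $C^0(A, \cH_{T_0, \delta_0'}^0)$ for some $\delta_0' > 0$ after shrinking, so multiplying by $\sh g - g \in \cH_{T, \delta_0}$ and using \eqref{eq:gh-E} produces a term in $\cH_{T, \delta_0 + \delta_0'}$; absorbing the (bounded but not decaying) vacuum pieces times $\sh g - g$, which merely reproduce $\cH_{T, \delta_0}$, we still need the \emph{coefficient} of $(\sh g - g)$ in those terms to be small, which it is after taking $T_0$ large because $\|g\|_{\cH_{T,\delta_0}}\le 1$ forces $\|g\|_{L^\infty_{t,x}}$ small and $W''$ near a vacuum is close to $m^2$ — wait, that does not decay. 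The cleaner route: the $g$-independent vacuum pieces of $W''$ contribute $m_{n_k}^2$-type constants, so $W'(H+\sh g)-W'(H+g) = (\sh g - g)\big(m_{\cdot}^2 + (\text{exponentially small in }t)\big)$ pointwise in each region; the constant part is handled by noting the whole map $g \mapsto W'(H+g)-W'(H)$ already has the structure allowing the contraction constant to be made $\le 1$ by choosing $T_0$ large (so $\|g\|_{L^\infty}$ small) and then the residual decaying part gives the genuine gain $\delta_2 > \delta_0$. So the precise statement I would prove is: $\int_0^1 W''(H + g + \sigma(\sh g - g))\,\ud\sigma = c(t,x) + e(\bs v,\bs a;t,x)$ with $\|c\|_{L^\infty_{t,x}}$ uniformly bounded and $e \in \cH^0_{T_0, \delta_3}$ for some $\delta_3 > 0$; then $\|N(\bs v, \bs a; \sh g) - N(\bs v, \bs a; g)\|_{\cH_{T,\delta_2}} \le \|c\|_{L^\infty}\|\sh g - g\|_{\cH_{T,\delta_2}} + \|e\|_{\cH^0_{T_0,\delta_3}}\|\sh g - g\|_{\cH_{T,\delta_0}}$ with $\delta_2 := \delta_0 + \delta_3$; but $\|\sh g - g\|_{\cH_{T,\delta_2}} \le \eee^{-(\delta_2 - \delta_0)T}\|\sh g - g\|_{\cH_{T,\delta_0}}$, so for $T_0$ large the first term has coefficient $< \tfrac12$, and $\|e\|$ can also be made $< \tfrac12$, giving \eqref{eq:N-contr-1}.

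For \eqref{eq:N-contr-2} and \eqref{eq:N-contr-3} I would instead use the Mean Value Theorem in the \emph{parameters}: write
\[
N(\sh{\bs v}, \sh{\bs a}; g) - N(\bs v, \bs a; g) = \int_0^1 \big(\partial_{\bs v, \bs a} N\big)\big((1-\sigma)(\bs v, \bs a) + \sigma(\sh{\bs v}, \sh{\bs a}); g\big) \cdot \big((\sh{\bs v}, \sh{\bs a}) - (\bs v, \bs a)\big)\,\ud\sigma,
\]
so it suffices to bound $\|\partial_{a_k} N(\bs v, \bs a; g)\|_{\cH_{T, \delta_0}}$ and $\|\partial_{v_k} N(\bs v, \bs a; g)\|_{\cH_{T, \delta}}$ uniformly, and then take $T_0$ large enough that these norms are $\le 1$ (they carry a factor $\eee^{-(\delta_0 - \delta)T}$ or similar after adjusting the weight, or more directly: the $g$-dependent and $g$-independent parts of $\partial_{\bs a}N$ both have the exponential-decay-and-smooth-in-parameters structure of Lemma~\ref{lem:cross}, so they live in $C^0(A, \cH_{T_0, \delta_0'}^0)$ for some $\delta_0' > \delta_0$ after shrinking, hence multiplying by $\le 1$ and comparing weights gives a constant $\le 1$). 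Concretely, $\partial_{a_k} N = W''(H+g)\partial_{a_k}H - \sum W''(H_k)\partial_{a_k}H_k - (\partial_{a_k}V)g - V\partial_{a_k}g$… — actually $g$ here is \emph{fixed}, so $\partial_{a_k}g = 0$, and one just differentiates the explicit $H$-dependent expressions; each resulting term is a product of a polynomially-bounded-in-$t$ smooth function with either a kink-difference factor (exponentially decaying by Step 1 of Lemma~\ref{lem:cross}) or with $g$ itself, and in the latter case the factor multiplying $g$ is $W''(H+g) - W''(H)$-type, which is $O(\|g\|_{L^\infty})$, small for $T_0$ large. The $\partial_{v_k}$ case is identical except that $\partial_{v_k}H_j$ picks up an extra factor of $x - v_j t - a_j$ (from the chain rule through $\gamma_j(x - v_j t - a_j)$), i.e.\ a polynomial in $t$, which is why we lose a bit of exponential rate and land in $\cH_{T,\delta}$ with $\delta < \delta_0$ rather than $\cH_{T,\delta_0}$ — exactly as flagged in the parenthetical remark in the proof of Lemma~\ref{lem:reg-lin}.

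The main obstacle is bookkeeping the exponential weights so that each of the three contraction/Lipschitz constants comes out $\le 1$ rather than merely $\lesssim 1$: the point in every case is that we have a \emph{spare} bit of exponential decay (from kink-difference factors via Lemma~\ref{lem:cross}, giving $\delta_0' > \delta_0$, or from the smallness of $\|g\|_{L^\infty}$ when $T_0$ is large), and we trade it against the weight mismatch $\eee^{-(\text{positive})T_0}$ to make the constants as small as we wish. So the real content is: (i) apply the Fundamental Theorem / Mean Value Theorem to linearize in $g$ or in the parameters; (ii) invoke Lemma~\ref{lem:cross} (Steps 1–3) to see that every resulting coefficient is either bounded-with-small-$L^\infty$-norm-for-large-$T_0$ or exponentially decaying; (iii) choose $\delta_2 := \delta_0 + \delta_0'$ (resp.\ keep $\delta_0$, resp.\ take $\delta < \delta_0$ to absorb the polynomial from $\partial_{v_k}$) and $T_0$ large. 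No genuinely new estimate beyond Lemmas~\ref{lem:baby-cross} and~\ref{lem:cross} is needed; it is a matter of assembling them with the right weights.
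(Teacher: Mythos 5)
Your opening identity for $N$ is not the one in \eqref{eq:N-def}: you have silently dropped the term $V(\bs v, \bs a)g$, and this is fatal for \eqref{eq:N-contr-1}. With the actual $N$ one has $\partial_g N(\bs v,\bs a;g) = -W''(H+g) + V$, so the Fundamental Theorem gives $N(\bs v,\bs a;\sh{g}) - N(\bs v,\bs a;g) = (\sh{g}-g)\int_0^1\big({-}W''(H+(1-\sigma)g+\sigma \sh{g}) + V\big)\ud\sigma$, and the whole point of the lemma is the cancellation inside this integrand: writing it as $\big({-}W''(H+\cdots)+W''(H)\big) + \big({-}W''(H)+V\big)$, the first bracket is bounded in $\cH_{T,\delta_0}$ by Lemma~\ref{lem:baby-cross} and the second is bounded in $\cH_{T,\delta_3}$ for some $\delta_3>0$ by Lemma~\ref{lem:cross} with $F=W''$ (because $V$ is exactly the vacuum-corrected sum of the $W''(H_k)$), so the entire coefficient of $\sh{g}-g$ decays exponentially; \eqref{eq:gh-E} then yields the gain $\delta_2>\delta_0$, and enlarging $T_0$ gives the constant $1$. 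In your version the coefficient is $\int_0^1 W''(H+g+\sigma(\sh{g}-g))\ud\sigma$, which tends to the masses $m_{n_j}^2$ in the regions away from the kinks; no largeness of $T_0$ or smallness of $\|g\|_{L^\infty}$ removes this, as you yourself noticed mid-proof. Your attempted repair then rests on the inequality $\|\sh{g}-g\|_{\cH_{T,\delta_2}}\le \eee^{-(\delta_2-\delta_0)T}\|\sh{g}-g\|_{\cH_{T,\delta_0}}$ with $\delta_2>\delta_0$, which is false and in fact backwards: the weighted norms compare only in the direction of decreasing $\delta$, and a generic element of $\cH_{T,\delta_0}$ has infinite $\cH_{T,\delta_2}$-norm. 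So the term $\|c\|_{L^\infty}\|\sh{g}-g\|_{\cH_{T,\delta_2}}$ in your ``precise statement'' is not controlled, and indeed \eqref{eq:N-contr-1}, with its upgraded weight, is simply false for the truncated $N$ you wrote down; the $Vg$ term is what makes it true.

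Your treatment of \eqref{eq:N-contr-2} and \eqref{eq:N-contr-3} is essentially the paper's: differentiate in the parameters and integrate, with $\partial_{a_k}N(\bs v,\bs a;g) = \big({-}W''(H+g)+W''(H_k)+W'''(H_k)g\big)\partial_{a_k}H_k$ (note again that the $W'''(H_k)g\,\partial_{a_k}H_k$ piece comes from the $Vg$ term and is needed for the cancellation), each factor handled by Lemmas~\ref{lem:baby-cross} and~\ref{lem:cross}, and the polynomial-in-$t$ growth of $\partial_{v_k}H_k$ absorbed by passing from $\delta_0$ to $\delta<\delta_0$. Once the first estimate is redone with the correct $N$ along the same lines, the proof goes through.
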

\begin{proof}
We have
\begin{equation}
\partial_g N(\bs v, \bs a; g) = -W''(H(\bs v, \bs a) + g) + V(\bs v, \bs a),
\end{equation}
thus
\begin{equation}
\begin{aligned}
& N(\bs v, \bs a; \sh g) - N(\bs v, \bs a; g) = \\
&= (\sh g - g)\int_0^1\big({-}W''(H(\bs v, \bs a) + (1-s)g + s\sh g) + V(\bs v, \bs a)\big)\ud s.
\end{aligned}
\end{equation}
We rewrite the integrand as
\begin{equation}
\big({-}W''(H(\bs v, \bs a) + (1-s)g + s\sh g) + W''(H(\bs v, \bs a))\big) + 
\big({-} W''(H(\bs v, \bs a)) + V(\bs v, \bs a)\big).
\end{equation}
By Lemma~\ref{lem:baby-cross}, the first term is uniformly bounded in $\cH_{T, \delta_0}$.
By Lemma~\ref{lem:cross} applied with $F = W''$,
the second term is uniformly bounded in $\cH_{T, \delta_3}$ for some $\delta_3 > 0$.
Hence, \eqref{eq:N-contr-1} follows from \eqref{eq:gh-E}.

We have
\begin{equation}
\partial_{a_k} N(\bs v, \bs a; g) = ({-}W''(H(\bs v, \bs a)+ g) + W''(H_k(\bs v, \bs a)) + W'''(H_k(\bs v, \bs a))g)\partial_{a_k}H_k(\bs v, \bs a).
\end{equation}
Similarly as above, Lemmas \ref{lem:baby-cross} and \ref{lem:cross} yield
uniform boundedness of the right hand side in $\cH_{T, \delta_0}$, proving \eqref{eq:N-contr-2}.
Analogously, $\partial_{v_k} N(\bs v, \bs a; g)$ is uniformly bounded in $\cH_{T, \delta}$,
which proves \eqref{eq:N-contr-3}.
\end{proof}
\begin{proposition}
\label{prop:construct}
For any $A \ssubset S^{(K)} \times \bR^K$, there exist $T_0, \delta_0 > 0$
such that the following is true.
For all $(\bs v, \bs a) \in A$ there exists $\Psi(\bs v, \bs a) \in B(\cH_{T_0, \delta_0})$
such that $\phi = H(\bs v, \bs a) + \Psi(\bs v, \bs a)$ is a solution of \eqref{eq:csf}.
\end{proposition}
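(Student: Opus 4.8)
The plan is to set up Proposition~\ref{prop:construct} as a fixed point problem for the map
\[
\Phi(\bs v, \bs a; g) := R(\bs v, \bs a)\big(N(\bs v, \bs a; g)\big),
\]
where $R(\bs v, \bs a)$ is the solution operator of the linear equation \eqref{eq:lemma31} furnished by Lemma~\ref{lem:nonhom}, and $N$ is the nonlinearity defined in \eqref{eq:N-def}. A function $g = g(\bs v, \bs a) \in \cH_{T_0, \delta_0}$ is a fixed point of $\Phi(\bs v, \bs a; \cdot)$ precisely when it solves \eqref{eq:error-eq}, which by the decomposition \eqref{eq:decomp} is equivalent to $\phi = H(\bs v, \bs a) + g$ being a solution of \eqref{eq:csf}. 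So the whole proposition reduces to producing such a fixed point in the unit ball $B(\cH_{T_0, \delta_0})$, for a suitable choice of $T_0$ large and $\delta_0$ small depending on $A$.

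First I would record that $\Phi(\bs v, \bs a; \cdot)$ maps $B(\cH_{T_0, \delta_0})$ into itself. By Lemma~\ref{lem:nonhom}, $R(\bs v, \bs a)$ is bounded $L^2_{T_0, \delta_0} \to \cH_{T_0, \delta_0}$ with norm uniform in $(\bs v, \bs a) \in A$ and $T_0 \geq $ some threshold, so it suffices to bound $\|N(\bs v, \bs a; g)\|_{L^2_{T_0, \delta_0}}$. Writing $N(\bs v, \bs a; 0) = -W'(H) + \sum_k W'(H_k) + $ (constant terms that telescope, since $W'(\omega_{n_k}) = 0$), Lemma~\ref{lem:cross} applied with $F = W'$ gives $N(\bs v, \bs a; 0) \in B(C^0(A, \cH_{T_0, \delta_3}))$ for some $\delta_3 > 0$; after shrinking, this is $\ll 1$ in $\cH_{T_0, \delta_0}$ once $T_0$ is large. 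For the $g$-dependent part I would use the Lipschitz estimate \eqref{eq:N-contr-1} of Lemma~\ref{lem:N-contr} with $\sh g = g$, $g = 0$: $\|N(\bs v, \bs a; g) - N(\bs v, \bs a; 0)\|_{\cH_{T_0, \delta_2}} \leq \|g\|_{\cH_{T_0, \delta_0}} \leq 1$, and again the gain $\delta_2 > \delta_0$ lets us absorb this into a small constant in $\cH_{T_0, \delta_0}$. Combining, $\|\Phi(\bs v, \bs a; g)\|_{\cH_{T_0, \delta_0}} < 1$ for $T_0$ large enough.

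Next I would check that $\Phi(\bs v, \bs a; \cdot)$ is a contraction on $B(\cH_{T_0, \delta_0})$: for $g, \sh g \in B(\cH_{T_0, \delta_0})$,
\[
\|\Phi(\bs v, \bs a; \sh g) - \Phi(\bs v, \bs a; g)\|_{\cH_{T_0, \delta_0}} \leq \|R(\bs v, \bs a)\| \cdot \|N(\bs v, \bs a; \sh g) - N(\bs v, \bs a; g)\|_{L^2_{T_0, \delta_0}},
\]
and \eqref{eq:N-contr-1} bounds the last factor by $\|\sh g - g\|_{\cH_{T_0, \delta_0}}$ \emph{with} a higher weight $\delta_2 > \delta_0$; converting that extra decay into a small multiplicative constant (again by enlarging $T_0$) makes the product a strict contraction. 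By the Banach fixed point theorem there is a unique $\Psi(\bs v, \bs a) \in B(\cH_{T_0, \delta_0})$ with $\Phi(\bs v, \bs a; \Psi(\bs v, \bs a)) = \Psi(\bs v, \bs a)$, and unwinding the definitions shows $\phi = H(\bs v, \bs a) + \Psi(\bs v, \bs a)$ solves \eqref{eq:csf} in the distributional sense; the regularising effect of the wave equation upgrades this to a genuine solution. I expect the main obstacle, which is really already dispatched by the machinery set up earlier, to be the bookkeeping of exponential weights: each nonlinear estimate must gain a bit of decay ($\delta \to \delta_2 > \delta_0$, or $\delta \to 2\delta_0$ for the quadratic terms) so that the gain can be traded — via the $\eee^{-(\delta_2 - \delta_0)t} \leq \eee^{-(\delta_2-\delta_0)T_0}$ factor — for the small constant needed to close both the self-mapping and the contraction. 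Once $\delta_0$ is fixed small enough relative to $A$ (so that Lemmas~\ref{lem:nonhom}, \ref{lem:N-contr}, \ref{lem:cross} all apply) and $T_0$ is taken large, everything closes uniformly in $(\bs v, \bs a) \in A$.
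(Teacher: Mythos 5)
Your proposal is correct and follows essentially the same route as the paper: both set up the fixed point map $\Phi(g) = R(\bs v, \bs a)N(\bs v, \bs a; g)$ and close a contraction in $B(\cH_{T_0,\delta_0})$ by combining Lemma~\ref{lem:nonhom} with the Lipschitz estimate of Lemma~\ref{lem:N-contr}, trading the gain in the exponential weight for a small constant via $T_0$ large, uniformly in $(\bs v, \bs a) \in A$. The only difference is that you spell out the self-mapping step (bounding $N(\bs v,\bs a;0)$ via Lemma~\ref{lem:cross} with $F = W'$), which the paper leaves implicit.
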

\begin{proof}
Fix $(\bs v, \bs a) \in S^{(K)}\times \bR^K$, and let $R(\bs v, \bs a)$ be given by Lemma~\ref{lem:nonhom}.

Given $T \in \bR$, $\delta > 0$ and $g \in B(\cH_{T, \delta})$, we define
\begin{equation}
\Phi(g) := R(\bs v, \bs a)N(\bs v, \bs a; g).
\end{equation}
It follows that $\phi$ solves \eqref{eq:csf} if and only if $g = \Phi(g)$.

By Lemmas~\ref{lem:nonhom} and \ref{lem:N-contr}, if $T$ is large enough, then $\Phi$ is a contraction in $B(\cH_{T, \delta})$.
Since the constants in Lemmas~\ref{lem:nonhom} and \ref{lem:N-contr} are uniform in $(\bs v, \bs a) \in A$,
$T_0$ and $\delta_0$ can be chosen uniformly for $(\bs v, \bs a) \in A$.
\end{proof}

\begin{proof}[Proof of Theorem~\ref{thm:construct}]
The ``existence'' part follows from Proposition~\ref{prop:construct}, applied for $A$
being the singleton $\{(\bs v, \bs a)\}$.

Let $\psi \in \cH_{T, \delta}$ be such that $\phi = H(\bs v, \bs a) + \psi$ solves \eqref{eq:csf}.
Without loss of generality, we can assume $T \geq T_0$ and $\delta \leq \delta_0$,
so that $\Psi(\bs v, \bs a) \in B(\cH_{T, \delta})$.
Upon modifying $\delta$ and $T$, we can assume $\psi \in B(\cH_{T, \delta_0})$.

If $T$ is large enough, then $\Phi$ is a contraction on this set, implying uniqueness.
\end{proof}
\begin{lemma}
\label{lem:rhs-smooth}
For any $A \ssubset S^{(K)} \times \bR^K$, $l, s \in \bN$,
$T_0$ sufficiently large, $\delta_0$ sufficiently small (both depending on $A$) and $g \in C^k(A, \cH_{T_0, \delta_0}$)
\begin{equation}
-(W''(H + g) - W''(H_k))\partial_{a_k} H_k \in C^k(A, \cH_{T_0, \delta}^s)
\end{equation}
and
\begin{equation}
-(W''(H + g) - W''(H_k))\partial_{v_k} H_k \in C^k(A, \cH_{T_0, \delta}^s)
\end{equation}
for all $0 < \delta < \delta_0$.
\end{lemma}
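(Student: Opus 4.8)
The plan is to reduce everything to Lemma~\ref{lem:cross} (applied with $F=W''$), which is the tool designed for exactly these compositions in the weighted spaces. First I would split
\begin{equation}
W''(H+g)-W''(H_k)=\big(W''(H+g)-W''(H)\big)+\big(W''(H)-W''(H_k)\big),
\end{equation}
so that each of the two quantities in the statement becomes a sum of two pieces. Once multiplied by $\partial_x H_k$, the second summand is exactly what \eqref{eq:cross-2} controls (with $j=k$), and the first summand is exactly what \eqref{eq:cross-3} controls (using $g\in C^k(A,\cH^s_{T_0,\delta_0})$). So the only genuine work is to pass from $\partial_x H_k$ to the parameter derivatives $\partial_{a_k}H_k$ and $\partial_{v_k}H_k$.

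To that end I would record the identities $\partial_{a_k}H_k=-\partial_x H_k$ and $\partial_{v_k}H_k=\big(v_k\gamma_k^2(x-v_k t-a_k)-t\big)\partial_x H_k$, obtained by differentiating $H_k=H_{n_{k-1},n_k}(\gamma_k(x-v_k t-a_k))$. Thus each of $\partial_{a_k}H_k$, $\partial_{v_k}H_k$ is $\partial_x H_k$ times a polynomial in $(t,x-v_k t-a_k)$ of degree at most $1$ with coefficients bounded on $\conj A$; moreover, all their $(\bs v,\bs a,t,x)$-partial derivatives are bounded by a polynomial in $t$ times $\eee^{-m|x-v_k t-a_k|}$ for some $m>0$ (by Proposition~\ref{prop:decaykink}). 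The structural point to emphasize is this exponential spatial localisation: although neither $g$ nor $W''(H+g)-W''(H)$ carries any pointwise decay in $x$, multiplying by $\partial_{a_k}H_k$ (or $\partial_{v_k}H_k$) restores $x$-integrability, so that by the product estimates \eqref{eq:bh-E}, \eqref{eq:gh-E} the result lies in the weighted spaces, the only price being the absorption of a polynomial-in-$t$ factor by an arbitrarily small decrease of the exponential rate $\delta$. (This is the rate loss already noted in the proof of Lemma~\ref{lem:reg-lin} for $\partial_{v_k}$.)

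The assembly is then routine. Choose $T_0$ large and $\delta_0$ small so that Lemma~\ref{lem:cross} applies with $F=W''$ and the hypothesis on $g$ makes sense, fix $0<\delta<\delta_0$, and pick $\delta''\in(\delta,\delta_0)$. For the piece from $W''(H)-W''(H_k)$: \eqref{eq:cross-2} gives $(W''(H)-W''(H_k))\partial_x H_k\in B(C^k(A,\cH^s_{T_0,\delta_0}))$, and multiplying by the degree $\le 1$ polynomial factor lands it in $C^k(A,\cH^s_{T_0,\delta})$. For the piece from $W''(H+g)-W''(H)$: \eqref{eq:cross-3} gives $W''(H+g)-W''(H)\in C^k(A,\cH^s_{T_0,\delta''})$, and multiplying by $\partial_{a_k}H_k$ resp.\ $\partial_{v_k}H_k$ lands it in $C^k(A,\cH^s_{T_0,\delta})$. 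Adding the two pieces yields both claimed memberships, for every $0<\delta<\delta_0$.

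The only step requiring genuine care — hence the main obstacle — is the bookkeeping of spatial versus temporal decay in the second paragraph: one must \emph{not} expect $g$ (nor $W''(H+g)-W''(H)$) to decay in $x$, and must instead exploit the exponential localisation supplied by $\partial_x H_k$ to make the products $x$-integrable, while checking that the accompanying polynomial weight forces only the harmless replacement of $\delta_0$ by a slightly smaller $\delta$. Everything else is a direct appeal to Lemma~\ref{lem:cross} together with the algebra estimates and the ``polynomials in $t$ are absorbed by diminishing $\delta$'' device used repeatedly above.
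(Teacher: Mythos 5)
Your proposal is correct and follows essentially the same route as the paper: the same splitting $W''(H+g)-W''(H_k)=\big(W''(H+g)-W''(H)\big)+\big(W''(H)-W''(H_k)\big)$, with the second piece handled by Lemma~\ref{lem:cross} (via \eqref{eq:cross-2} with $F=W''$) and the first by \eqref{eq:cross-3}, polynomial-in-$t$ factors being absorbed by slightly decreasing $\delta$. The only difference is that you make explicit the identities $\partial_{a_k}H_k=-\partial_x H_k$ and $\partial_{v_k}H_k=\big(v_k\gamma_k^2(x-v_kt-a_k)-t\big)\partial_x H_k$ and the role of the exponential localisation of $\partial_x H_k$, a step the paper leaves implicit.
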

\begin{proof}
By Lemma~\ref{lem:cross},
\begin{align}
-(W''(H) - W''(H_k))\partial_{a_k} H_k &\in C^k(A, \cH_{T_0, \delta_0}^s), \\
-(W''(H) - W''(H_k))\partial_{v_k} H_k &\in C^k(A, \cH_{T_0, \delta_0}^s).
\end{align}
By \eqref{eq:cross-3}, we have
\begin{align}
-(W''(H+g) - W''(H))\partial_{a_k} H_k &\in C^k(A, \cH_{T_0, \delta}^s), \\
-(W''(H+g) - W''(H))\partial_{v_k} H_k &\in C^k(A, \cH_{T_0, \delta}^s).
\end{align}
Taking the sum, we obtain the conclusion.
\end{proof}
\begin{lemma}
\label{lem:smooth}
Let $\Psi: A \times (T_0, \infty) \times \bR \to \bR$
be the function constructed in Proposition~\ref{prop:construct}.
Then $\partial_{a_k} \Psi$ exists for all $(\bs v, \bs a) \in A$ and solves the equation
\begin{equation}
\label{eq:dag}
\partial_t^2 \partial_{a_k} \Psi - \partial_x^2 \partial_{a_k} \Psi+ W''(H + \Psi)\partial_{a_k} \Psi
= -(W''(H + \Psi) - W''(H_k))\partial_{a_k} H_k.
\end{equation}
Moreover, $\partial_x \Psi = {-}\sum_{k=1}^K \partial_{a_k}\Psi$ and $\partial_t \Psi = {-}\sum_{k=1}^K v_k\partial_{a_k}\Psi$.

Futhermore, $\partial_{v_k} \Psi$ exists for all $(\bs v, \bs a) \in A$ and solves the equation
\begin{equation}
\label{eq:dvg}
\partial_t^2 \partial_{v_k}\Psi - \partial_x^2 \partial_{v_k} \Psi + W''(H + \Psi)\partial_{v_k} \Psi
= -(W''(H + \Psi) - W''(H_k))\partial_{v_k} H_k.
\end{equation}

Finally, $\partial_{a_k}\Psi, \partial_{v_k} \Psi \in C(A, \cH_{T_0, \delta})$ if $\delta > 0$ is small enough.
\end{lemma}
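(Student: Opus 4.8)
The plan is to identify $\partial_{a_k}\Psi$ and $\partial_{v_k}\Psi$ with the solutions of \eqref{eq:dag} and \eqref{eq:dvg}, which are exactly what one gets by differentiating \emph{formally} the equation satisfied by $\Psi$, and then to justify the identification by a difference-quotient argument. First I would note where \eqref{eq:dag} comes from: each moving kink $H_k=H_{n_{k-1},n_k}(\gamma_k(x-v_kt-a_k))$ is a Lorentz boost of a static solution and hence solves \eqref{eq:csf}, so $\partial_t^2 H-\partial_x^2 H=-\sum_{k=1}^K W'(H_k)$, and \eqref{eq:error-eq} takes the form $\partial_t^2\Psi-\partial_x^2\Psi+W'(H+\Psi)-\sum_{k=1}^K W'(H_k)=0$; differentiating this in $a_k$ (only the $k$-th summand depends on $a_k$, and $\partial_{a_k}H=\partial_{a_k}H_k$) produces precisely \eqref{eq:dag}, and the same computation in $v_k$ produces \eqref{eq:dvg}. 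As a preliminary, I would record that $\Psi\in C^0(A,\cH_{T_0,\delta_0})$, and is Lipschitz in the shifts $\bs a$ into $\cH_{T_0,\delta_0}$ and in the velocities $\bs v$ into $\cH_{T_0,\delta}$ for small $\delta<\delta_0$; this follows from the standard dependence on parameters of the fixed point of the uniform contraction of Proposition~\ref{prop:construct}, using the Lipschitz bounds \eqref{eq:N-contr-2}--\eqref{eq:N-contr-3} together with the a~priori estimate of Lemma~\ref{lem:nonhom} applied to $V(\sh{\bs v},\sh{\bs a})-V(\bs v,\bs a)$ (which is bounded, tends to $0$ locally in $t$, and whose polynomial growth in $t$ is absorbed by lowering $\delta$), exactly as in Step~2 of the proof of Lemma~\ref{lem:reg-lin}.

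Next I would construct the candidates. Put $b:=W''(H+\Psi)-V$. Using Lemma~\ref{lem:cross} with $F=W''$ (and $W''(\omega_n)=m_n^2$, so that $W''(\omega_{n_0})+\sum_k(W''(H_k)-W''(\omega_{n_{k-1}}))=V$ by \eqref{eq:V(t)}) together with \eqref{eq:cross-3}, and the fact that $\|\Psi\|_{\cH_{T_0,\delta_0}}$ can be made as small as we wish by enlarging $T_0$, one gets $b\in B(C^0(A,\cH_{T_0,\delta_0}))$. By Lemma~\ref{lem:rhs-smooth} (with parameter regularity $0$), the functions $-(W''(H+\Psi)-W''(H_k))\partial_{a_k}H_k$ and $-(W''(H+\Psi)-W''(H_k))\partial_{v_k}H_k$ lie in $C^0(A,\cH^0_{T_0,\delta})$ for $\delta$ small enough. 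Then Lemma~\ref{lem:reg-lin} with $l=0$, $s=1$ provides unique solutions $h_{a_k},h_{v_k}\in C(A,\cH_{T_0,\delta})$ of \eqref{eq:dag} and \eqref{eq:dvg} respectively; this simultaneously settles the last assertion of the lemma, once $h_{a_k}=\partial_{a_k}\Psi$ and $h_{v_k}=\partial_{v_k}\Psi$ have been proved.

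The identification is the main point. Fixing $(\bs v,\bs a)$ and writing $\Psi^\epsilon:=\Psi(\bs v,\bs a+\epsilon\bs e_k)$ and $P(\bs v,\bs a;g):=-W'(H(\bs v,\bs a)+g)+\sum_j W'(H_j(\bs v,\bs a))$ (so that $\partial_{a_k}P(\bs v,\bs a;\Psi)$ is precisely the right-hand side of \eqref{eq:dag}), I would set $r_\epsilon:=\Psi^\epsilon-\Psi-\epsilon h_{a_k}(\bs v,\bs a)$ and subtract the equations for $\Psi^\epsilon$, $\Psi$ and $\epsilon h_{a_k}$, obtaining
\begin{equation*}
\partial_t^2 r_\epsilon-\partial_x^2 r_\epsilon+W''(H+\Psi)r_\epsilon=\int_0^\epsilon\big(\partial_{a_k}P(\bs v,\bs a+t\bs e_k;\Psi^\epsilon)-\partial_{a_k}P(\bs v,\bs a;\Psi)\big)\ud t+\cQ_\epsilon,
\end{equation*}
where $\cQ_\epsilon$ is the quadratic remainder of the Taylor expansion of $P$ in $g$. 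The integral term is $o(\epsilon)$ in $\cH^0_{T_0,\delta}$: its integrand splits into a part carrying the increment $\Psi^\epsilon-\Psi$, which is $O(\epsilon)$ by the preliminary Lipschitz estimate and hence $O(\epsilon^2)$ after integrating over $[0,\epsilon]$, and a part carrying only the increment of the parameters, which tends to $0$ as $t\to0$ by continuity of the coefficients and hence contributes $o(\epsilon)$ after integration. The term $\cQ_\epsilon$ is $O(\|\Psi^\epsilon-\Psi\|_{\cH_{T_0,\delta_0}}^2)=O(\epsilon^2)$ by Lemma~\ref{lem:baby-cross} and the preliminary step. Writing $W''(H+\Psi)=V+b$, moving the small term $b\,r_\epsilon$ to the right, and applying the a~priori estimate of Lemma~\ref{lem:nonhom}, I would get $\|r_\epsilon\|_{\cH_{T_0,\delta}}=o(\epsilon)$, so $\partial_{a_k}\Psi$ exists and equals $h_{a_k}$. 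The argument for $\partial_{v_k}\Psi$ is the same, except that $\partial_{v_k}H_k$ and $\partial_{v_k}V$ grow linearly in $t$ on the relevant supports, so the corresponding polynomial factors must be absorbed by decreasing $\delta$ further, which is the source of the extra loss of decay rate in that case. This step essentially repeats Step~3 of the proof of Lemma~\ref{lem:reg-lin}, and it is the hard part: the delicate issue is producing the factor $o(\epsilon)$ rather than merely $O(\epsilon)$ while simultaneously controlling the polynomial-in-$t$ growth of the parameter-derivatives.

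It remains to prove the translation identities. Since every summand of $H$ depends on $(t,x)$ only through $\gamma_k(x-v_kt-a_k)$, one has $H(\bs v,\bs a;t,x-c)=H(\bs v,\bs a+c(1,\ldots,1);t,x)$ and $H(\bs v,\bs a;t-c,x)=H(\bs v,\bs a-c\bs v;t,x)$, and the space-time translates of $\Psi$ remain in the relevant weighted space, so the uniqueness part of Theorem~\ref{thm:construct} forces $\Psi(\bs v,\bs a;t,x-c)=\Psi(\bs v,\bs a+c(1,\ldots,1);t,x)$ and $\Psi(\bs v,\bs a;t-c,x)=\Psi(\bs v,\bs a-c\bs v;t,x)$. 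Differentiating these at $c=0$ — legitimate by the identification above and by $\Psi\in\cH^1$ — yields $\partial_x\Psi=-\sum_k\partial_{a_k}\Psi$ and $\partial_t\Psi=-\sum_k v_k\partial_{a_k}\Psi$, which finishes the argument.
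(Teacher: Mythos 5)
Your proposal is correct and follows essentially the same route as the paper: define the candidate derivatives as the solutions of \eqref{eq:dag}--\eqref{eq:dvg} given by Lemma~\ref{lem:reg-lin} with $b = W''(H+\Psi)-V$, show the difference-quotient remainder solves a wave equation whose right-hand side is $o(\epsilon)$ plus absorbable terms (using Lemmas~\ref{lem:baby-cross}, \ref{lem:cross}, \ref{lem:rhs-smooth} and the a~priori estimate of Lemma~\ref{lem:nonhom}), and deduce the $\partial_x\Psi$, $\partial_t\Psi$ identities from translation invariance and the uniqueness statement. The only deviations are organizational — you first upgrade the paper's continuity of $(\bs v,\bs a)\mapsto\Psi$ to a Lipschitz bound and package the parameter increment as an integral of $\partial_{a_k}P$, whereas the paper works with mere continuity and the explicit term-by-term rearrangement \eqref{eq:last} — and these do not change the substance of the argument.
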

\begin{proof}
We first prove that $\bs v$, $\Psi(\bs v, \bs a)$ is continuous with respect to $(\bs v, \bs a) \in A$
as a $\cH_{T_0, \delta}$-valued map for any $0 < \delta < \delta_0$.
Let $(\bs v, \bs a), (\sh{\bs v}, \sh{\bs a}) \in A$. We have
\begin{equation}
\begin{aligned}
&\|N(\bs v, \sh{\bs a}; \Psi(\sh{\bs v}, \sh{\bs a})) - N(\bs v, \bs a; \Psi(\bs v, \bs a))\|_{\cH_{T_0, \delta}} \\
&\leq \|N(\sh{\bs v}, \sh{\bs a}; \Psi(\sh{\bs v}, \sh{\bs a})) - N(\bs v, \sh{\bs a}; \Psi(\sh{\bs v}, \sh{\bs a}))\|_{\cH_{T_0, \delta}}\\
&+\|N(\bs v, \sh{\bs a}; \Psi(\sh{\bs v}, \sh{\bs a})) - N(\bs v, \bs a; \Psi(\sh{\bs v}, \sh{\bs a}))\|_{\cH_{T_0, \delta}} \\
&+ \|N(\bs v, \bs a; \Psi(\sh{\bs v}, \sh{\bs a})) - N(\bs v, \bs a; \Psi(\bs v, \bs a))\|_{\cH_{T_0, \delta}}.
\end{aligned}
\end{equation}
By \eqref{eq:N-contr-3}, the first term converges to $0$ as $(\sh{\bs v}, \sh{\bs a}) \to (\bs v, \bs a)$.
By \eqref{eq:N-contr-2}, the second term as well.
The third term is estimated using \eqref{eq:N-contr-1}, which yields the bound $c_0\|\Psi(\sh{\bs v}, \sh{\bs a}))
-\Psi(\bs v, \bs a))\|_{\cH_{T_0, \delta}}$ where $c_0$ can be made arbitrarily small.
By \eqref{eq:bh-E}, we also have
\begin{equation}
\lim_{(\sh{\bs v}, \sh{\bs a}) \to (\bs v, \bs a)}\big\|(V(\sh{\bs v}, \sh{\bs a})- V(\bs v, \bs a))\Psi(\sh{\bs v}, \sh{\bs a})\big\|_{\cH_{T_0, \delta}} = 0,
\end{equation}
so the right hand side of
\begin{equation}
\begin{aligned}
&(\partial_t^2 - \partial_x^2 + V(\bs v, \bs a))(\Psi(\sh{\bs v}, \sh{\bs a}) - \Psi(\bs v, \bs a)) \\
&\quad=N(\sh{\bs v}, \sh{\bs a}; \Psi(\sh{\bs v}, \sh{\bs a})) - N(\bs v, \bs a; \Psi(\bs v, \bs a)) - (V(\sh{\bs v}, \sh{\bs a})- V(\bs v, \bs a))\Psi(\sh{\bs v}, \sh{\bs a})
\end{aligned}
\end{equation}
is bounded in $\cH_{T_0, \delta}$ by $o(1) + c_0\|\sh\Psi - \Psi\|_{\cH_{T_0, \delta}}$, and Lemma~\ref{lem:nonhom} allows to absorb the second term, thus finishing the proof of $\lim_{(\sh{\bs v}, \sh{\bs a}) \to (\bs v, \bs a)}\|\Psi(\sh{\bs v},\sh{\bs a}) - \Psi(\bs v, \bs a)\|_{\cH_{T_0, \delta}} = 0$.

Let $k \in \{1, \ldots, K\}$ and define $\Theta: A \times (T_0, \infty) \times \bR \to \bR$
as the solution of
\begin{equation}
\label{eq:theta}
\partial_t^2 \Theta - \partial_x^2 \Theta + W''(H + \Psi)\Theta
= -(W''(H + \Psi) - W''(H_k))\partial_{a_k} H_k,
\end{equation}
given by Lemma~\ref{lem:reg-lin}.
Set $b := W''(H + \Psi) - V$. Since $\Psi \in C^0(A, \cH_{T_0, \delta})$, \eqref{eq:cross-3} yields
$W''(H + \Psi) - W''(H) \in C^0(A, \cH_{T_0, \delta})$ (upon diminishing $\delta$ if necessary).
By \eqref{eq:cross}, we also have $W''(H) - V \in C^0(A, \cH_{T_0, \delta})$,
thus $b \in B(C^0(A, \cH_{T_0, \delta}))$ (again modifying $\delta$ and $T_0$ if necessary).
By Lemma~\ref{lem:rhs-smooth} and Lemma~\ref{lem:reg-lin}, $\Theta \in C^0(A, \cH_{T_0, \delta})$.

Let $(\bs v, \bs a) \in A$.
We will verify that if $T_0$ is large enough (depending on $A$),
then $\partial_{a_k}\Psi(\bs v, \bs a; t, x)$ exists for all $(t, x) \in (T_0, \infty) \times \bR$
and $\partial_{a_k}\Psi(\bs v, \bs a) = \Theta(\bs v, \bs a)$.

To this end, for $0 < |\epsilon|$ small enough, consider $\Xi_\epsilon : (T_0, \infty) \times \bR \to \bR$
given by
\begin{equation}
\Xi_\epsilon(t, x) := \Psi(\bs v, \bs a + \epsilon \bs e_k, t, x) - \Psi(\bs v, \bs a, t, x) -
\epsilon\,\Theta(\bs v, \bs a, t, x),
\end{equation}
where $\bs e_k := (0, \ldots, 1, \ldots, 0)$ is the $k$-th element of the standard basis of $\bR^K$.
Our goal is to verify that
\begin{equation}
\label{eq:Xi-conv}
\lim_{\epsilon \to 0}|\epsilon|^{-1}\|\Xi_\epsilon\|_{\cH_{T_0, \delta}} = 0.
\end{equation}
In the computation below, we denote $\sh{\bs a} := \bs a + \epsilon \bs e_k$ and $\sh \Psi(\bs v, \bs a, t, x) := \Psi(\bs v, \sh{\bs a}, t, x)$. We consider $(\bs v, \bs a)$ as being fixed, but below we omit the argument $(\bs v, \bs a)$ for the sake of brevity, thus we write $H$ instead of $H(\bs v, \bs a)$ etc.
We observe that $\Xi_\epsilon$ solves the equation
\begin{equation}
\begin{aligned}
&(\partial_t^2 - \partial_x^2 + V)\Xi_\epsilon = (\partial_t^2 - \partial_x^2 + V)(\sh \Psi - \Psi - \epsilon \Theta)  \\
&= (\partial_t^2 - \partial_x^2)\sh\Psi - (\partial_t^2 - \partial_x^2)\Psi
 + V(\sh\Psi - \Psi) 
-\epsilon(\partial_t^2 - \partial_x^2 - V)\Theta \\
&= {-}W'(\sh H + \sh\Psi) +W'(H + \Psi) + \sum_{j=1}^K (W'(\sh H_j) - W'(H_j))+ V(\sh\Psi - \Psi) \\
&\qquad+ \epsilon(W''(H+\Psi) - V)\Theta + \epsilon(W''(H+\Psi) - W''(H_k))\partial_{a_k}H_k,
\end{aligned}
\end{equation}
where in the last equality we use \eqref{eq:theta}.

We claim that the right hand side is bounded in $\cH_{T_0, \delta}$ by $o(|\epsilon|) + c_0\|\Xi_\epsilon\|_{\cH_{T_0, \delta}}$, with $c_0$ small (upon adjusting $T_0$ and $\delta$).
Lemma~\ref{lem:nonhom} will then allow to conclude.

We further rearrange the terms of the right hand side above as follows, using the fact that $\sh H_j = H_j$ for $j \neq k$:
\begin{equation}
\label{eq:last}
\begin{aligned}
&-\big(W'(H+\sh\Psi) - W'(H+\Psi) - W''(H+\Psi)(\sh\Psi - \Psi)\big) \\
&- \big(W'(\sh H + \sh \Psi) - W'(H+\sh\Psi)\big) + \big(W'(\sh H_k) - W'(H_k)\big) \\
&+ \epsilon(W''(H+\sh\Psi) - W''(H_k))\partial_{a_k}H_k \\
&- \epsilon(W''(H + \sh\Psi) - W''(H + \Psi))\partial_{a_k}H_k \\
&- \big(W''(H+\Psi) - V\big)(\sh \Psi - \Psi - \epsilon\Theta).
\end{aligned}
\end{equation}
Consider the last line, noting that $\sh \Psi - \Psi - \epsilon\Theta = \Xi_\epsilon$. 
As in the proof of Lemma~\ref{lem:N-contr}, we write
\begin{equation}
{-}W''(H+\Psi) + V = \big({-}W''(H+\Psi) + W''(H)\big) +
\big({-}W''(H) + V\big).
\end{equation}
Using Lemmas~\ref{lem:baby-cross} and \ref{lem:cross},
we obtain a bound in $\cH_{T_0, \delta_3}$ for some $\delta_3 > 0$.
Enlarging $T_0$ if necessary and using \eqref{eq:gh-E}, we conclude that
\begin{equation}
\|\big({-}W''(H+\Psi) + V\big)\Xi_\epsilon\|_{\cH_{T_0, \delta}}
\leq c_0\|\Xi_\epsilon\|_{\cH_{T_0, \delta}}.
\end{equation}
Regarding the fourth line, using \eqref{eq:taylor-1} together with the fact that $\lim_{\epsilon \to 0}\|\sh \Psi - \Psi\|_{\cH_{T_0, \delta}} = 0$, we obtain the bound $o(\epsilon)$ in $\cH_{T_0, \delta}$.

Consider now the first line of \eqref{eq:last}. By \eqref{eq:taylor-2}, we have
\begin{equation}
\begin{aligned}
&\|W'(H+\sh\Psi) - W'(H+\Psi) - W''(H+\Psi)(\sh\Psi - \Psi)\|_{\cH_{T_0, \delta}} \lesssim \|\sh\Psi - \Psi\|_{\cH_{T_0, \delta}}^2 \\
&\qquad\ll \|\sh\Psi - \Psi\|_{\cH_{T_0, \delta}} = \|\epsilon\Theta + \Xi_\epsilon\|_{\cH_{T_0, \delta}} \lesssim \epsilon + \|\Xi_\epsilon\|_{\cH_{T_0, \delta}}.
\end{aligned}
\end{equation}

Using the Fundamental Theorem and $\sh H - H = \sh H_k - H_k$, we rewrite the second line of \eqref{eq:last} as
\begin{equation}
\label{eq:last-2}
\begin{aligned}
&(\sh H_k - H_k)\int_0^1\big({-}W''(H + \sh\Psi + \sigma(\sh H_k - H_k)) + W''(H_k + \sigma(\sh H_k - H_k))\big)\ud\sigma \\
&\quad = {-}(\sh H_k - H_k)(H - H_k + \sh\Psi)\int_0^1\int_0^1 W'''(H_k + \tau(H - H_k + \sh\Psi) + \sigma(\sh H_k - H_k))\ud \tau\ud\sigma.
\end{aligned}
\end{equation}
Since
\begin{equation}
\|H_k(\bs v, \sh{\bs a}) - H_k(\bs v, \bs a)\|_X = \Big\|\epsilon\int_0^1 \partial_{a_k}H_k(\bs v, \bs a + \sigma\epsilon\bs e_k)\ud \sigma\Big\|_X \lesssim \epsilon,
\end{equation}
\eqref{eq:bh-E} yields $\|(\sh H - H)\sh\Psi\|_{\cH_{T_0, \delta}} \lesssim \epsilon$.
Using the exponential decay of $\partial_{a_k}H_k$ and its derivatives for $|x - a_k - v_k t| \gg 1$, we also obtain
$\|(\sh H_k - H_k)(H - H_k)\|_{\cH_{T_0, \delta}} \lesssim \epsilon$.
Moreover, since $\|\sh H_k - H_k\|_X \lesssim \epsilon$, another application of the Fundamental Theorem gives
\begin{equation}
\|W'''(H_k + \tau(H - H_k + \sh\Psi) + \sigma(\sh H_k - H_k)) - W'''(H_k + \tau(H - H_k + \sh\Psi))\|_X \lesssim \epsilon.
\end{equation}
Thus, up to terms bounded in  $\cH_{T_0, \delta}$ by $O(\epsilon^2)$, \eqref{eq:last-2} is the same as
\begin{equation}
{-}(\sh H_k - H_k)(H - H_k + \sh\Psi)\int_0^1 W'''(H_k + \tau(H - H_k + \sh\Psi))\ud \tau.
\end{equation}
Adding the third line of \eqref{eq:last}, we obtain
\begin{equation}
{-}(\sh H_k - H_k - \epsilon \partial_{a_k}H_k)(H - H_k + \sh\Psi)\int_0^1 W'''(H_k + \tau(H - H_k + \sh\Psi))\ud \tau.
\end{equation}
Since
\begin{equation}
\|\sh H_k - H_k - \epsilon\partial_{a_k}H_k\|_X = \Big\|\epsilon^2\int_0^1 (1-\sigma)\partial_{a_k}^2 H_k(\bs v, \bs a + \sigma\epsilon\bs e_k)\ud \sigma\Big\|_X \lesssim \epsilon^2,
\end{equation}
\eqref{eq:bh-E} yields $\|(\sh H_k - H_k-\epsilon\partial_{a_k}H_k)\sh\Psi\|_{\cH_{T_0, \delta}} \lesssim \epsilon^2$.
Using the exponential decay of $\partial_{a_k}^2H_k$ and its derivatives for $|x - a_k - v_k t| \gg 1$, we also obtain
$\|(\sh H_k - H_k - \epsilon\partial_{a_k}H_k)(H - H_k)\|_{\cH_{T_0, \delta}} \lesssim \epsilon^2$.

This finished the proof of the required estimate of \eqref{eq:last}.

The proofs of existence of the other partial derivatives are similar, so we skip them.
The formulas for $\partial_x \Psi$ and $\partial_t \Psi$ follow from the fact that
$\partial_x H = {-}\sum_{k=1}^K \partial_{a_k}H$ and $\partial_t H = {-}\sum_{k=1}^K v_k\partial_{a_k}H$.
\end{proof}
\begin{proof}[Proof of Theorem~\ref{thm:smooth}]
Induction with respect to $s$ and $l$.
The induction step follows from Lemma~\ref{lem:reg-lin} and Lemma~\ref{lem:smooth}.
\end{proof}

\begin{proof}[Proof of Theorem~\ref{thm:lorentz}]
\label{proof:lorentz}
Fix $(\bs v, \bs a) \in S^{(K)} \times \bR^K$.
We should prove that, applying a space-time translation or a Lorentz transform to $H(\bs v, \bs a) + \Psi(\bs v, \bs a)$
yields a function converging exponentially in time to $H(\bs v', \bs a')$, where $(\bs v', \bs a')$
is given by \eqref{eq:prime-formula}.
The uniqueness part of Theorem~\ref{thm:construct} will then yield the conclusion.

In the case of space-time translations, the claim is clear, so let us consider a Lorentz boost
with velocity $v$, $(t, x) = (\gamma (t' + vx'), \gamma(x' + vt'))$, $(t', x') = (\gamma(t - vx), \gamma(x-vt))$.
Without loss of generality, we can assume $v \in (0, 1)$.
Let $g := \Psi(\bs v, \bs a)$, $\phi := H(\bs v, \bs a) + g$, $\wt \phi(t', x') := \phi(t, x)$, $h := \wt\phi - H(\bs v', \bs a')$.
Recall that $g$ is defined for all $(t, x) \in \bR^2$ thanks to the global well-posedness.
Since $H(\bs v', \bs a'; t', x') = H(\bs v, \bs a; t, x)$, we have $h(t', x') = g(t, x)$. We wish to prove that
\begin{equation}
\int_\bR \big((\partial_{t'}h(t', x'))^2 + (\partial_{x'}h(t', x'))^2 + h(t', x')^2\big)\ud x' \lesssim \eee^{-\delta t'}
\end{equation}
for some $\delta > 0$. Equivalently, observing that $\gamma(t' + vx') = \gamma^{-1}t'+ vx$, where $\gamma := (1-v^2)^{-1/2}$,
\begin{equation}
\label{eq:lorentz-exp}
\int_\bR \big((\partial_{t}g(\gamma^{-1}t' + vx, x))^2 + (\partial_{x}g(\gamma^{-1}t' + vx, x))^2 + g(\gamma^{-1}t' + vx, x)^2\big)\ud x \lesssim \eee^{-\delta t'}.
\end{equation}

We consider separately the regions $x \geq -\frac{t'}{2\gamma v}$ and $x \leq -\frac{t'}{2 \gamma v}$.
In the first region, it suffices to use the exponential in time decay of $g$ and its derivatives,
uniformly in $x$, which follows from Theorem~\ref{thm:smooth} and the embedding $H^1(\bR) \subset L^\infty(\bR)$.

Consider the second region. Set $(t_0, x_0) := (t'/(2\gamma), -t'/(2v\gamma))$
and let $\Delta$ be the cone in the $(t, x)$ plane $\bR\times \bR$ with vertex at $(t_0, x_0)$,
delimited by the half-lines $\{(t_0, x): x \leq x_0\}$ and $\{(t_0 + v(x-x_0), x): x \leq x_0\}$.
An elementary computation shows that $(t, x) \in \Delta$ implies $x - v_k t - a_k \lesssim x$,
thus
\begin{equation}
|H(\bs v, \bs a; t, x) - \omega_{n_0}| + |\partial_t H(\bs v, \bs a; t, x)| + |\partial_x H(\bs v, \bs a; t, x)| \lesssim \eee^{-\delta x},
\end{equation}
uniformly for $(t, x) \in \Delta$, with a constant independent of $t_0$.
In particular, proving that
\begin{equation}
\label{eq:lorentz-exp-2}
\int_{-\infty}^{x_0} \big((\partial_{t}g(\gamma^{-1}t' + vx, x))^2 + (\partial_{x}g(\gamma^{-1}t' + vx, x))^2 + g(\gamma^{-1}t' + vx, x)^2\big)\ud x \lesssim \eee^{-\delta t'}.
\end{equation}
is equivalent, since $W(\omega_{n_0} + g) \simeq g^2$ for $|g|$ small, to verifying that
\begin{equation}
\label{eq:lorentz-exp-3}
\int_{-\infty}^{x_0} \Big(\frac 12 (\partial_{t}\phi(\gamma^{-1}t' + vx, x))^2 + \frac 12(\partial_{x}\phi(\gamma^{-1}t' + vx, x))^2 + W(\phi(\gamma^{-1}t' + vx, x))\Big)\ud x \lesssim \eee^{-\delta t'}.
\end{equation}
For the same reason, the exponential decay in time of $g$ implies
\begin{equation}
\label{eq:lorentz-exp-4}
\int_{-\infty}^{x_0} \Big(\frac 12(\partial_{t}\phi(t_0, x))^2 + \frac 12(\partial_{x}\phi(t_0, x))^2 + W(\phi(t_0, x))\Big)\ud x \lesssim \eee^{-\delta t'}.
\end{equation}
By the Green's theorem applied for the smooth vector field
$(\frac 12 (\partial_t \phi)^2 + \frac 12 (\partial_x \phi)^2 + W(\phi), -\partial_t \phi\partial_x \phi)$,
in the region $\Delta$, \eqref{eq:lorentz-exp-4} implies \eqref{eq:lorentz-exp-3}.
\end{proof}

\providecommand{\noopsort}[1]{}

\end{document}